\def\Dy#1{\Frac{\partial #1}{\partial y}}
\def\Dy_1y_1#1{\Frac{\partial^2 #1}{\partial y_1^2}}
\def\reff#1{{\rm(\ref{#1})}}
\def\be{\begin{eqnarray}}
\def\ee{\end{eqnarray}}
\def\b*{\begin{eqnarray*}}
\def\e*{\end{eqnarray*}}
\newtheorem{Theorem}{Theorem}[part]
\newtheorem{Proposition}{Proposition}[part]
\newtheorem{Lemma}{Lemma}[part]
\newtheorem{Remark}{Remark}[part]
\makeatletter \@addtoreset{equation}{section}
\def \Sum{\displaystyle\sum}
\def \Frac{\displaystyle\frac}
\def \Sup{\displaystyle\sup}
\def \Lim{\displaystyle\lim}
\def \be{\begin{eqnarray}}
\def \ee{\end{eqnarray}}
\def \b*{\begin{eqnarray*}}
\def \e*{\end{eqnarray*}}
\def \E{\mathbb{E}}
\def \R{\mathbb{R}}
\def \[{[\,\!\![}
\def \]{]\,\!\!]}
\def \1{{\bf 1}}
\def \ep{\hbox{ }\hfill$\Box$}
\def\reff#1{{\rm(\ref{#1})}}
\def\Bc{{\cal B}}
\def\Fc{{\cal F}}
\newtheorem{thm}{Theorem}[section]
\newtheorem{df}[thm]{D\'efinition}
\begin{document}
\begin{frontmatter}
\title{Euler time discretization of Backward Doubly  SDEs and Application to Semilinear SPDEs }
\date{}
\runtitle{}

\author{\fnms{Achref}
 \snm{BACHOUCH}\corref{}\ead[label=e1]{achref.bachouch@hotmail.com}}
\address{Humboldt University of Berlin\\
Institute of Mathematics
\\
Stochastic Group\\
\printead{e1}
}

\author{\fnms{Mohamed Anis}
 \snm{BEN LASMAR}\corref{}\ead[label=e2]{mohamedanis.benlasmar@lamsin.rnu.tn}}
\address{University of Tunis El Manar\\
 Laboratoire de Mod\'elisation
 Math\'ematique et Num\'erique\\
 dans les Sciences  de l'Ing\'enieur, ENIT\\
 \printead{e2}
 }

\author{\fnms{Anis}
 \snm{MATOUSSI}\corref{}\ead[label=e3]{anis.matoussi@univ-lemans.fr}}
 \thankstext{t3}{Research partly supported by the Chair {\it Financial Risks} of the {\it Risk Foundation} sponsored by Soci\'et\'e G\'en\'erale, the Chair {\it Derivatives of the Future} sponsored by the {F\'ed\'eration Bancaire Fran\c{c}aise}, and the Chair {\it Finance and Sustainable Development} sponsored by EDF and Calyon }
\address{
 Universit\'e du Maine \\
 Institut du Risque et de l'Assurance\\
Laboratoire Manceau de Math\'ematiques\\\printead{e3}
 }

\author{\fnms{Mohamed}
 \snm{MNIF}\corref{}\ead[label=e4]{mohamed.mnif@enit.rnu.tn}}
\thankstext{t4}{This work was partially supported by the research project MATPYL
of the F\'ed\'eration de Math\'ematiques des Pays de la Loire }
\address{
University of Tunis El Manar \\
Laboratoire de Mod\'elisation
 Math\'ematique et Num\'erique\\
 dans les Sciences  de l'Ing\'enieur, ENIT\\\printead{e4}}

\runauthor{A. Bachouch, M.A. Ben Lasmar,  A. Matoussi, M. Mnif}

\vspace{3mm}
\begin{abstract}
This paper investigates a numerical probabilistic method for the solution of some
semilinear stochastic partial differential equations (SPDEs in short).
The numerical scheme is based on discrete time approximation for solutions of systems
of decoupled forward-backward doubly stochastic differential equations. Under standard assumptions
on the parameters, the convergence and the rate of convergence of the numerical scheme is proven. The proof is based on a
generalization of the result on the path regularity of the backward equation.
\end{abstract}

\vspace{3mm}

\noindent {Key words:} Backward Doubly Stochastic Differential Equations, Forward-Backward System, Stochastic PDE, Malliavin calculus, Euler scheme, Monte Carlo method.


\noindent {\bf MSC Classification (2000):}
60H10, 65C30.
\end{frontmatter}

\section{Introduction}
\label{introduction:section}
Stochastic partial differential equations combine the features of partial differential equations
and It\^o equations.
Such equations play important roles in many applied fields such as the filtering of partially observable
diffusion processes, genetic population and other areas. For concrete examples, we send the reader to Pardoux \cite{pardoux}, Krylov and Rozovskii \cite{Kry-Rozv}
and Flandoli \cite{Flandoli}. We study the following SPDE for a predictable random
field $u_t\left( x\right)=u\left( t,x\right)$, satisfying:
{
\begin{equation}
\begin{split}
\label{SPDE}   du_t (x) +  \big(Lu_t (x)
 \;   + &  f(t,x,u_t (x),\nabla u_t \sigma  (x)) \big) \, dt  +  g(t,x,u_t(x),\nabla u_t \sigma (x))\cdot \overleftarrow{dB}_t  = 0, \,\
\end{split}
\end{equation}}
over the time interval $[0,T]$, with a given final condition $u_T = \Phi$ and non-linear deterministic  coefficients $f$ and $g$.
$ L u=
\big( Lu_1, \cdots,  Lu_k \big)$ is a second order differential operator and $\sigma$ is the diffusion coefficient.
The differential term with $\overleftarrow{dB}_t$
refers to the backward stochastic integral with respect to a $l$-dimensional Brownian motion on
$\big(\Omega, \mathcal{F},\mathbb{P}, (B_t)_{t\geq 0} \big)$. The backward stochastic integral  in the SPDE  is used because we will
 employ the framework  of Backward Doubly Stochastic Differential Equation (BDSDE in short) introduced  first by Pardoux and Peng \cite{par94}.
It gives a probabilistic representation for the classical solution $ u_t (x)$  of the SPDE \eqref{SPDE}
(written in the integral form) in terms of the following class of BDSDE's:
\begin{equation}\label{BDSDE1}
Y_{s}^{t,x}= \Phi(X_{T}^{t,x}) +\int_{s}^{T}f(r,X_{r}^{t,x},Y_{r}^{t,x},Z_{r}^{t,x})dr
+ \int_{s}^{T}g(r,X_{r}^{t,x},Y_{r}^{t,x},Z_{r}^{t,x})\overleftarrow{dB_{r}} - \int_{s}^{T}Z_{r}^{t,x}dW_{r},
\end{equation}
where  $(X_{s}^{t,x})_{ t\leq s\leq T}$ is a diffusion process  starting from $x$ at time $t$ driven by the finite dimensional
Brownian motion $(W_t)_{t\geq 0}$ and  with infinitesimal generator $L$.
More precisely, under  some regularity assumptions on the final condition $\Phi$ and coefficients $f$ and $g$ ,
they  proved that $u_t (x) =Y_{t}^{t,x}$ and $\nabla u_t  \sigma (x)  =Z_t^{t,x}$, $\forall (t,x)\in [0,T]\times\mathbb{R}^{d}$.
Then, Bally and Matoussi \cite{BMat}  (see also \cite{MS02} ) showed that the same representation remains true in the case when
the final condition (respectively the coefficients  $f$ and $g$) is only measurable  in $x$   (resp.  are  jointly measurable in
 $(t,x) $ and Lispchitz in $u$ and  $\nabla u$). In this paper,   weak Sobolev solution of the equation \eqref{SPDE} was
  considered,  and the approach was based on stochastic flow techniques (see also \cite{K94a,K94b}).
Moreover, their results were generalized in \cite{MS02} to the case of a larger class of SPDE's \eqref{SPDE} driven  by a Kunita-It\^o
 non-linear noise (see \cite{K94a, K94b, Kunitabook} for more details).  In particular, the Kunita-It\^o non-linear noise covers
 a class of  infinite dimensional time-space white-colored noise (see \cite{gyo}, \cite{Rochner}, \cite{Kloeden}).
The explicit resolution of  semi-linear SPDEs is not generally possible, it is then necessary to resort to numerical methods.

The first approach used to solve numerically nonlinear SPDEs is analytic. It is based on time-space discretization  of the SPDEs.
 The discretization in space can be achieved by different methods such as finite differences, finite elements, spectral Galerkin methods.
Most numerical works on SPDEs concentrated on the Euler finite-difference scheme. Gyongi and Nualart  \cite{gyo1} proved that
 these schemes converge, and Gyongy \cite{gyo2} determined the order of convergence.
 Very interesting results were obtained by Gyongy and Krylov \cite{gyo} considering a  symmetric finite difference scheme for a class
  of linear SPDE driven by infinite dimensional Brownian motion. They proved that the approximation error is proportional to
   $\widehat{h}^2$
 where $\widehat{h}$ is the discretization step in space and by the Richardson acceleration method they even got the error
 proportional to $\widehat{h}^4$.
  Walsh \cite{Walsh} investigated schemes based on the finite elements methods. He studied the rate of convergence of these schemes for parabolic SPDEs, including the Forward and Backward Euler and the Crank-Nicholson schemes. He found a substantially similar rate of convergence to those found for finite difference schemes.\\
   The spectral Galerkin approximation was used by Jentzen and Kloeden \cite{Kloeden}.
  They based their method on Taylor expansions derived from the solution of the SPDE, under some regularity conditions.
Lototsky, Mikulevicius and Rozovskii \cite{lot} used the spectral approach for the numerical estimation of the conditional distribution solution of a linear SPDE known as the Zakai equation.
Further developments on spectral methods can be found in Lototsky \cite{lot1}.

The other alternative for resolving numerically SPDEs is the probabilistic approach by using Monte Carlo methods.
These methods are tractable especially when the dimension of the state process is large unlike the finite difference method.
  Furthermore, their parallel nature provides another advantage to the probabilistic approach: each processor of a parallel computer
  can be assigned the task of making a random trial and doing the calculus independently.
  Milstein and Tretyakov \cite{Milstein} solved a linear Stochastic Partial Differential Equation by using the characteristics method
 (the averaging over the characteristic formula). They proposed a numerical scheme based on the Monte Carlo technique.
  Moreover, they constructed Layer methods for
  linear and semilinear SPDEs. Picard \cite{Picard} considered a filtering problem where the observation was a diffusion
   function corrupted by an independent white noise. He estimated the error caused by a discretization of the time interval.
He obtained some approximations of the optimal filter that can be computed with Monte-Carlo methods. Crisan \cite{crisan} studied
a particle approximation for a class of nonlinear stochastic partial differential equations.

Another probabilistic method to solve a semilinear SPDE is based on the associated BDSDE. It requires weaker assumptions on the SPDE's coefficients.
In the deterministic PDE's case i.e. $g\equiv 0$, the numerical approximation of the BSDE has already been
studied in the literature by Bally \cite{B97}, Zhang \cite{zha}, Bouchard and Touzi \cite{bou1}, Gobet, Lemor and Warin\cite{gob} and Bouchard and Elie
\cite{bou2} among others. Zhang \cite{zha} proposed a discrete-time numerical approximation, by step processes,
for a class of decoupled FBSDEs with possible path-dependent terminal values. He proved a $L^2$-type regularity of
the BSDE's solution, the convergence of his scheme and he derived its rate of convergence. Bouchard and Touzi \cite{bou1} suggested a similar numerical scheme for decoupled FBSDEs. The conditional expectations involved in their discretization scheme were computed by using the kernel regression estimation. Therefore, they used the Malliavin approach and the Monte carlo method for its computation. Crisan, Manolarakis and Touzi \cite{cri} proposed an improvement on the Malliavin weights. Gobet, Lemor and Warin in \cite{gob} proposed an explicit numerical scheme.
 In the stochastic PDEs case, i.e. $g\neq 0$, Aman \cite{Aman} and Aboura \cite{Aboura2} treated the particular case when
  $g$ does not depend on the control variable $z$. Aman \cite{Aman} proposed a numerical scheme following
the idea used by Bouchard and Touzi  \cite{bou1} and obtained a convergence of order $h$ of the square of the $L^2$- error ($h$ is the discretization step in time).  Aboura \cite{Aboura2} studied the same numerical scheme under the same kind of hypothesis, but following Gobet et al. \cite{gobet1}. He obtained a convergence of order $h$  in time and used the regression Monte Carlo method to implement his scheme,
 as in  \cite{gobet1}.

In this paper, we extend the approach of Bouchard-Touzi-Zhang in the general case when $g$  also depends  on the control  variable $z$. We emphasize that this generalization is not obvious because of the  strong  impact of the backward stochastic integral term on the numerical approximation scheme. It is known that in the associated Stochastic PDE's \eqref{SPDE},  the term $ g (u, \nabla u) $ leads to a second order perturbation type which explains the contraction condition assumed on $g$ with respect to the variable $z$ (see \cite{par94}, \cite{NP88}).
This scheme is implicit in $Y$ and explicit in $Z$. The convergence of our time-discretization scheme  is proven and  the rate of convergence given.
The square of the $L^2$- error has an upper bound in the order of  the discretization step in time.
As a consequence, a scheme for the weak solution of the associated  semilinear SPDE is obtained and a rate of convergence result for the later weak solution given. Then, we propose
a fully implementable numerical scheme based on iterative regression functions which are approximated by projections on vector space of functions with coefficients evaluated using Monte Carlo simulations. Finally,  some numerical tests are presented.  Compared to the deterministic numerical method developed by Gyongy and Krylov \cite{gyo},
the probabilistic approach could tackle the semilinear SPDE which could be degenerate and needs fewer regularity conditions on the coefficients than the finite difference scheme.
However, the rate of convergence obtained is clearly slower than the rate obtained by finite difference and finite element schemes,
but our method is available in high dimension.
To simplify the numerical implementation which is based on least-squares method an example is given  in the one dimensional case. For the multidimensional case,
we refer to  Gobet and Lemor \cite{gobet-Lemor} who
 studied the numerical resolution of BSDEs and treated numerical results up to the dimension 10.

The paper is organized as follows: in section 2,  preliminaries and assumptions  are introduced and  the approximation scheme for
 the BDSDEs  \eqref{BDSDE1} is described.  In section 3,  an upper bound result for the time discretization error is shown.
In section 4, we give a Malliavin regularity result for the solution of our Forward-Backward Doubly
SDEs. Then,  we show an  $L^2$-regularity result for the $Z$-component of the solution of the
BDSDEs \eqref{BDSDE1} which is crucial to obtain the rate of convergence
 of our numerical scheme.
Section 5 is devoted to the numerical scheme of the SPDE's weak solution. In section 6,
the convergence of this scheme  is tested statistically by using a path dependent algorithm based on the regression Monte Carlo Method. Finally,  some technical results  are given in the Appendix.

\section{Preliminaries and notations}
\label{preliminary:section}
\setcounter{equation}{0}
\setcounter{Assumption}{0}
\setcounter{Example}{0}
\setcounter{Theorem}{0}
\setcounter{Proposition}{0}
\setcounter{Corollary}{0}
\setcounter{Lemma}{0}
\setcounter{Definition}{0}
\setcounter{Remark}{0}
\subsection{ Forward Backward Doubly Stochastic Differential Equation}
\label{FBDSDE:subsection}
Let $\{W_{s}, 0\leq s\leq T\}$
and $\{B_{s}, 0\leq s\leq T\}$ be two mutually independent standard Brownian motion processes, with
values respectively in $\mathbb{R}^{d}$ and in $\mathbb{R}^{l}$ where $T>0$ is a fixed horizon time, defined on the probability space $(\Omega,\mathcal{F},P)$.\\
We shall work in the product space $\Omega:=\Omega_W\times \Omega_B$, where $\Omega_W$ is the set of continuous functions
from $[0,T]$ into $\mathbb{R}^{d}$ and  $\Omega_B$ is the set of continuous functions
from $[0,T]$ into $\mathbb{R}^{l}$.\\
For $t\in[0,T]$ and $s\in [t, T]$,
\begin{eqnarray*}
\mathcal{F}_{s}^{t}:= \mathcal{F}_{t, s}^{W}\vee \mathcal{F}_{s,T}^{B}
\end{eqnarray*}
is defined, where $\mathcal{F}_{t,s}^{W}= \sigma\{W_{r}-W_{t}, t\leq r\leq s\},
\textrm{ and } \mathcal{F}_{s,T}^{B}= \sigma\{B_{r}-B_{s}, s\leq r\leq T \}$.
We set $\mathcal{F}^W=\mathcal{F}_{0,T}^{W}$, $\mathcal{F}^B=\mathcal{F}_{0,T}^{B} $ and $\mathcal{F}= \mathcal{F}^{W}\vee \mathcal{F}^{B}$.\\
We define the probability measures $P_W$ on $(\Omega_W,\mathcal{F}^{W})$ and $P_B$ on $(\Omega_B,\mathcal{F}^{B})$.
Then, we define the probability measure $P:=P_W\otimes P_B$ on $(\Omega,\mathcal{F}^W\times \mathcal{F}^B)$.
Without loss of generality,  it is assumed that $\mathcal{F}^W$ and  $\mathcal{F}^B$ are complete.\\
Note that the collection $\{\mathcal{F}_{s}^{t}, s\in [t,T]\}$ is neither increasing nor decreasing, and it does not constitute a filtration.
To alleviate  notations, we denote $\mathcal{F}_s:=\mathcal{F}_{s}^{0}$.\\

The following spaces are introduced:\\
$\bullet$ $C_{b}^{k}(\mathbb{R}^{p},\mathbb{R}^{q})$ (respectively $C_{b}^{\infty}(\mathbb{R}^{p},\mathbb{R}^{q})$) denotes the set
of functions of class $C^k$ from $\mathbb{R}^{p}$ to $\R^{q}$ whose partial derivatives of order
less or equal to $k$ are bounded (respectively the set of functions of class $C^\infty$ from $\mathbb{R}^{p}$ to $\R^{q}$ whose partial derivatives are bounded).\\
$\bullet$ $C_{b}^{k}([0,T] \times \mathbb{R}^{p},\mathbb{R}^{q})$ denotes the set of functions of class $C^k$ from $[0,T]  \times \mathbb{R}^{p}$ to $\mathbb{R}^q$ whose partial derivatives of order less or equal to $k$ are bounded.\\
$\bullet$ $L^2(\Omega,\mathcal{F}_T,P;\mathbb{R}^k)$ denotes the set of $\mathcal{F}_T$-measurable square integrable
random variables with values in $\mathbb{R}^k$.\\
For any $m\in\mathbb{N}$ and $t\in [0,T] $,  the following notations are introduced:\\
$\bullet$ $\mathbb{H}^{2}_m([t,T])$
denotes the set of (classes of $dP\times dt$ a.e. equal) $\mathbb{R}^{m}$-valued jointly measurable processes $\{\psi_{u}; u\in [t,T]\}$ satisfying:\\
(i) $||\psi||^2_{\mathbb{H}^{2}_m([t, T])}:=E[\int_{t}^{T}|\psi_{u}|^{2}du]<\infty$,\\
(ii) $\psi_{u} $ is $ \mathcal{F}_{u}$-measurable, for a.e. $u\in [t,T]$.\\
$\bullet$ $\mathbb{S}^{2}_m([t, T])$ denotes similarly the set of $\mathbb{R}^{m}$-valued continuous processes satisfying:\\
(i) $||\psi||^2_{\mathbb{S}^{2}_m([t, T])}:=E[\sup_{t\leq u\leq T}|\psi_{u}|^{2}]<\infty$,\\
(ii) $\psi_{u} $ is $ \mathcal{F}_{u}$-measurable, for any $u\in [t,T]$.\\
$\bullet$  $\mathbb{S}$ the set of random variables $F$ of the form: $F=\hat f(W(h_{1}),\ldots,W(h_{m_1}),B(k_{1}),\ldots,B(k_{m_{2}}))$ \\
with $\hat f\in C_{b}^{\infty}(\mathbb{R}^{m_1+m_2},\R)$,
$h_{1},\ldots,h_{m_1}\in L^{2}([t,T],\mathbb{R}^{d}),k_{1},\ldots,k_{m_2}\in L^{2}([t,T],\mathbb{R}^{l})$, where
\begin{eqnarray*}
W(h_{i}):=\int_{t}^{T}h_{i}(s)dW_{s},\quad B(k_{j}):=\int_{t}^{T}k_{j}(s)\overleftarrow{dB_{s}}.
\end{eqnarray*}
For any random variable $F \in \mathbb{S}$,  its Malliavin derivative $(D_{s}F)_s$ is defined  with respect to the Brownian motion
 $W$ as follows
\begin{eqnarray*}
 D_{s}F := \sum_{i=1}^{m_1}\nabla_{i} \hat f \bigg(W(h_{1}),\ldots,W(h_{m_1});B(k_{1}),\ldots,B(k_{m_{2}})\bigg)h_{i}(s),
\end{eqnarray*}
where $\nabla_{i}\hat f$ is the derivative of $\hat f$ with respect to its i-th argument.\\
We define a norm on $\mathbb{S}$  by:
\begin{eqnarray*}
\|F\|_{1,2}:=\big\{E[F^{2}]+E\big[\int_{t}^{T}|D_{s}F|^{2}ds\big] \big\}^{\frac{1}{2}}.
\end{eqnarray*}
$\bullet$ $\mathbb{D}^{1,2}\triangleq \overline{{\mathbb S}}^{\|.\|_{1,2}}$ is then a Sobolev space.\\
$\bullet$ $\mathcal{S}_k^2([t,T],\mathbb{D}^{1,2})$ is the set of processes $Y=(Y_u,t\leq u\leq T)$
such that $Y\in \mathbb{S}_k^2([t,T])$, $Y^i_u\in \mathbb{D}^{1,2}$, $1\leq i\leq k$, $t\leq u\leq T$ and
$$\|Y\|_{1,2}:=\{E[\int_{t}^{T}|Y_u|^2du]+E[\int_{t}^{T}\int_{t}^{T}||D_{\theta}Y_u||^2du d\theta]\}^{\frac{1}{2}}<\infty.$$
$\bullet$ $\mathcal{M}_{k\times d}^2([t,T],\mathbb{D}^{1,2})$ is the set of processes $Z=(Z_u,t\leq u\leq T)$
such that $Z\in \mathbb{H}_{k\times d}^2([t,T])$, $Z^{i,j}_u\in \mathbb{D}^{1,2}$,$1\leq i\leq k$, $1\leq j\leq d$, $t\leq u\leq T$ and
$$ \|Z\|_{1,2}:=\{E[\int_{t}^{T}\|Z_u\|^2du]+E[\int_{t}^{T}\int_{t}^{T}||D_{\theta}Z_u||^2du d\theta]\}^{\frac{1}{2}}<\infty.$$
$\bullet$ $\Bc^2([t,T],\mathbb{D}^{1,2}):={\cal S}_k^2([t,T],\mathbb{D}^{1,2})\times {\cal M}_{k\times d}^2([t,T],\mathbb{D}^{1,2})$.\\
We define also for a given $t \in [0,T]$:\\
$\bullet$  $L^2([t,T],\mathbb{D}^{1,2})$ is the set of $(\mathcal{F}_s^t)_{s\leq T}$-measurable processes $(v_s)_{t\leq s\leq T}$ such that:\\
(i) $v(s,.)\in \mathbb{D}^{1,2}$, for a.e. $s\in [t,T]$,\\
(ii) $(s,w)\longrightarrow Dv(s,w)\in L^2([t,T]\times \Omega)$,\\
(iii) $E[\int_t^T|v_s|^2ds]+E[\int_t^T\int_t^T|D_uv_s|^2duds]<\infty$.\\
$\bullet$ $L^2([t,T],\mathbb{D}^{1,2}\times \mathbb{D}^{1,2}):=L^2([t,T],\mathbb{D}^{1,2})\times L^2([t,T],\mathbb{D}^{1,2})$.\\

For all $(t,x) \in [0,T]\times\mathbb{R}^{d}$, let $(X_{s}^{t,x})_{0\leq s\leq T}$ be the unique strong solution of the following  stochastic differential equation:
\begin{eqnarray}\label{forward}
dX_{s}^{t,x}=b(X_{s}^{t,x})ds+\sigma(X_{s}^{t,x})dW_{s},\quad s\in [t,T],\qquad X_{s}^{t,x}=x,\quad 0\leq s\leq t,
\end{eqnarray}
where $b$ and $\sigma$ are two measurable functions on $\mathbb{R}^{d}$ with values respectively in $\mathbb{R}^{d}$ and $\mathbb{R}^{d\times d}$. We will omit the dependance of the forward process $X$ in the initial condition if it starts at time $t=0$.\\
We consider the following BDSDE: For all $ t\leq s\leq T$,
\begin{equation}\label{1}
\left\{
\begin{array}{ll}
dY_{s}^{t,x}&= -f(s,X_{s}^{t,x},Y_{s}^{t,x},Z_{s}^{t,x})ds -g(s,X_{s}^{t,x},Y_{s}^{t,x},Z_{s}^{t,x})\overleftarrow{dB_{s}}
+Z_{s}^{t,x} dW_s,\\
Y_{T}^{t,x}&=\Phi(X_{T}^{t,x}),
\end{array}
\right.
\end{equation}
where $f$ and $\Phi$ are two measurable functions respectively on $[0,T]\times\mathbb{R}^{d}\times\mathbb{R}^{k}\times\mathbb{R}^{k\times d}$ and $\mathbb{R}^{d}$
with values in $\mathbb{R}^{k}$ and
 $g$ is a measurable function on $[0,T]\times \mathbb{R}^{d}\times\mathbb{R}^{k}\times\mathbb{R}^{k\times d}$ with values in $\mathbb{R}^{k\times l}$.\\
Note that the integral with respect to $(B_{s},t\leq s\leq T)$ is a "backward It\^o integral" (see Kunita \cite{Kunitabook} and Nualart and Pardoux \cite{NP88} for the definition) and the integral with respect to $(W_{s},t\leq s\leq T)$
is a standard forward It\^o integral.\\
Finally, for each real matrix $A$, we denote by $\|A\|$ its Frobenius norm defined by $\|A\|=(\sum_{i,j}a_{i,j}^2)^{1/2}$.\\
For a vector $x$, $|x|$ stands for its Euclidean norm defined by $|x|=(\sum_i |x_i|^2)^{1/2}$.\\
The following assumptions will be needed in our work:\\

\textbf{Assumption (H1)} There exists a positive constant $K$ such that  $\forall x, x' \in \mathbb{R}^{d}$\\
\begin{eqnarray*}
 &&|b(x)-b(x')| + \|\sigma(x) - \sigma(x')\| \leq K|x-x'|.
\end{eqnarray*}

\textbf{Assumption (H2)} There exist two constants $K>0$ and $0\leq \alpha < 1$ such that\\
for any $(t_1,x_{1},y_{1},z_{1}),(t_2,x_{2},y_{2},z_{2})\in [0,T]\times\mathbb{R}^{d}
\times\mathbb{R}^{k}\times \mathbb{R}^{k\times d} ,$
\begin{eqnarray*}
\textbf{(i)}&&|f(t_1,x_{1},y_{1},z_{1})-f(t_2,x_{2},y_{2},z_{2})|
\leq K \big(\sqrt{|t_{1}-t_{2}|}+|x_{1}-x_{2}|+|y_{1}-y_{2}|+\|z_{1}-z_{2} \|\big),\\
\textbf{(ii)}&&\|g(t_1,x_{1},y_{1},z_{1})-g(t_2,x_{2},y_{2},z_{2})\|^{2}
\leq
K^2\big(|t_{1}-t_{2}|+|x_{1}-x_{2}|^{2}+|y_{1}-y_{2}|^{2} \big)+\alpha^{2} \|z_{1}-z_{2} \|^{2},\\
\textbf{(iii)}&&|\Phi(x_1)-\Phi(x_2)|\leq K|x_{1}-x_{2}|,\\
\textbf{(iv)}&&\Sup_{0\leq t\leq T}(|f(t,0,0,0)|+||g(t,0,0,0)||)\leq K .
\end{eqnarray*}

\textbf{Assumption (H3)}
\begin{eqnarray*}
&\textbf{(i)}& b\in C^{2}_b(\mathbb{R}^{d},\mathbb{R}^{d})\mbox{ and }\sigma\in C^{2}_b(\mathbb{R}^{d},\mathbb{R}^{d\times d})\\
&\textbf{(ii)}&\Phi\in C^{2}_b(\mathbb{R}^{d},\mathbb{R}^{k}), f\in C^{2}_b([0,T]\times\mathbb{R}^{d}\times \mathbb{R}^{k}\times \mathbb{R}^{d\times k},\mathbb{R}^{k})\\
&\mbox{ and }& \quad \quad g\in C^{2}_b([0,T]\times\mathbb{R}^{d}\times \mathbb{R}^{k}\times \mathbb{R}^{d\times k},\mathbb{R}^{k\times l}).
\end{eqnarray*}
We state the following result proved in \cite{par94} (Theorem 1.1. p.212)
\begin{thm}
Under Assumptions \textbf{(H1)} and \textbf{(H2)},  there exists a unique solution
$(Y,Z)$  of the BDSDE \reff{1} which belongs to $ \mathbb{S}^{2}_k([t,T])\times \mathbb{H}^{2}_{k\times d}([t,T])$.
\end{thm}

\begin{Remark}
The regularity conditions on the time-space variable $(t,x)$ of $f$, $g$ and $\Phi$ are needed for the estimates for the time
discretization error of the solution $(Y,Z)$ in section \ref{section-discrete-time-error}.
\end{Remark}
From \cite{kar}, \cite{par94}  (Theorem 1.4 p. 217) and \cite{kun84}, the standard estimates for the solution of the Forward-Backward Doubly SDE (\ref{forward})-(\ref{1}) hold
and we remind the following theorem:
\begin{thm}
Under Assumptions \textbf{(H1)} and \textbf{(H2)} and for some   $p\geq 2$, there exist  two positive constants $C$ and $C_p$ independent of $x$
 and an integer q such that:
\begin{eqnarray}\label{integrability}
E[\sup_{t\leq s\leq T}|X_{s}^{t,x}|^{2}]\leq C(1+|x|^2),
\end{eqnarray}
\begin{equation}\label{apriori1}
E\Big[\sup_{t\leq s\leq T}|Y_{s}^{t,x}|^{p}+\Big(\int_{t}^{T}\|Z_{s}^{t,x}\|^{2}ds \Big)^{p/2} \Big]
\leq C_p(1+|x|^{q}).
\end{equation}%
\end{thm}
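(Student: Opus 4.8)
The plan is to establish the two estimates separately, dealing first with the forward process and then with the backward component, since the latter inherits the bounds obtained for the former.

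For the forward estimate \reff{integrability} I would invoke the classical moment bounds for It\^o diffusions. Applying It\^o's formula to $|X_s^{t,x}|^2$ produces a drift term governed by $2 X_r^{t,x} b(X_r^{t,x})$ together with the quadratic-variation term $\|\sigma(X_r^{t,x})\|^2$. Assumption \textbf{(H1)} yields the linear growth $|b(x)|+\|\sigma(x)\|\leq K(1+|x|)$, so both are bounded by $C(1+|X_r^{t,x}|^2)$. Taking the supremum over $s$, controlling the stochastic integral $\int\sigma(X_r^{t,x})\,dW_r$ by the Burkholder--Davis--Gundy inequality, and then applying Gronwall's lemma gives \reff{integrability}. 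This step is routine.

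For the backward estimate \reff{apriori1} the essential tool is the doubly stochastic It\^o formula of Pardoux and Peng applied to $|Y_s^{t,x}|^p$. In the case $p=2$ this yields the identity
\begin{equation*}
|Y_s|^2 + \int_s^T \|Z_r\|^2\,dr = |\Phi(X_T)|^2 + 2\int_s^T Y_r f_r\,dr + \int_s^T \|g_r\|^2\,dr + 2\int_s^T Y_r g_r\,\overleftarrow{dB_r} - 2\int_s^T Y_r Z_r\,dW_r,
\end{equation*}
where $f_r=f(r,X_r,Y_r,Z_r)$, $g_r=g(r,X_r,Y_r,Z_r)$ and the indicated products are the appropriate inner or matrix products. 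The decisive feature, and the point at which the doubly stochastic structure departs from the classical BSDE setting, is that the backward integral against $B$ contributes the term $+\int_s^T\|g_r\|^2\,dr$ with a \emph{positive} sign. Using \textbf{(H2)(ii)} together with \textbf{(H2)(iv)} one has $\|g_r\|^2\leq C(1+|X_r|^2+|Y_r|^2)+\alpha^2\|Z_r\|^2$, so this term reintroduces $\|Z\|^2$ on the right-hand side. Transferring it to the left produces the coefficient $(1-\alpha^2)$ in front of $\int_s^T\|Z_r\|^2\,dr$, and here the contraction condition $0\leq\alpha<1$ is indispensable: it is exactly what keeps this coefficient strictly positive and lets the $Z$-norm be controlled.

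To close the argument I would take expectations, which annihilates the $W$- and $B$-martingale terms; bound $|\Phi(X_T)|^2$ by $C(1+|X_T|^2)$ via \textbf{(H2)(iii)--(iv)} and feed in the forward estimate \reff{integrability}; treat the cross term $2Y_r f_r$ with Young's inequality so as to split off a small multiple of $\|Z_r\|^2$ (absorbed on the left) from terms in $|Y_r|^2$; and then apply Gronwall's lemma to control $\sup_s E|Y_s|^2$ and hence $E\int_t^T\|Z_r\|^2\,dr$. Returning to the identity and estimating the martingale terms with the Burkholder--Davis--Gundy inequality upgrades this to the bound on $E[\sup_s|Y_s|^2]$. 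For general $p\geq 2$ the same scheme applies to $|Y_s|^p$, the only extra care being repeated use of Young's inequality at the exponents $p$ and $p/2$ to absorb the supremum and the $Z$-integral into the left-hand side, together with the forward bound \reff{integrability} to supply the power $|x|^q$. The main obstacle throughout is precisely the sign of the $\|g\|^2$ term: without the contraction hypothesis $\alpha<1$ the estimate on $Z$ could not be closed.
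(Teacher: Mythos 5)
The paper does not actually prove this theorem: it is recalled as a standard estimate from El Karoui--Peng--Quenez, Pardoux--Peng and Kunita, so there is no internal proof to compare against. Your sketch reconstructs the argument those references contain, and for the forward bound and for the case $p=2$ of the backward bound it is correct: the generalized It\^o formula does produce the backward integral's quadratic variation $+\int_s^T\|g_r\|^2\,dr$ with a positive sign, and the contraction condition $\alpha<1$ is precisely what allows the resulting $\alpha^2\int_s^T\|Z_r\|^2\,dr$ to be absorbed into the left-hand side.

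The one place where your plan as written would break down is the extension to general $p\geq 2$ by applying It\^o directly to $|Y_s|^p$. The Hessian of $y\mapsto|y|^p$ is $p|y|^{p-2}I+p(p-2)|y|^{p-4}yy^*$, so the backward quadratic-variation term enters the right-hand side with a coefficient that can be as large as $\frac{p(p-1)}{2}|Y_r|^{p-2}\|g_r\|^2$, while the forward quadratic variation is only guaranteed to supply $\frac{p}{2}|Y_r|^{p-2}\|Z_r\|^2$ on the left (the extra $yy^*$ contribution involves $|Z_r^*Y_r|^2$, which may vanish when $Y$ is vector-valued). The absorption then requires $(p-1)\alpha^2<1$, which fails for large $p$ when $\alpha$ is close to $1$, whereas the hypothesis is only $\alpha<1$. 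The standard repair is to establish the $p=2$ identity first, then bound $\big(\int_s^T\|Z_r\|^2\,dr\big)^{p/2}$ by raising that identity to the power $p/2$ and applying Young's inequality, so that the term to be absorbed is $\alpha^p\big(\int_s^T\|Z_r\|^2\,dr\big)^{p/2}$ with $\alpha^p<1$, and to obtain $E[\sup_s|Y_s|^p]$ from the equation itself via the Burkholder--Davis--Gundy inequality rather than from It\^o applied to $|Y|^p$. With that modification your argument is the one carried out in the cited references.
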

\begin{Remark}
The superscript $(t,x)$ indicates the dependence of the solution $(X,Y,Z)$ on the initial date $(t,x)$.
When it is clear, we omit the dependence of $(Y^{t,x},Z^{t,x})$ on $(t,x)$ .\\
It should also be noted that in the next computations, the constant $C$ denotes a generic constant that may change from line to line.
It depends on K, T, $\alpha$, $|b(0)|$, $||\sigma(0)||$, $|f(t,0,0,0)|$ and $||g(t,0,0,0)||$.
\end{Remark}
\subsection{Numerical Scheme for decoupled Forward-BDSDE}
\label{scheme:subsection}
In order to approximate the solution of the BDSDE (\ref{1}),  the following discretized version is introduced. Let
\begin{eqnarray}\label{disctime}
\pi:t_{0}=0<t_{1}
<\ldots<t_{N}=T,
\end{eqnarray}
be a partition of the time interval $[0,T]$. For simplicity we take an equidistant partition of  $[0,T]$ i.e. $h = \frac{T}{N}$ and
$t_n=n h$, $0\leq n\leq N$.
 Throughout the rest, the notations $\Delta W_{n}=W_{t_{n+1}}-W_{t_{n}}$ and $\Delta B_{n}=B_{t_{n+1}}-B_{t_{n}}$, for $n=1,\ldots,N$ will be used.\\
 The forward component $X$ will be approximated by the classical Euler scheme:
 \begin{eqnarray}
 \label{Xn}
\left\{\begin{array}{ll}
X_{t_{0}}^{N}&=X_{t_{0}},\\
 X_{t_{n}}^{N}&=X_{t_{n-1}}^{N}+b(X_{t_{n-1}}^{N})(t_{n}-t_{n-1})+\sigma(X_{t_{n-1}}^{N})(W_{t_{n}}-W_{t_{n-1}}), \textrm{ for } n=1,\ldots,N.
\end{array}
\right.
\end{eqnarray}
Note the following lemma (see \cite{Kloeden-Platen}):
\begin{Lemma}\label{Lemma-estimation X}
Under Assumption \textbf{(H1)}, there exists a positive constant $C$ independent of $x$ and depending on $K$,$T$, $|b(0)|$ and
$\|\sigma(0)\|$ such that for all $s \in [t_{n},t_{n+1})$ and for all $n=0,\ldots,N-1$ we have:
\begin{eqnarray}\label{estimation X}
E\Big[|X_{s}-X_{t_{n}}^{N}|^{2}+|X_{s}-X_{t_{n+1}}^{N}|^{2}\Big] \leq C h(1+|x|^2).
\end{eqnarray}
\end{Lemma}
The solution $(Y,Z)$ of (\ref{1}) is approximated by $(Y^{N},Z^{N})$ defined by:
\begin{equation}\label{4}
Y_{t_{N}}^{N} = \Phi(X_{T}^{N})\textrm{ and } Z_{t_{N}}^{N}=0,
\end{equation}
and for $n=N-1,\ldots,0$, we set
\begin{equation}\label{Yn}
Y_{t_{n}}^{N} = E_{t_{n}}[Y_{t_{n+1}}^{N}+g(t_{n+1},\Theta_{n+1}^{N})\Delta B_{n}] + h f(t_{n},\Theta_{n}^{N}),
\end{equation}
\begin{equation}\label{Zn}
h Z_{t_{n}}^{N} = E_{t_{n}}\Bigg[Y_{t_{n+1}}^{N}\Delta W_{n}^*
+ g(t_{n+1},\Theta_{n+1}^{N})
\Delta B_{n}\Delta W^*_{n}\Bigg],
\end{equation}
where
\begin{eqnarray*}
\Theta_{n}^{N}:=(X_{t_{n}}^{N},Y_{t_{n}}^{N},Z_{t_{n}}^{N}), \textrm{for all } n=0,\ldots,N.
\end{eqnarray*}
$*$ denotes the transposition operator and
$E_{t_{n}}$ denotes the conditional expectation over the $\sigma$-algebra $\Fc_{t_{n}}$.\\
\begin{Remark}
By construction, $(Y_{t_n}^N,Z_{t_n}^N)_{n\geq 0}$ are square integrable.
For the approximation of $Y_{t_n}^N$, $(\ref{Yn})$ is well-defined, indeed $Y_{t_n}^N(\omega)$ is a fixed point of
\begin{equation*}
\varphi(x)=hf(t_n,X_{t_{n}}^{N}(\omega),x,Z_{t_{n}}^{N}(\omega))+
E_{t_{n}}[Y_{t_{n+1}}^{N}+g(t_{n+1},X_{t_{n+1}}^{N},Y_{t_{n+1}}^{N},Z_{t_{n+1}}^{N})\Delta B_{n}](\omega),
\end{equation*}
which exists and is unique as soon as $Kh<1$. Such a condition holds when $h$ is small enough.
%
%
\end{Remark}
For later use, a continuous approximation of the solution of BDSDE (\ref{1}) must be introduced. We define:
\begin{equation}\label{1N}
Y_t^N:=Y_{t_{n+1}}^{N} +  \!\displaystyle{\int_{t}^{t_{n+1}}\!\!\!\!\!\!\!\!f(t_{n},\Theta_{n}^{N})ds}
+\!\displaystyle{\int_{t}^{t_{n+1}}\!\!\!\!\!\!\!\!g(t_{n+1},\Theta_{n+1}^{N})\overleftarrow{dB_{s}}}
-\!\displaystyle{\int_{t}^{t_{n+1}}\!\!\!\!\!\!Z_{s}^{N} dW_s},\textrm{ } t_{n}\leq t < t_{n+1}.
\end{equation}
where
\begin{eqnarray*}
\Theta_{n}^{N}:=(X_{t_{n}}^{N},Y_{t_{n}}^{N},Z_{t_{n}}^{N}), \textrm{for all } n=0,\ldots,N.
\end{eqnarray*}
The following property of $Z^N$ is needed later.
\begin{Lemma}\label{lemme relation Z_set Z_tn}
For all $n=0,\ldots,N-1$, we have
\begin{eqnarray}\label{projectionZN}
Z_{t_{n}}^{N}=\frac{1}{h}E_{t_{n}}[\int_{t_{n}}^{t_{n+1}} Z_{s}^{N}ds]\quad P-a.s.
\end{eqnarray}
\end{Lemma}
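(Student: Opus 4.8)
The plan is to test the continuous equation \eqref{1N} against the Brownian increment $\Delta W_n$ and then read off \eqref{projectionZN} from the definition \eqref{Zn} of $Z_{t_n}^N$. First I would evaluate \eqref{1N} at $t=t_n$. Since $\Theta_n^N=(X_{t_n}^N,Y_{t_n}^N,Z_{t_n}^N)$ is $\Fc_{t_n}^0$-measurable, the integrand $f(t_n,\Theta_n^N)$ is constant in $s$ on $[t_n,t_{n+1})$, so $\int_{t_n}^{t_{n+1}}f(t_n,\Theta_n^N)\,ds=h\,f(t_n,\Theta_n^N)$; similarly $g(t_{n+1},\Theta_{n+1}^N)$ is $\Fc_{t_{n+1}}$-measurable and does not depend on $s$, so the backward integral collapses to $g(t_{n+1},\Theta_{n+1}^N)\Delta B_n$. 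This gives
\begin{equation*}
Y_{t_{n+1}}^N=Y_{t_n}^N-h\,f(t_n,\Theta_n^N)-g(t_{n+1},\Theta_{n+1}^N)\Delta B_n+\int_{t_n}^{t_{n+1}}Z_s^N\,dW_s.
\end{equation*}

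Next I would multiply this identity on the right by $(\Delta W_n)^*$ and apply the conditional expectation $E_{t_n}[\cdot]$. The key filtration fact is that $\Delta W_n=W_{t_{n+1}}-W_{t_n}$ is independent of $\Fc_{t_n}^0=\Fc_{0,t_n}^W\vee\Fc_{t_n,T}^B$ and centered, hence $E_{t_n}[\Delta W_n]=0$. Since both $Y_{t_n}^N$ and $f(t_n,\Theta_n^N)$ are $\Fc_{t_n}^0$-measurable, the corresponding terms vanish: $E_{t_n}[Y_{t_n}^N(\Delta W_n)^*]=Y_{t_n}^N\,E_{t_n}[(\Delta W_n)^*]=0$ and likewise $E_{t_n}[h\,f(t_n,\Theta_n^N)(\Delta W_n)^*]=0$. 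What remains is
\begin{equation*}
E_{t_n}\big[Y_{t_{n+1}}^N(\Delta W_n)^*\big]=-E_{t_n}\big[g(t_{n+1},\Theta_{n+1}^N)\Delta B_n(\Delta W_n)^*\big]+E_{t_n}\Big[\Big(\int_{t_n}^{t_{n+1}}Z_s^N\,dW_s\Big)(\Delta W_n)^*\Big].
\end{equation*}

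The last term I would handle with the conditional It\^o isometry: writing $(\Delta W_n)^*=\big(\int_{t_n}^{t_{n+1}}dW_s\big)^*$ and using $d\langle W\rangle_s=I_d\,ds$, the cross-variation identity yields entrywise
\begin{equation*}
E_{t_n}\Big[\Big(\int_{t_n}^{t_{n+1}}Z_s^N\,dW_s\Big)(\Delta W_n)^*\Big]=E_{t_n}\Big[\int_{t_n}^{t_{n+1}}Z_s^N\,ds\Big].
\end{equation*}
Substituting this back and rearranging gives $E_{t_n}[\int_{t_n}^{t_{n+1}}Z_s^N\,ds]=E_{t_n}[Y_{t_{n+1}}^N(\Delta W_n)^*]+E_{t_n}[g(t_{n+1},\Theta_{n+1}^N)\Delta B_n(\Delta W_n)^*]$, which is exactly $h\,Z_{t_n}^N$ by \eqref{Zn}, proving \eqref{projectionZN}.

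I expect the only delicate points to be the two structural facts special to the doubly stochastic setting: that $\Delta W_n$ is genuinely independent of $\Fc_{t_n}^0$ (which holds because $\Fc_{t_n}^0$ only sees $W$ up to $t_n$ and $B$ after $t_n$, both independent of $\Delta W_n$), and the validity of the conditional It\^o isometry for the forward integral $\int Z_s^N\,dW_s$ with respect to $\Fc_{t_n}^0$. Granting these, every other step is merely the collapse of a constant-in-$s$ integrand or the vanishing of a centered increment.
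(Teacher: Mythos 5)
Your proposal is correct and follows essentially the same route as the paper: test the continuous equation \eqref{1N} at $t=t_n$ against $\Delta W_n$, kill the $Y_{t_n}^N$ and $f(t_n,\Theta_n^N)$ terms by $\mathcal{F}_{t_n}^0$-measurability, identify the surviving terms with $hZ_{t_n}^N$ via \eqref{Zn}, and reduce $E_{t_n}[\int_{t_n}^{t_{n+1}}Z_s^N\,dW_s\,\Delta W_n^*]$ to $E_{t_n}[\int_{t_n}^{t_{n+1}}Z_s^N\,ds]$ — the paper does this last step by the It\^o product (integration by parts) formula, which is the same computation as your conditional cross-variation identity.
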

\begin{proof}
From (\ref{1N}) we have
\begin{eqnarray*}\label{}
\int_{t_{n}}^{t_{n+1}}Z_{s}^{N}dW_{s}\Delta W_{n}&=&Y_{t_{n+1}}^{N}\Delta W_{n}+
 \int_{t_{n}}^{t_{n+1}} f(t_{n},\Theta_{n}^{N})ds \Delta W_{n}
\nonumber\\
&+&\int_{t_{n}}^{t_{n+1}} g(t_{n+1},\Theta_{n+1}^{N}) \overleftarrow{dB_{s}}\Delta W_{n} - Y_{t_{n}}^{N}\Delta W_{n}.
\end{eqnarray*}
Taking the conditional expectation we get
\begin{eqnarray*}\label{}
E_{t_{n}}[\int_{t_{n}}^{t_{n+1}} Z_{s}^{N}dW_{s}\Delta W_{n}]&=&E_{t_{n}}[Y_{t_{n+1}}^{N}\Delta W_{n}]+
 \E_{t_{n}}[\int_{t_{n}}^{t_{n+1}} f(t_{n},\Theta_{n}^{N})ds \Delta W_{n}]\nonumber\\
&+& E_{t_{n}}[\int_{t_{n}}^{t_{n+1}} g(t_{n+1},\Theta_{n+1}^{N}) \overleftarrow{dB_{s}}\Delta W_{n}]
-E_{t_{n}}[Y_{t_{n}}^{N}\Delta W_{n}]\nonumber\\
&=&E_{t_{n}}[Y_{t_{n+1}}^{N}\Delta W_{n}]+ h E_{t_{n}}[f(t_{n},\Theta_{n}^{N}) \Delta W_{n}]\nonumber\\
&+&E_{t_{n}}[ g(t_{n+1},\Theta_{n+1}^{N}) \Delta B_{n} \Delta W_{n}]-E_{t_{n}}[Y_{t_{n}}^{N}\Delta W_{n}].
\end{eqnarray*}
Using the fact that $Y_{t_{n}}^{N}$ and $f(t_{n},\Theta_{n}^{N})$ are $\mathcal{F}_{t_{n}}$-measurable, we obtain
\begin{eqnarray}\label{relation1}
E_{t_{n}}[\int_{t_{n}}^{t_{n+1}} Z_{s}^{N}dW_{s}\Delta W_{n}]=E_{t_{n}}[Y_{t_{n+1}}^{N}\Delta W_{n}]+E_{t_{n}}[ g(t_{n+1},\Theta_{n+1}^{N}) \Delta B_{n} \Delta W_{n}^*].
\end{eqnarray}
By using the integration by parts formula, we have
\begin{eqnarray*}\label{}
E_{t_{n}}[\int_{t_{n}}^{t_{n+1}} Z_{s}^{N}dW_{s}\Delta W_{n}]&=&E_{t_{n}}[\int_{t_{n}}^{t_{n+1}}\int_{t_{n}}^{s}dW_u Z_{s}^{N}dW_s]\\
&+&E_{t_{n}}[\int_{t_{n}}^{t_{n+1}}\int_{t_{n}}^{s} Z_{u}^{N}dW_udW_s]+E_{t_{n}}[\int_{t_{n}}^{t_{n+1}}Z_{s}^{N}ds].
\end{eqnarray*}
Then
\begin{eqnarray}\label{relation2}
E_{t_{n}}[\int_{t_{n}}^{t_{n+1}} Z_{s}^{N}dW_{s}\Delta W_{n}]=E_{t_{n}}[\int_{t_{n}}^{t_{n+1}}Z_{s}^{N}ds].
\end{eqnarray}
Equations (\ref{relation1}) and (\ref{relation2}) together with  (\ref{Zn}) give that
\begin{eqnarray*}\label{}
h Z_{t_n}^N=E_{t_{n}}[\int_{t_{n}}^{t_{n+1}} Z_{s}^{N}ds].
\end{eqnarray*}
\end{proof}
\ep\\

\section{The discrete time approximation error}\label{section-discrete-time-error}
Fisrt, the step process $\bar{Z}$ is defined by
\begin{eqnarray}\label{barZdef}
\left\{ \begin{array}{lll}
\bar{Z}_{t}=\frac{1}{h}\displaystyle{E_{t_{n}}[\int_{t_{n}}^{t_{n+1}} Z_{s}ds]}, \textrm{ for all } t \in [t_n,t_{n+1}), \textrm{ for all } n \in \{0,\ldots,N-1\},\\
\bar{Z}_{t_{N}}=0.
\end{array} \right.
\end{eqnarray}
The following theorem states an upper bound result regarding the time discretization error.
\begin{thm}\label{upper-bound}
Define the square error by
\begin{eqnarray}\label{error(Y,Z)}
Error_{N}(Y,Z):=\sup_{0\leq s \leq T}E[|Y_{s}-Y_{s}^{N}|^{2}] +\sum_{n=0}^{N-1}E[\int_{t_n}^{t_{n+1}}||Z_{s}-Z_{s}^{N}||^{2}ds],
\end{eqnarray}
where $Y^{N}$ and $Z^{N}$ are given by (\ref{1N}).
Under Assumptions \textbf{(H1)} and \textbf{(H2)} we have
\begin{eqnarray}\label{convergence}
Error_{N}(Y,Z) &\leq&  C h(1+|x|^2) + C \sum_{n=0}^{N-1} \int_{t_{n}}^{t_{n+1}}E[||Z_{s}-\bar{Z}_{t_{n}}||^{2}]ds \nonumber\\
&+&  C \sum_{n=0}^{N-1}\! \int_{t_{n}}^{t_{n+1}}\!\!\!\!\!\!\!\!\!\! E[||Z_{s}-\bar{Z}_{t_{n+1}}||^{2}]ds
+ C \sum_{n=0}^{N-1} \int_{t_{n}}^{t_{n+1}}E[|Y_{s}-Y_{t_{n}}|^{2}]ds\nonumber\\
&+& C \sum_{n=0}^{N-1} \int_{t_{n}}^{t_{n+1}}E[|Y_{s}-Y_{t_{n+1}}|^{2}]ds.
\end{eqnarray}
\end{thm}
Before proving Theorem \ref{upper-bound}, we need the following lemma whose proof is given in the Appendix.
For all $t\in[t_n,t_{n+1})$, $n=0,\ldots,N-1$, the following quantities are defined:
\begin{equation}\label{}
\left\{
\begin{array}{llll}
\theta_{t}:=(X_{t},Y_{t},Z_{t})\textrm{ },\delta Y_{t}^{N}:=Y_{t}-Y_{t}^{N},\textrm{ }\delta Z_{t}^{N}:=Z_{t}-Z_{t}^{N},\\
\delta f_{t}:=f(t,\theta_{t})-f(t_{n},\Theta_{n}^{N}),\\
\delta g_{t}:=g(t,\theta_{t})-g(t_{n+1},\Theta_{n+1}^{N}).
\end{array}
\right.
\end{equation}
Introduce the following term: for $n\leq N-1$
\begin{equation}\label{}
R_n:= Ch^2(1+|x|^2)+
C\int_{t_n}^{t_{n+1}}E\big[|Y_{s}-{Y}_{t_{n}}|^{2}+|Y_{s}-{Y}_{t_{n+1}}|^{2}+||Z_{s}-\bar{Z}_{t_{n}}||^{2}
+||Z_{s}-\bar{Z}_{t_{n+1}}||^{2}\big]ds
\end{equation}
\begin{Lemma}\label{first step}
Under Assumptions \textbf{(H1)} and \textbf{(H2)}, there exists a constant $\alpha ' \in (0,1)$
such that for a constant $C>0$
\begin{eqnarray}\label{majYtnZtn}
&&\frac{1}{C}\sup_{t\in[t_n,t_{n+1}]}E[|\delta Y_{t}^{N}|^{2}]+E\Big[|\delta Y_{t_n}^{N}|^{2}+
 \frac{1+\alpha'}{2}\int_{t_{n}}^{t_{n+1}}\!\!\!\!\|\delta Z_{s}^{N}\|^{2}ds\Big]
\leq  \nonumber\\
&&(1+Ch)\Bigg\{E\Big[|\delta Y_{t_{n+1}}^{N}|^{2} + \alpha'\mathds{1}_{\{n<N-1\}} \int_{t_{n+1}}^{t_{n+2}}\!\!\!\!\|\delta Z_s^N\|^2ds\Big]
+ R_n \Bigg\}.
\end{eqnarray}
\end{Lemma}
\textbf{Proof of Theorem \ref{upper-bound}.}
To alleviate the presentation, we introduce $y_n:=E[|\delta Y_{t_{n}}^{N}|^{2}]$ and $z_n:=E\Big[\int_{t_{n}}^{t_{n+1}}\|\delta Z_{s}^{N}\|^{2}ds\Big]$.
From Lemma \ref{first step}, we have for all $n=0,..., N-1$
\begin{eqnarray}\label{majYtnZtn-2}
y_n+ \frac{1+\alpha'}{2}z_n
\leq  (1+Ch)\Big( y_{n+1}
+ \alpha' \mathds{1}_{\{n<N-1\}}z_{n+1}+ R_n\Big).
\end{eqnarray}
Summing (\ref{majYtnZtn-2}) from $n=i$ to $n=N-1$, $i\leq N-1$, we obtain
\begin{eqnarray*}\label{}
\sum_{n=i}^{N-1}y_n+\frac{1+\alpha'}{2}\sum_{n=i}^{N-1}z_n\leq
(1+C h)\Big( \sum_{n=i}^{N-1}y_{n+1}+\alpha'\sum_{n=i+1}^{N-1}z_n+
\sum_{n=i}^{N-1} R_n\Big).
\end{eqnarray*}
Then, we have
\begin{eqnarray*}\label{}
y_i+\frac{1+\alpha'}{2}\sum_{n=i}^{N-1}z_n\leq
y_N +C h \sum_{n=i+1}^{N}y_n
+(1+Ch)\alpha'\sum_{n=i+1}^{N-1}z_n
+(1+C h)\sum_{n=i}^{N-1} R_n.
\end{eqnarray*}
This leads, for $h$ small enough and since $\alpha'\in(0,1)$ to
\begin{eqnarray}\label{somme-delta-Z}
\sum_{n=i}^{N-1}z_n\leq
C\Big(y_N + h \sum_{n=i+1}^{N}y_n +\sum_{n=i}^{N-1} R_n\Big).
\end{eqnarray}
Iterating (\ref{majYtnZtn-2}) from $n=i$ to $n=N-1$, $i\leq N-1$, we obtain
\begin{eqnarray*}\label{}
y_i+ \frac{1+\alpha'}{2}z_i
\leq C\Big(y_N + \sum_{n=i+1}^{N-1}z_n +\sum_{n=i}^{N-1} R_n\Big).
\end{eqnarray*}
Combining (\ref{somme-delta-Z}) with the last inequality, we obtain
\begin{eqnarray*}\label{}
y_i
\leq C\Big(y_N +h\sum_{n=i+1}^{N}y_n +\sum_{n=0}^{N-1} R_n\Big).
\end{eqnarray*}
Using the discrete version of Gronwall's lemma, we get
\begin{eqnarray*}
\max_{0\leq i\leq N-1} y_i\leq C\Big(y_N +\sum_{n=0}^{N-1} R_n\Big).
\end{eqnarray*}
From Assumption \textbf{(H2)-(iii)}  we obtain
\begin{eqnarray}\label{delta-Y-Gronwall}
y_i\leq Ch(1+|x|^2) + C\sum_{n=0}^{N-1} R_n.
\end{eqnarray}
Therefore
\begin{eqnarray*}
h\sum_{n=i+1}^N y_n\leq Ch(1+|x|^2) + C\sum_{n=0}^{N-1} R_n.
\end{eqnarray*}
Inserting the last inequality into (\ref{somme-delta-Z}) and taking $i=0$  we obtain
\begin{eqnarray}\label{somme-delta-2}
\sum_{n=i}^{N-1}z_n\leq Ch(1+|x|^2) + C\sum_{n=0}^{N-1} R_n.
\end{eqnarray}
Combining (\ref{majYtnZtn}) with (\ref{delta-Y-Gronwall}) and (\ref{somme-delta-2}) the result is obtained.
\ep
%
\section{ Path regularity of the process $Z$}
\label{pathregualarity:section}
\setcounter{equation}{0}
\setcounter{Assumption}{0}
\setcounter{Example}{0}
\setcounter{Theorem}{0}
\setcounter{Proposition}{0}
\setcounter{Corollary}{0}
\setcounter{Lemma}{0}
\setcounter{Definition}{0}
\setcounter{Remark}{0}
The purpose of this section is to prove the $L^2$-regularity of the $Z$ component of the BDSDE's solution (\ref{BDSDE1}).
 Such a result is crucial to obtain the convergence and the rate of convergence of this numerical scheme. To this end,  the Malliavin derivatives of the solution must be introduced . This will allow us to provide a representation and regularity
   results for $Y$ and $Z$ that will immediately imply the rate of convergence of the scheme.\\
We recall the tools on the Malliavin calculus in the context of BDSDEs introduced by Pardoux and Peng \cite{par94}.
Pardoux and Peng have skipped details of this part  considering that it is just a natural extension of the work on standard BSDEs
\cite{par92}. For the sake of completeness, we give some details  which are crucial to obtaining regularity result of the process $ Z $
 and we give some technical  proofs in the Appendix.
\subsection{Malliavin calculus on the Forward SDE's}
In this section, we recall some properties on the differentiability in the Malliavin sense of the forward process $(X_{s}^{t,x})$ . Under {\bf (H3(i))}, Nualart \cite{nua}  stated that $X_{s}^{t,x} \in \mathbb{D}^{1,2}$ for any $s\in[t,T]$ and for $l\leq k$
the derivative $D_{r}^{l}X_{s}^{t,x}$ is given by:\\
(i) $D_{r}^{l}X_{s}^{t,x} = 0,\quad for\quad s<r\leq T$,\\
(ii) For any\quad$t<r\leq T$, a version of $\{D_{r}^{l}X_{s}^{t,x}, r\leq s\leq T \}$ is the unique solution of the following linear SDE
\begin{eqnarray*}
D_{r}^{l}X_{s}^{t,x} = \sigma^{l}(X_{r}^{t,x}) + \int_{r}^{s}\nabla b(X_{u}^{t,x})D_{r}^{l}X_{u}^{t,x}du
+\sum_{i=1}^{d}\int_{r}^{s}\nabla\sigma^{i}(X_{u}^{t,x})D_{r}^{l}X_{u}^{t,x}dW_{u}^{i},
\end{eqnarray*}
where $(\sigma^{i})_{i=1,\ldots,d}$ denotes the i-th column of the matrix $\sigma$.\\
Moreover, $D_{r}^{l}X_{s}^{t,x}\in \mathbb{D}^{1,2}$
for all $r, s\leq T$. For all $v\leq T$ and $l'\leq k$, we have
\begin{eqnarray*}
D_{v}^{l'}D_{r}^{l}X_{s}^{t,x}=0 \mbox{ if }s<v\vee r,
\end{eqnarray*}
and for all $s\geq v\vee r$ a version of $D_{v}^{l'}D_{r}^{l}X_{s}^{t,x}$ is the unique solution of the following SDE:
\begin{eqnarray*}
D_{v}^{l'}D_{r}^{l}X_{s}^{t,x} &= &\nabla\sigma^{l}(X_{r}^{t,x}) D_{v}^{l'}X_{r}^{t,x}
+\sum_{i=1}^{d}\nabla\sigma^{i}(X_{v}^{t,x}) D_{r}^{l}X_{v}^{t,x}\mathds{1}_{\{t\leq v \leq s\}}\\
&+&\int_{r}^{s}\Big[\sum_{j=1}^{k}\nabla((\nabla b)^{j}(X_{u}^{t,x}))D_{v}^{l'}X_{u}^{t,x}(D_{r}^{l}X_{u}^{t,x})^{j}
+\nabla b(X_{u}^{t,x})D_{v}^{l'}D_{r}^{l}X_{u}^{t,x}\Big]du \\
&+&\sum_{i=1}^{d}\int_{r}^{s}\Big[\sum_{j=1}^{k}\nabla(\nabla\sigma^{i}(X_{u}^{t,x}))^{j}D_{v}^{l'}X_{u}^{t,x}(D_{r}^{l}X_{u}^{t,x})^{j}
+\nabla\sigma^{i}(X_{u}^{t,x})D_{v}^{l'}D_{r}^{l}X_{u}^{t,x}\Big]dW_{u}^{i},
\end{eqnarray*}
where $((\nabla b)^{j})_{j=1,\ldots,k}$ (resp.$((\nabla\sigma^{i}(X_{u}^{t,x}))^{j})_{j=1,\ldots,k}$)
denotes the j-th column of the matrix $(\nabla b)$ (resp. $(\nabla\sigma^{i}(X_{u}^{t,x}))$) and $((D_{r}^{l}X_{u}^{t,x})^{j})_{j=1,\ldots,k}$
denotes the j-th component of the vector $(D_{r}^{l}X_{u}^{t,x})$.
The following inequalities will be useful later. For the proofs, we refer to Nualart \cite{nua}.
From Lemma 2.7 in \cite{nua} applied to $X$ and $D_{s}X$ and any $0\leq r\leq s\leq T$, there exists a constant $C$ which depends on $p$
such that we have the following inequalities
\begin{equation}\label{aprioriDX}
E\Big[\Sup_{0\leq u\leq T} ||D_s X_u||^p\Big] \leq C (1+|x|^{p}),
\end{equation}
\begin{equation}\label{aprioriDX3}
E\Big[\Sup_{s\vee r \leq u\leq T} ||D_s X_u-D_r X_u||^p\Big] \leq C |s-r|^{p/2}(1+|x|^{p}).
\end{equation}
The same argument applied for $D_{r}D_{s}X$ shows that there exists a constant $C$ which depends on $p$ such that
\begin{equation}\label{aprioriDX4}
E\Big[\Sup_{0\leq u\leq T} ||D_r D_s X_u||^p\Big] \leq C (1+|x|^{2p}).
\end{equation}
\subsection{Malliavin calculus for the solution of BDSDE's}
Now, our aim is to study the differentiability in the Malliavin sense of the solution of the BDSDE \reff{1}.
We start with the following lemma which shows that a backward It\^o integral is differentiable in the
Malliavin sense if and only if its integrand is so. We recall that Pardoux and Peng \cite{par92} proved
that the result holds for the classical It\^o integral.
\begin{Lemma}\label{forD12}
Let $U\in \mathbb{H}_1^2([t,T])$ and $I_{i}(U)=\int_{t}^{T}U_{r} dW^{i}_{r},
i=1,\ldots,d$.  Then, for each $\theta\in[0,T]$ we have $U_{\theta}\in \mathbb{D}^{1,2}$
if and only if $I_{i}(U)\in\mathbb{D}^{1,2}, i=1,\ldots,d$ and for all $\theta \in [0,T]$, we have
\begin{eqnarray*}
D_{\theta}I_{i}(U)&=&\int_{\theta}^{T}D_{\theta}U_{r}dW_{r}^{i}+ U_{\theta},\,\,\theta>t,\\
D_{\theta}I_{i}(U)&=&\int_{t}^{T}D_{\theta}U_{r}dW_{r}^{i},\,\,\theta\leq t.
\end{eqnarray*}
\end{Lemma}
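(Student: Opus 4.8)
The plan is to reduce the statement to the corresponding result for the classical forward It\^o integral, established by Pardoux and Peng \cite{par92}, by exploiting the product structure $\Omega=\Omega_W\times\Omega_B$ together with the independence of $W$ and $B$. The key observation is that the Malliavin derivative $D$ introduced above differentiates only in the directions of the Wiener process $W$ (the $h_i\in L^2([0,T],\mathbb{R}^d)$), so that the $B$-variable plays the role of an inert parameter. Concretely, I would freeze $\omega_B\in\Omega_B$, work on the Wiener space $(\Omega_W,\mathcal{F}^W,P_W)$ on which $D$ coincides with the ordinary Malliavin derivative, apply the known result fibrewise, and then integrate the fibrewise conclusions back over $\omega_B$.

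First I would verify that $I_i(U)$ is a genuine forward It\^o integral. Setting $\mathcal{G}_s:=\mathcal{F}_{t,s}^W\vee\mathcal{F}^B$, the integrand $U_r$ is $\mathcal{G}_r$-measurable since $\mathcal{F}_{r,T}^B\subset\mathcal{F}^B$, and $W$ remains a Brownian motion for $(\mathcal{G}_s)$ because its future increments are independent of $\mathcal{F}_{t,s}^W\vee\mathcal{F}^B$. The crucial step is then a Fubini-type identification: for $P_B$-a.e. $\omega_B$ the process $U_\cdot(\cdot,\omega_B)$ is $(\mathcal{F}_{t,r}^W)$-adapted and lies in $\mathbb{H}_1^2([t,T])$ over $\Omega_W$, and
$$I_i(U)(\omega_W,\omega_B)=\int_t^T U_r(\cdot,\omega_B)\,dW_r^i(\omega_W)\quad\text{for } P_W\text{-a.e. } \omega_W .$$
I would establish this by approximating $U$ in $\mathbb{H}_1^2$ by simple step processes $U^n=\sum_j F_j^n\,\mathbf{1}_{(t_j,t_{j+1}]}$ with $F_j^n$ bounded and $\mathcal{G}_{t_j}$-measurable, for which the identity is immediate since the integral reduces to the finite sum $\sum_j F_j^n(\cdot,\omega_B)\,(W_{t_{j+1}}-W_{t_j})$; passing to the $L^2(\Omega)$-limit and extracting a subsequence then yields the equality for a.e. $\omega_B$.

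With this identification in hand, I would invoke the classical result of Pardoux and Peng \cite{par92} on $(\Omega_W,\mathcal{F}^W,P_W)$ for $P_B$-a.e. fixed $\omega_B$, in both directions: the integral $I_i(U)(\cdot,\omega_B)$ belongs to $\mathbb{D}^{1,2}(\Omega_W)$ if and only if $U_\theta(\cdot,\omega_B)\in\mathbb{D}^{1,2}(\Omega_W)$, with derivative $D_\theta I_i(U)(\cdot,\omega_B)=\int_\theta^T D_\theta U_r(\cdot,\omega_B)\,dW_r^i+U_\theta(\cdot,\omega_B)$ when $\theta>t$, and without the boundary term $U_\theta$ when $\theta\leq t$, the distinction arising precisely because the diagonal $r=\theta$ then lies outside the integration window $[t,T]$. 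To transport this to the product space I would use that, because $D$ acts only in the $W$-directions, the Sobolev norm decomposes as $\|F\|_{1,2}^2=\int_{\Omega_B}\|F(\cdot,\omega_B)\|_{1,2,W}^2\,P_B(d\omega_B)$; hence $F\in\mathbb{D}^{1,2}(\Omega)$ iff $F(\cdot,\omega_B)\in\mathbb{D}^{1,2}(\Omega_W)$ for $P_B$-a.e. $\omega_B$ with integrable fibre norms, and the product-space derivative restricts fibrewise to the $W$-derivative. Applying this to $F=I_i(U)$ and to $F=U_\theta$ delivers the equivalence and the two derivative formulas, which is the assertion.

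The main obstacle will be the Fubini-type identification of the It\^o integral in the second paragraph: since the It\^o integral is only an $L^2$-limit and not a pathwise object, obtaining it $\omega_B$-wise requires the approximation-plus-subsequence argument above, used in conjunction with the verification that $W$ stays a Brownian motion in the enlarged filtration $(\mathcal{G}_s)$. The complementary fact that $\mathbb{D}^{1,2}$ over the product space decomposes as $L^2(\Omega_B;\mathbb{D}^{1,2}(\Omega_W))$ is essentially built into the definitions of $\mathbb{S}$ and $D$ recalled above, but still needs the closability of $D$ and a density argument to pass from the smooth functionals of $\mathbb{S}$ to general elements of $\mathbb{D}^{1,2}$.
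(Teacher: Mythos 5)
Your proposal is correct, but there is nothing in the paper to compare it against: the authors state Lemma \ref{forD12} without proof, merely recalling that Pardoux and Peng \cite{par92} established the corresponding result for the classical It\^o integral on Wiener space. What you supply is exactly the missing reduction that justifies that recall in the doubly stochastic setting, namely the identification of $\mathbb{D}^{1,2}$ over the product space $\Omega_W\times\Omega_B$ with $L^2(\Omega_B;\mathbb{D}^{1,2}(\Omega_W))$ (legitimate because $D$ acts only in the $W$-directions, so $\omega_B$ is an inert parameter), combined with the Fubini-type fibrewise representation of the stochastic integral and a fibrewise application of the Pardoux--Peng equivalence in both directions. All the ingredients you flag as requiring care (the step-process approximation and subsequence extraction to get the $\omega_B$-wise identification of the It\^o integral, the fact that $W$ stays a Brownian motion for $\mathcal{G}_s=\mathcal{F}_{t,s}^W\vee\mathcal{F}^B$, closability and density for the fibre decomposition of the Sobolev norm, and the integrability of the fibre norms needed to pass from ``a.e.\ fibre in $\mathbb{D}^{1,2}(\Omega_W)$'' back to membership in $\mathbb{D}^{1,2}(\Omega)$) are the right ones, and each goes through by standard arguments. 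The only caveat is one of formulation rather than substance: the clean version of the equivalence (as in \cite{par92}) is that $I_i(U)\in\mathbb{D}^{1,2}$ if and only if $U$ belongs to the class $L^2([t,T],\mathbb{D}^{1,2})$ defined in Section \ref{preliminary:section} (membership of $U_\theta$ in $\mathbb{D}^{1,2}$ for a.e.\ $\theta$ together with square-integrability of $(\theta,r)\mapsto D_\theta U_r$), not literally ``for each $\theta$'' with no integrability requirement as the lemma's wording suggests; your argument proves the correct $L^2([t,T],\mathbb{D}^{1,2})$ version, which is the one actually used in the proof of Proposition \ref{propderiv1}.
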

For backward It\^o integral, and since the Malliavin derivative is with respect to the brownian motion W, we have the following result~:
\begin{Lemma}\label{backD12}
Let $U\in \mathbb{H}_1^2([t,T])$ and $I_{i}(U)=\int_{t}^{T}U_{r} \overleftarrow{dB^{i}_{r}},
i=1,\ldots,l$. Then for each $\theta\in[0,T]$ we have $U_{\theta}\in \mathbb{D}^{1,2}$ if and only if
$I_{i}(U)\in\mathbb{D}^{1,2}, i=1,\ldots,l$ and for all $\theta \in [0,T]$, we have
\begin{eqnarray*}
D_{\theta}I_{i}(U)&=&\int_{\theta}^{T}D_{\theta}U_{r}\overleftarrow{dB^{i}_{r}},\,\,\theta>t,\\
D_{\theta}I_{i}(U)&=&\int_{t}^{T}D_{\theta}U_{r}\overleftarrow{dB^{i}_{r}},\,\,\theta\leq t.
\end{eqnarray*}
\end{Lemma}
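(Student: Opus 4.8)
The plan is to follow exactly the scheme used for the forward case in Lemma \ref{forD12}, the single genuinely new ingredient being that the Malliavin derivative $D$ is taken with respect to $W$ while the stochastic integral is taken with respect to the \emph{independent} Brownian motion $B$. This independence is precisely what makes the ``diagonal'' term $U_\theta$ that appears in Lemma \ref{forD12} disappear here, so the heart of the argument is to see that $D$ annihilates the integrator $\overleftarrow{dB}$.

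First I would establish the formula for elementary integrands. For a simple process $U_r=\sum_j F_j\mathbf{1}_{(s_j,s_{j+1}]}(r)$ with $F_j\in\mathbb{D}^{1,2}$, the backward integral reduces to the finite sum $I_i(U)=\sum_j F_j\,(B^i_{s_{j+1}}-B^i_{s_j})$. Applying the product rule for the Malliavin derivative and using that $D_\theta(B^i_{s_{j+1}}-B^i_{s_j})=0$, because the increments of $B$ are independent of $W$, one obtains $D_\theta I_i(U)=\sum_j (D_\theta F_j)(B^i_{s_{j+1}}-B^i_{s_j})=\int_t^T D_\theta U_r\,\overleftarrow{dB^i_r}$, with no extra contribution. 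The restriction of the lower limit to $\theta$ when $\theta>t$ then comes solely from the adaptedness of $U$ with respect to $W$, which forces $D_\theta U_r=0$ for $r<\theta$; when $\theta\le t$ no such cancellation takes place and the lower limit stays at $t$. This accounts for the two cases in the statement.

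Next I would pass from simple to general integrands by approximation and closability. Picking simple processes $U^n$ with $U^n\to U$ in $\mathbb{H}_1^2([t,T])$ and $DU^n\to DU$ in $L^2([0,T]\times[t,T]\times\Omega)$, the backward It\^o isometry gives $I_i(U^n)\to I_i(U)$ in $L^2(\Omega)$, while the previous step yields $D I_i(U^n)=I_i(DU^n)\to I_i(DU)$ in $L^2([0,T]\times\Omega)$, again by the isometry applied to $D_\theta U$. Since the operator $D$ is closed, this shows $I_i(U)\in\mathbb{D}^{1,2}$ together with the announced expression for $D_\theta I_i(U)$; this is the ``if'' implication, and it is this direction, together with the explicit formula, that the path-regularity and rate-of-convergence arguments actually require.

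Finally, for the converse implication (that $I_i(U)\in\mathbb{D}^{1,2}$ forces $U_\theta\in\mathbb{D}^{1,2}$ for a.e. $\theta$), I would argue by duality against the Skorokhod integral that is adjoint to $D$, combined with the Wiener chaos decomposition with respect to $W$: for a $B$-backward-adapted integrand the $W$-chaos expansion of $I_i(U)$ is obtained from that of $U$ by an isometric, $B$-measurable operation (interchanging the $W$-chaos projections with the backward $B$-integral by independence), so finiteness of the $\|\cdot\|_{1,2}$-norm of $I_i(U)$ transfers back to $U_\theta$. I expect this converse to be the main obstacle: it requires justifying the interchange of the $B$-integral with the $W$-chaos projections and handling the conditioning on the $B$-path carefully, whereas the ``if'' direction and the formula follow routinely from the simple-process computation and the closedness of $D$.
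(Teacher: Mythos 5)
The paper does not actually prove Lemma \ref{backD12}: it is stated as the backward-integral counterpart of the Pardoux--Peng result recalled in Lemma \ref{forD12}, and no argument is supplied, so there is nothing in the text to compare your proof against line by line. On its own merits your argument is sound where it matters. The computation on simple processes is the right key step: since $D$ is the Malliavin derivative with respect to $W$ and the increments of $B$ are independent of $W$, the product rule kills $D_\theta(B^i_{s_{j+1}}-B^i_{s_j})$ and no diagonal term $U_\theta$ survives, which is exactly the structural difference from Lemma \ref{forD12}; the passage to general integrands by the backward It\^o isometry and the closedness of $D$ is standard, and your observation that the two displayed cases collapse into one because adaptedness forces $D_\theta U_r=0$ for $r<\theta$ is correct. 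This covers the implication and the explicit formula, which is all the paper ever uses (in the proof of Proposition \ref{propderiv1} only the direction ``$U$ Malliavin differentiable $\Rightarrow$ $I_i(U)\in\mathbb{D}^{1,2}$'' is invoked for the backward integral).

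Two caveats. First, your converse implication is only a sketch: the interchange of the $W$-chaos projections with the backward $B$-integral and the resulting isometry $\|I_i(U)\|_{1,2}^2=\sum_n(n+1)\,n!\,\int_t^T E\big[\|f_n(\cdot,r)\|^2\big]\,dr$ is the right idea, but you have not carried it out, so as written the ``only if'' half of the lemma is not proved. Since the paper itself offers no proof of either direction and never uses the converse for the backward integral, this is a gap relative to the statement rather than relative to the paper's argument. Second, your approximation step quietly upgrades the hypothesis ``$U_\theta\in\mathbb{D}^{1,2}$ for each $\theta$'' to ``$U\in L^2([t,T],\mathbb{D}^{1,2})$'' (you need $DU\in L^2([0,T]\times[t,T]\times\Omega)$ to apply the isometry to $D_\theta U$); that is in fact the hypothesis under which the equivalence is true and under which the lemma is applied in the paper, but it would be worth saying explicitly that this joint integrability is what you are assuming.
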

For later use, using the same argument as in the classical BSDEs setting, we can prove the a priori estimates for the solution of the BDSDE
(see El Karoui et al. \cite{kar}).
\begin{Proposition}\label{aprioriest}
Let  $(\phi^1,f^1,g^1)$ and
$(\phi^2,f^2,g^2)$ be two standard parameters of the BDSDE \reff{1} and
$(Y^1,Z^1)$ and $(Y^2,Z^2)$ the associated solutions. Let Assumption {\bf (H2)} holds. For $s\in [t,T]$,
set $\delta Y_s:=Y^1_s-Y^2_s$,
$\delta_2f_s:=f^1(s,X_s,Y^2_s,Z^2_s)-f^2(s,X_s,Y^2_s,Z^2_s)$ and $\delta_2g_s:=g^1(s,X_s,Y^2_s,Z^2_s)-g^2(s,X_s,Y^2_s,Z^2_s)$.
Then, we have
\begin{eqnarray}\label{apriori2}
||\delta Y||_{\mathbb S^2_d([t,T])}^2+||\delta Z||_{\mathbb H^2_{k\times d}([t,T])}^2
\leq C E[|\delta Y_T|^2+\int_t^T|\delta_2f_s|^2ds+\int_t^T\|\delta_2g_s\|^2ds],
\end{eqnarray}
where $C$ is a positive constant depending only on $K$, $T$ and $\alpha$.
\end{Proposition}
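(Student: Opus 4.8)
The plan is to apply the generalized It\^o formula of Pardoux--Peng to $|\delta Y_s|^2$ and then absorb the $\delta Z$ terms, using crucially the contraction condition $\alpha<1$. Writing $\delta f_r:=f^1(r,X_r,Y^1_r,Z^1_r)-f^2(r,X_r,Y^2_r,Z^2_r)$ and $\delta g_r:=g^1(r,X_r,Y^1_r,Z^1_r)-g^2(r,X_r,Y^2_r,Z^2_r)$, the difference $\delta Y$ solves
\begin{eqnarray*}
\delta Y_s=\delta Y_T+\int_s^T\delta f_r\,dr+\int_s^T\delta g_r\,\overleftarrow{dB_r}-\int_s^T\delta Z_r\,dW_r,\quad s\in[t,T].
\end{eqnarray*}
Exactly as in the proof of Theorem \ref{upper-bound}, applying the generalized It\^o formula (Lemma 1.3 of \cite{par94}) to $|\delta Y_s|^2$ produces the characteristic extra term $+\int_s^T\|\delta g_r\|^2dr$ coming from the backward stochastic integral:
\begin{eqnarray*}
|\delta Y_s|^2+\int_s^T\|\delta Z_r\|^2dr&=&|\delta Y_T|^2+2\int_s^T(\delta Y_r,\delta f_r)dr+\int_s^T\|\delta g_r\|^2dr\\
&&+2\int_s^T(\delta Y_r,\delta g_r)\overleftarrow{dB_r}-2\int_s^T(\delta Y_r,\delta Z_r)dW_r.
\end{eqnarray*}

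Taking expectations kills the two stochastic integrals, which are true martingales thanks to the a priori bounds \reff{integrability} and \reff{apriori1}, leaving
\begin{eqnarray*}
E|\delta Y_s|^2+E\int_s^T\|\delta Z_r\|^2dr=E|\delta Y_T|^2+2E\int_s^T(\delta Y_r,\delta f_r)dr+E\int_s^T\|\delta g_r\|^2dr.
\end{eqnarray*}
Next I would split each increment into its Lipschitz part and its parameter part, $\delta f_r=\big(f^1(r,X_r,Y^1_r,Z^1_r)-f^1(r,X_r,Y^2_r,Z^2_r)\big)+\delta_2f_r$, and likewise for $g$. Assumption \textbf{(H2)} then gives $|\delta f_r|\leq K(|\delta Y_r|+\|\delta Z_r\|)+|\delta_2f_r|$ and, via $(a+b)^2\leq(1+\gamma)a^2+(1+\gamma^{-1})b^2$,
\begin{eqnarray*}
\|\delta g_r\|^2\leq(1+\gamma)\big(K|\delta Y_r|^2+\alpha^2\|\delta Z_r\|^2\big)+(1+\gamma^{-1})\|\delta_2g_r\|^2.
\end{eqnarray*}
Applying Young's inequality in the form $2(\delta Y_r,\delta f_r)\leq\tfrac{C}{\beta}|\delta Y_r|^2+\beta\|\delta Z_r\|^2+C|\delta_2f_r|^2$, the total coefficient of $\|\delta Z_r\|^2$ on the right-hand side becomes $\beta+(1+\gamma)\alpha^2$.

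The key point, and the place where the hypothesis $0\leq\alpha<1$ is indispensable, is that one may first choose $\gamma>0$ small enough that $(1+\gamma)\alpha^2<1$ and then $\beta>0$ small enough that $\beta+(1+\gamma)\alpha^2=:1-c<1$; the $\|\delta Z_r\|^2$ terms can then be absorbed into the left-hand side, yielding
\begin{eqnarray*}
E|\delta Y_s|^2+c\,E\int_s^T\|\delta Z_r\|^2dr\leq E|\delta Y_T|^2+C\int_s^T E|\delta Y_r|^2dr+C\,E\int_s^T\big(|\delta_2f_r|^2+\|\delta_2g_r\|^2\big)dr.
\end{eqnarray*}
Dropping the nonnegative $Z$ term and applying Gronwall's lemma controls $\sup_s E|\delta Y_s|^2$ by the data on the right; reinserting this bound then controls $E\int_t^T\|\delta Z_r\|^2dr$, giving \reff{apriori2} for the $\mathbb H^2$ norm of $\delta Z$. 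To upgrade $\sup_s E|\delta Y_s|^2$ to the genuine $\mathbb S^2$ norm $E[\sup_s|\delta Y_s|^2]$, I would return to the It\^o identity before taking expectations, take the supremum over $s$, and estimate the two martingale terms by the Burkholder--Davis--Gundy inequality, applied to the forward integral in $W$ and, analogously, to the backward integral in $B$; the factor $\sup_s|\delta Y_s|$ is then split off by Young's inequality and absorbed, while the remaining brackets are already bounded by the estimates just obtained. The main obstacle is precisely this absorption argument: the two small parameters $\gamma,\beta$ must be arranged so that the cumulative coefficient of $\|\delta Z\|^2$ stays strictly below one, which is only possible because $\alpha^2<1$. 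The BDG step for the backward integral is routine once its martingale property is justified by \reff{apriori1}.
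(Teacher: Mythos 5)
Your proof is correct and is precisely the classical a priori estimate argument (generalized It\^o formula of Pardoux--Peng, splitting each increment into its Lipschitz part and its parameter part, absorption of the $\|\delta Z\|^2$ terms on the right using $\alpha^2<1$, Gronwall, then Burkholder--Davis--Gundy to upgrade to the $\mathbb S^2$ norm) that the paper itself invokes by citing El Karoui, Peng and Quenez and omitting the details. It matches the intended approach; no gaps.
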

We need also the following estimates which are deduced from the last proposition by using the Lipschitz
condition for $f$ and $g$ and Assumption \textbf{(H2-iv)}.
\begin{Lemma}\label{app2}
Let $(X^{t,x},Y^{t,x},Z^{t,x})$ be the solution of the FBDSDE \reff{forward}-\reff{1}. Then, under Assumptions {\bf(H1)}
and {\bf(H2)}, we have
\begin{eqnarray}\label{est1}
||Y^{t,x}||_{\mathbb S^2_d}+||Z^{t,x}||_{\mathbb{H}^2_{k\times d}}\leq C(1+|x|^{2}),
\end{eqnarray}
and for all $s',s\in [t,T], s' \leq s$, we have
\begin{eqnarray}\label{apriori2yz}
E\Big[\Sup_{s'\leq u\leq s}|Y_{u}^{t,x}-Y_{s'}^{t,x}|^{2}\Big]
\leq C\Big((1+|x|^{2})|s-s'|+||Z^{t,x}||_{\mathbb{H}^2_{k\times d}[s',s]}\Big).
\end{eqnarray}
\end{Lemma}
Now, we study the differentiability in the Malliavin sense of the solution of the BDSDE which is technical. To our knowledge,
 it does not exist in the literature.
 We have to precise that Pardoux and Peng \cite{par94} have skipped details considering that it was just an easy  extension of
the work on standard BSDEs \cite{par92}. We show in the following proposition that the derivative is a solution of a linear BDSDE
(see Peng and Pardoux \cite{par92} for the standard BSDE's and also El Karoui, Peng and Quenez (\cite {kar}, Proposition 5.3)).
 The proof is postponed to the appendix.
\begin{Proposition}\label{propderiv1}
Assume that {\bf(H1)}-{\bf(H3)} hold.
For any $t\in[0,T]$ and $x\in \mathbb{R}^{d}$, let $\{(Y_{s},Z_{s}),t\leq s\leq T\}$
denotes the unique solution of the following BDSDE:
\begin{eqnarray*}
Y_{s}= \Phi(X_{T}^{t,x}) +\int_{s}^{T}f(r,X_{r}^{t,x},Y_{r},Z_{r})dr
+ \int_{s}^{T}g(r,X_{r}^{t,x},Y_{r},Z_{r})
\overleftarrow{dB_{r}} - \int_{s}^{T}Z_{r}dW_{r},\,\,t\leq s\leq T.
\end{eqnarray*}
Then,
 $(Y, Z) \in \Bc^2([t,T],\mathbb{D}^{1,2})$ and $\{D_{\theta}Y_{s},D_{\theta}Z_{s};t\leq s,\theta\leq T\}$ is given by:\\
(i) $D_{\theta}Y_{s}=0, D_{\theta}Z_{s}=0$ for all  $t\leq s<\theta\leq T$\\
(ii) for any fixed $\theta\in [t,T]$, $\theta\leq s\leq T$ and $1\leq i\leq d$, a version of $(D_{\theta}^{i}Y_{s},D_{\theta}^{i}Z_{s})$ is the unique
solution of the following BDSDE:
\begin{eqnarray}\label{DY}
D_{\theta}^{i}Y_{s}&=&\nabla\Phi(X_{T}^{t,x})D_{\theta}^{i}X_{T}^{t,x}+\int_{s}^{T}\Big( \nabla_{x}f(r,X_{r}^{t,x},Y_{r},Z_{r})D_{\theta}^{i}X_{r}^{t,x}\Big)dr\nonumber\\
&+&\int_{s}^{T}\Big( \nabla_{y}f(r,X_{r}^{t,x},Y_{r},Z_{r})D_{\theta}^{i}Y_{r}
+\Sum_{j=1}^{d}\nabla_{z^j}f(r,X_{r}^{t,x},Y_{r},Z_{r})
 D_{\theta}^{i}Z_{r}^j\Big)dr\nonumber\\
&+&\Sum_{n=1}^{l}\int_{s}^{T}\Big( \nabla_{x}g^n(r,X_{r}^{t,x},Y_{r},Z_{r})D_{\theta}^{i}X_{r}^{t,x}
+\nabla_{y}g^n(r,X_{r}^{t,x},Y_{r},Z_{r})D_{\theta}^{i}Y_{r}\Big)\overleftarrow{dB_{r}^{n}}\nonumber\\
&+&\Sum_{n=1}^{l}\int_{s}^{T}\Sum_{j=1}^{d}\Big( \nabla_{z^j}g^n(r,X_{r}^{t,x},Y_{r},Z_{r})
D_{\theta}^{i}Z_{r}^j\Big)\overleftarrow{dB_{r}^n}- \int_{s}^{T}\Sum_{j=1}^{d}D_{\theta}^{i}Z_{r}^jdW_{r}^j,
\end{eqnarray}
where $(z^j)_{1\leq j\leq d}$ denotes the j-th column of the matrix $z$,
$(g^n)_{1\leq n\leq l}$ denotes the n-th column of the matrix g and $B=(B^1,\ldots,B^l)$.
\end{Proposition}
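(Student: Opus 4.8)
The plan is to prove Malliavin differentiability of the BDSDE solution $(Y,Z)$ together with the explicit form of the derivative $(D_\theta Y, D_\theta Z)$ by the standard Picard-iteration-plus-closedness argument from the classical BSDE theory, adapted to accommodate the backward It\^o integral in $\overleftarrow{dB}$. First I would introduce the Picard approximation scheme: set $(Y^0,Z^0)=(0,0)$ and define $(Y^{m+1},Z^{m+1})$ as the solution of the BDSDE obtained by freezing $(Y,Z)$ at level $m$ inside $f$ and $g$. Under Assumptions \textbf{(H1)}--\textbf{(H3)}, the a priori estimate \eqref{apriori2} of Proposition \ref{aprioriest} shows that $(Y^m,Z^m)\to(Y,Z)$ in $\Bc^2([t,T])$, and this contraction is where the condition $0\le\alpha<1$ is essential (the $\alpha^2\|z_1-z_2\|^2$ term in \textbf{(H2)(ii)} must be absorbed).

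Second, I would show by induction on $m$ that each $(Y^m,Z^m)\in\Bc^2([t,T],\D^{1,2})$ and compute $D_\theta Y^m$, $D_\theta Z^m$. The inductive step uses Lemmas \ref{forD12} and \ref{backD12} to differentiate the forward It\^o integral $\int Z^{m}\,dW$ and the backward It\^o integral $\int g(\cdots,Y^m,Z^m)\,\overleftarrow{dB}$ respectively; the chain rule for Malliavin derivatives (valid since $f,g,\Phi\in C^2_b$ by \textbf{(H3)}) then produces exactly the linear BDSDE \eqref{DY} with the level-$m$ data on the right-hand side, together with the forward-derivative processes $D_\theta X$ recalled in the previous subsection. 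The boundary relations in part (i), namely $D_\theta Y_s=D_\theta Z_s=0$ for $s<\theta$, follow from the corresponding vanishing $D_\theta X_s=0$ for $s<\theta$ and from adaptedness, i.e.\ $Y_s,Z_s$ are $\Fc_s^t$-measurable and hence do not depend on increments of $W$ after time $s$.

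Third, I would pass to the limit $m\to\infty$. The key point is to establish uniform-in-$m$ bounds on $\|D Y^m\|$ and $\|D Z^m\|$ so that $(DY^m,DZ^m)$ is Cauchy; this is again an application of the linear a priori estimate \eqref{apriori2} to the BDSDE satisfied by the \emph{differences} $D_\theta Y^{m+1}-D_\theta Y^m$, using the a priori moment bounds \eqref{aprioriDX}--\eqref{aprioriDX4} on the forward derivatives and the boundedness of the derivatives of $f,g,\Phi$. Convergence of $(Y^m,Z^m)$ to $(Y,Z)$ combined with convergence of $(DY^m,DZ^m)$ to some limit, together with the \emph{closedness} of the Malliavin derivative operator $D$, then identifies the limit as $(DY,DZ)$ and shows it solves \eqref{DY}; existence and uniqueness for that linear BDSDE follow from Pardoux--Peng \cite{par94}.

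The main obstacle, and the genuinely new ingredient relative to the standard BSDE case of \cite{par92,kar}, is the treatment of the backward stochastic integral $\int g(\cdots)\,\overleftarrow{dB}$ under Malliavin differentiation with respect to $W$. This is precisely what Lemma \ref{backD12} is designed to handle: because the Malliavin derivative $D_\theta$ is taken in the $W$-direction while $B$ is an independent Brownian motion, differentiating past the backward integral does \emph{not} generate an extra boundary term (contrast the $+U_\theta$ term appearing for the forward integral in Lemma \ref{forD12}), so the derivative simply commutes into the integrand. Care is needed to justify this commutation rigorously inside the Picard scheme and to control the $\overleftarrow{dB}$ terms in the difference estimate, since it is exactly the $\nabla_z g$ coefficient in \eqref{DY} that reintroduces the contraction constraint governed by $\alpha$; the uniform integrability required to invoke closedness of $D$ must be verified with these backward-integral terms present.
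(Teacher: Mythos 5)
Your proposal is correct and follows essentially the same route as the paper's Appendix proof: Picard iteration, induction showing each iterate lies in $\Bc^2([t,T],\mathbb{D}^{1,2})$ via Lemmas \ref{forD12} and \ref{backD12}, passage to the limit using the a priori estimate of Proposition \ref{aprioriest}, and closedness of $D$. The only cosmetic difference is that the paper compares $D_{\theta}Y^{m+1}$ directly with the solution $Y^{\theta}$ of the limiting linear BDSDE (controlling the coefficient mismatch by dominated convergence) rather than showing the sequence of derivatives is Cauchy, but the two are interchangeable here.
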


The second order differentiability in the Malliavin sense of the solution of the BDSDE will be given in Appendix.

\subsection{Representation results for BDSDEs}
In this subsection, we will prove a representation result of $(Z,DZ)$ which will be useful to prove the rate
of convergence of our numerical scheme.
\begin{Proposition}\label{representationZ}
Assume that {\bf (H1)}-{\bf (H3)} hold. Then, for $t\leq s\leq T$, we have
\vspace{-0.25cm}
\begin{multicols}{2}
   \begin{equation}\label{zs1}
     D_{s}Y_{s}^{t,x}=Z_{s}^{t,x},\quad P-a.s.,
\end{equation}
\hspace{-1cm}
\begin{equation}\label{zs2}
\textrm{and}\qquad\|Z^{t,x}\|_{{ \mathbb S}^{2}_{k\times d}([t,T])}^{2}\leq C(1+|x|^{2}).
\end{equation}
\end{multicols}
\end{Proposition}
\begin{proof}
To simplify the notations, we restrict ourselves to the case $k=d=1$.\\
 Notice that for $t\leq s$, we have
\begin{eqnarray*}
Y_{s}^{t,x}=Y_{t}^{t,x}-\int_{t}^{s}f(r,\Sigma_{r}^{t,x})dr - \int_{t}^{s}g(r,\Sigma_{r}^{t,x})
\overleftarrow{dB_{r}}+\int_{t}^{s}Z_{r}^{t,x}dW_{r},
\end{eqnarray*}
where $\Sigma_{r}^{t,x}:=(X_{r}^{t,x},Y_{r}^{t,x},Z_{r}^{t,x})$.\\
It follows from Lemma \ref{forD12} and Lemma \ref{backD12} that, for $t<\theta \leq s$
\begin{eqnarray*}
D_{\theta}Y_{s}^{t,x}&=&Z_{\theta}^{t,x}-\int_{\theta}^{s}\Big( \nabla_{x}f(r,\Sigma_{r}^{t,x})D_{\theta}X_{r}^{t,x}
+ \nabla_{y}f(r,\Sigma_{r}^{t,x})D_{\theta}Y_{r}^{t,x}+\nabla_{z}f(r,\Sigma_{r}^{t,x})
D_{\theta}Z_{r}^{t,x}\Big)dr\\
&-&\int_{\theta}^{s}\Big( \nabla_{x}g(r,\Sigma_{r}^{t,x})
D_{\theta}X_{r}^{t,x}+\nabla_{y}g(r,\Sigma_{r}^{t,x})D_{\theta}Y_{r}^{t,x}+\nabla_{z}g(r,\Sigma_{r}^{t,x})
D_{\theta}Z_{r}^{t,x}\Big)\overleftarrow{dB_{r}}
+ \int_{\theta}^{s}D_{\theta}Z_{r}^{t,x}dW_{r}.
\end{eqnarray*}
Then by taking $\theta=s$, it follows that equality (\ref{zs1}) holds.\\
From Proposition \ref{propderiv1} and inequalities \reff{apriori1} and \reff{aprioriDX}, we deduce that for each $\theta\leq T$
\begin{equation}\label{aprioriDY}
E[\Sup_{t\leq s\leq T}|D_{\theta}Y_s^{t,x}|^2]+E[\int_t^T|D_{\theta}Z_s^{t,x}|^2ds]\leq C (1+|x|^{2}).
\end{equation}
Then, by taking $\theta=s$, we deduce that \reff{zs2} holds.
\end{proof}
\ep\\
\subsection{Path regularity}
In this subsection, we extend the result of Zhang \cite{zha} which concerns the $L^2$-regularity
of the martingale integrand $Z$. Such result is crucial to derive the rate of convergence of our numerical scheme.
We start with the following proposition which gives an upper bound for $$E\Big[\Sup_{r\in[s,u]}|Y_{r}^{t,x}-Y_{s}^{t,x}|^{2} \Big]
 \quad \mbox{and} \quad  E\Big[||Z_{u}^{t,x}-Z_{s}^{t,x}||^{2} \Big], \quad t\leq s\leq u\leq T.$$
\begin{Proposition}\label{cvzts}
Assume that {\bf(H1)}-{\bf(H3)} hold. Then for $t\leq s\leq u\leq T$, we have
\begin{eqnarray}
E\Big[\sup_{r\in[s,u]}|Y_{r}^{t,x}-Y_{s}^{t,x}|^{2} \Big]&\leq& C (1+|x|^{2})|u-s|,\label{yrs}\\
E\Big[||Z_{u}^{t,x}-Z_{s}^{t,x}||^{2} \Big]&\leq& C (1+|x|^{2})|u-s|.\label{zrs}
\end{eqnarray}
\end{Proposition}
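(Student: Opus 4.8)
The plan is to prove the two estimates separately; the regularity of $Z$ will rest on the representation $Z_s=D_sY_s$ from Proposition \ref{representationZ}. For the first estimate, fix $t\le s\le r\le u\le T$, so that the BDSDE \reff{1} gives
$$Y_r-Y_s=-\int_s^r f(\tau,\Sigma_\tau)\,d\tau-\int_s^r g(\tau,\Sigma_\tau)\,\overleftarrow{dB_\tau}+\int_s^r Z_\tau\,dW_\tau,\qquad \Sigma_\tau:=(X_\tau^{t,x},Y_\tau,Z_\tau).$$
Taking the supremum over $r\in[s,u]$ and then the expectation, I would apply the Burkholder--Davis--Gundy inequality to the two stochastic integrals and Cauchy--Schwarz to the Lebesgue integral, obtaining
$$E\Big[\sup_{r\in[s,u]}|Y_r-Y_s|^2\Big]\le C|u-s|\,E\Big[\int_s^u|f(\tau,\Sigma_\tau)|^2\,d\tau\Big]+C\,E\Big[\int_s^u\|g(\tau,\Sigma_\tau)\|^2\,d\tau\Big]+C\,E\Big[\int_s^u\|Z_\tau\|^2\,d\tau\Big].$$
By the linear growth of $f$ and $g$ (a consequence of \textbf{(H2)}), the integrability bounds \reff{integrability} and \reff{apriori1}, and crucially the sup--$L^2$ bound \reff{zs2} on $Z$, which gives $E[\int_s^u\|Z_\tau\|^2d\tau]\le|u-s|\,E[\sup_{t\le\tau\le T}\|Z_\tau\|^2]\le C(1+|x|^2)|u-s|$, every term on the right is bounded by $C(1+|x|^2)|u-s|$. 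This proves \reff{yrs}.

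For \reff{zrs} I would use $Z_s=D_sY_s$ and split
$$Z_u-Z_s=\big(D_uY_u-D_sY_u\big)+\big(D_sY_u-D_sY_s\big)=:(I)+(II).$$
For $(II)$, fix the Malliavin index $s$: by Proposition \ref{propderiv1} the pair $r\mapsto(D_sY_r,D_sZ_r)$ solves the linear BDSDE \reff{DY} on $[s,T]$. Writing $D_sY_u-D_sY_s$ as the associated Lebesgue, backward--It\^o and forward--It\^o integrals over $[s,u]$ and repeating the estimate of the first paragraph — now using the boundedness of $\nabla f$ and $\nabla g$ from \textbf{(H3)}, the a priori bound \reff{aprioriDX} on $D_sX$, and the sup--$L^2$ bound \reff{ddy} on $D_sZ$ (which supplies the factor $|u-s|$ on the $dW$-integral) — I would obtain $E[|D_sY_u-D_sY_s|^2]\le C(1+|x|^4)|u-s|$.

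Term $(I)$ is the heart of the matter and the main obstacle. Both $r\mapsto D_uY_r$ and $r\mapsto D_sY_r$ solve linear BDSDEs of the form \reff{DY} on $[u,T]$, differing only through the Malliavin index entering the forward derivative $D_\theta X$, including in the terminal value $\nabla\Phi(X_T^{t,x})D_\theta X_T^{t,x}$. Hence the difference $r\mapsto(D_uY_r-D_sY_r,\,D_uZ_r-D_sZ_r)$ solves a linear BDSDE with terminal datum $\nabla\Phi(X_T^{t,x})(D_uX_T^{t,x}-D_sX_T^{t,x})$ and inhomogeneities of the form $\nabla_xf\,(D_uX_r^{t,x}-D_sX_r^{t,x})$ and $\nabla_xg\,(D_uX_r^{t,x}-D_sX_r^{t,x})$. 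Applying the a priori stability estimate of Proposition \ref{aprioriest} to this difference — whose applicability hinges on the contraction $\|\nabla_zg\|\le\alpha<1$ coming from \textbf{(H2)(ii)} — and then the forward regularity estimate \reff{aprioriDX3},
$$E\Big[\sup_{u\le r\le T}\|D_uX_r^{t,x}-D_sX_r^{t,x}\|^2\Big]\le C(1+|x|^2)|u-s|,$$
together with the boundedness of $\nabla\Phi,\nabla f,\nabla g$, yields $E[|D_uY_u-D_sY_u|^2]\le C(1+|x|^2)|u-s|$. Adding this to the bound for $(II)$ gives \reff{zrs}, the polynomial growth in $|x|$ being the one supplied by the Malliavin a priori estimates \reff{zs2} and \reff{ddy}. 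The difficulty is concentrated entirely in extracting the factor $|u-s|$ from the comparison of two linear BDSDEs carrying different Malliavin indices: this is where the path regularity of the \emph{forward} flow in its derivative index, namely \reff{aprioriDX3}, and the BDSDE stability estimate that relies essentially on the contraction $\alpha<1$ for $g$ in $z$, are indispensable, whereas the first paragraph and term $(II)$ are routine increments of a single BDSDE controlled by Burkholder--Davis--Gundy and the sup--$L^2$ bounds.
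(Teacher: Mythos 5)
Your proposal is correct and follows essentially the same route as the paper: \reff{yrs} is the increment estimate \reff{apriori2yz} combined with the sup--$L^2$ bound \reff{zs2} on $Z$, and \reff{zrs} uses the identical decomposition $Z_u-Z_s=(D_uY_u-D_sY_u)+(D_sY_u-D_sY_s)$, with the first term controlled by comparing the two linear BDSDEs \reff{DY} via the stability estimate (the paper redoes the generalized It\^o/Gronwall computation that underlies Proposition \ref{aprioriest} by hand, you invoke the proposition directly) together with \reff{aprioriDX3}, and the second term treated as a time increment of a single linear BDSDE. The only discrepancy is cosmetic: your use of \reff{ddy} for term $(II)$ yields $C(1+|x|^{4})|u-s|$ rather than $C(1+|x|^{2})|u-s|$, but you flag this, and the paper's own invocation of the analogue of \reff{zs2} for $(D_sY,D_sZ)$ is subject to the same degree inflation, so this does not affect the validity of the argument.
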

\begin{proof}
To simplify the notations, we restrict ourselves to the case $k=d=l=1$.\\
(i) Plugging inequality \reff{zs2} in the estimate \reff{apriori2yz}, the result \reff{yrs} holds.\\
(ii) From Proposition \ref{representationZ}, we have
\begin{eqnarray}\label{diffz}
E\Big[|Z_{u}^{t,x}-Z_{s}^{t,x}|^{2} \Big]&\leq& C E[|D_{u}Y_{u}^{t,x}-D_{s}Y_{u}^{t,x}|^{2}]+C E[|D_{s}Y_{u}^{t,x}-D_{s}Y_{s}^{t,x}|^{2}].
\end{eqnarray}
From the definition of the BDSDE (\ref{DY}), we have
\begin{eqnarray*}
&&D_{u}Y_{u}^{t,x}-D_{s}Y_{u}^{t,x}=\nabla\Phi(X_{T}^{t,x})(D_{u}X_{T}^{t,x}-D_{s}X_{T}^{t,x})
+\int_{u}^{T}\Big( \nabla_{x}f(r,\Sigma_{r}^{t,x})(D_{u}X_{r}^{t,x}-D_{s}X_{r}^{t,x})\Big)dr\nonumber\\
&+&\int_{u}^{T}\Big( \nabla_{y}f(r,\Sigma_{r}^{t,x})(D_{u}Y_{r}^{t,x}-D_{s}Y_{r}^{t,x})
+\nabla_{z}f(r,\Sigma_{r}^{t,x})(D_{u}Z_{r}^{t,x}-D_{s}Z_{r}^{t,x})\Big)dr\nonumber\\
&+&\int_{u}^{T}\Big( \nabla_{x}g(r,\Sigma_{r}^{t,x})(D_{u}X_{r}^{t,x}-D_{s}X_{r}^{t,x})
+\nabla_{y}g(r,\Sigma_{r}^{t,x})(D_{u}Y_{r}^{t,x}-D_{s}Y_{r}^{t,x})\Big)\overleftarrow{dB_{r}}\nonumber\\
&+&\int_{u}^{T}\Big( \nabla_{z}g(r,\Sigma_{r}^{t,x})
(D_{u}Z_{r}^{t,x}-D_{s}Z_{r}^{t,x})\Big)\overleftarrow{dB_{r}}
-\int_{u}^{T}(D_{u}Z_{r}^{t,x}-D_{s}Z_{r}^{t,x})dW_{r}.
\end{eqnarray*}
Applying the generalized It\^o's formula (see \cite{par94}, Lemma 1.3), we obtain
\begin{eqnarray*}
&&|D_{u}Y_{T}^{t,x}-D_{s}Y_{T}^{t,x}|^2-|D_{u}Y_{u}^{t,x}-D_{s}Y_{u}^{t,x}|^2=\\
&-&2\int_{u}^{T} \nabla_{x}f(r,\Sigma_{r}^{t,x})(D_{u}X_{r}^{t,x}-D_{s}X_{r}^{t,x})(D_{u}Y_{r}^{t,x}-D_{s}Y_{r}^{t,x})dr
-2\int_{u}^{T} \nabla_{y}f(r,\Sigma_{r}^{t,x})(D_{u}Y_{r}^{t,x}-D_{s}Y_{r}^{t,x})^2dr\nonumber\\
&-&2\int_{u}^{T} \nabla_{z}f(r,\Sigma_{r}^{t,x})(D_{u}Z_{r}^{t,x}-D_{s}Z_{r}^{t,x})(D_{u}Y_{r}^{t,x}-D_{s}Y_{r}^{t,x})dr\nonumber\\
&-&2\int_{u}^{T} \nabla_{x}g(r,\Sigma_{r}^{t,x})(D_{u}X_{r}^{t,x}-D_{s}X_{r}^{t,x})(D_{u}Y_{r}^{t,x}-D_{s}Y_{r}^{t,x})
\overleftarrow{dB_{r}}\nonumber\\
&-&2\int_{u}^{T}\nabla_{y}g(r,\Sigma_{r}^{t,x})(D_{u}Y_{r}^{t,x}-D_{s}Y_{r}^{t,x})^2\overleftarrow{dB_{r}}\nonumber\\
&-&2\int_{u}^{T} \nabla_{z}g(r,\Sigma_{r}^{t,x})
(D_{u}Z_{r}^{t,x}-D_{s}Z_{r}^{t,x})(D_{u}Y_{r}^{t,x}-D_{s}Y_{r}^{t,x})\overleftarrow{dB_{r}}\\
&+& 2\int_{u}^{T}(D_{u}Z_{r}^{t,x}-D_{s}Z_{r}^{t,x})(D_{u}Y_{r}^{t,x}-D_{s}Y_{r}^{t,x})dW_{r}\\
&-&\int_{u}^{T} \big|\nabla_{x}g(r,\Sigma_{r}^{t,x})(D_{u}X_{r}^{t,x}-D_{s}X_{r}^{t,x})
+ \nabla_{y}g(r,\Sigma_{r}^{t,x})(D_{u}Y_{r}^{t,x}-D_{s}Y_{r}^{t,x})
+\nabla_{z}g(r,\Sigma_{r}^{t,x})(D_{u}Z_{r}^{t,x}-D_{s}Z_{r}^{t,x}\big|^2dr\\
&+&\int_{u}^{T}|D_{u}Z_{r}^{t,x}-D_{s}Z_{r}^{t,x}|^2dr.
\end{eqnarray*}
From inequalities \reff{aprioriDY} and \reff{aprioriDX}, using the Burkholder-Davis-Gundy's inequality
and Assumption {\bf(H2)}, the stochastic integrals which appear in the last equation disappear when we take the expectation.\\
By Young inequality, we obtain, for $\epsilon '>0$
\begin{eqnarray*}
&&E[|D_{u}Y_{u}^{t,x}-D_{s}Y_{u}^{t,x}|^2]+E[\int_u^T|D_{u}Z_{r}^{t,x}-D_{s}Z_{r}^{t,x}|^2]dr \leq
E[|\nabla\Phi(X_{T}^{t,x})(D_{u}X_{T}^{t,x}-D_{s}X_{T}^{t,x})|^2]\\
&+& 2E[\int_u^T\nabla_{x}f(r,\Sigma_{r}^{t,x})(D_{u}X_{r}^{t,x}-D_{s}X_{r}^{t,x})(D_{u}Y_{r}^{t,x}-D_{s}Y_{r}^{t,x})dr]\\
&+&2E[\int_{u}^{T} \nabla_{y}f(r,\Sigma_{r}^{t,x})(D_{u}Y_{r}^{t,x}-D_{s}Y_{r}^{t,x})^2 dr]\\
&+&2E[\int_{u}^{T}\nabla_{z}f(r,\Sigma_{r}^{t,x})(D_{u}Z_{r}^{t,x}-D_{s}Z_{r}^{t,x})(D_{u}Y_{r}^{t,x}-D_{s}Y_{r}^{t,x})dr]\\
&+&C(1+\frac{1}{\epsilon '})E[\int_{u}^{T} \nabla_{x}g(r,\Sigma_{r}^{t,x})^2|D_{u}X_{r}^{t,x}-D_{s}X_{r}^{t,x}|^2dr]\\
&+&C(1+\frac{1}{\epsilon '})E[\int_{u}^{T} \nabla_{y}g(r,\Sigma_{r}^{t,x})^2|D_{u}Y_{r}^{t,x}-D_{s}Y_{r}^{t,x}|^2dr]\\
&+&(1+\epsilon ')E[\int_{u}^{T} \nabla_{z}g(r,\Sigma_{r}^{t,x})^2|D_{u}Z_{r}^{t,x}-D_{s}Z_{r}^{t,x}|^2dr].
\end{eqnarray*}
Hence by using Assumption {\bf(H2)} and Young inequality, we have for $\epsilon, \epsilon'>0$ and $C>0$,
\begin{eqnarray*}
&&E[|D_{u}Y_{u}^{t,x}-D_{s}Y_{u}^{t,x}|^2]+E[\int_u^T|D_{u}Z_{r}^{t,x}-D_{s}Z_{r}^{t,x}|^2dr]\leq
K^2 E[|D_{u}X_{T}^{t,x}-D_{s}X_{T}^{t,x}|^2] \\
&+& 2K E[\int_u^T |D_{u}X_{r}^{t,x}-D_{s}X_{r}^{t,x}|^2 dr]+4K E[\int_u^T |D_{u}Y_{r}^{t,x}-D_{s}Y_{r}^{t,x}|^2 dr]\\
&+& K\epsilon E[\int_u^T |D_{u}Y_{r}^{t,x}-D_{s}Y_{r}^{t,x}|^2 dr]+\frac{K}{\epsilon}E[\int_u^T |D_{u}Z_{r}^{t,x}-D_{s}Z_{r}^{t,x}|^2 dr]\\
&+& C K^2 (1+\frac{1}{\epsilon '})E[\int_u^T |D_{u}X_{r}^{t,x}-D_{s}X_{r}^{t,x}|^2 dr]+ CK^2(1+\frac{1}{\epsilon '})E[\int_u^T |D_{u}Y_{r}^{t,x}-D_{s}Y_{r}^{t,x}|^2 dr]\\
&+&(1+\epsilon ')\alpha^2 E[\int_u^T |D_{u}Z_{r}^{t,x}-D_{s}Z_{r}^{t,x}|^2 dr].
\end{eqnarray*}
Then, we obtain
\begin{eqnarray*}
&&E[|D_{u}Y_{u}^{t,x}-D_{s}Y_{u}^{t,x}|^2]+E[\int_u^T|D_{u}Z_{r}^{t,x}-D_{s}Z_{r}^{t,x}|^2dr]\leq
K^2 E[|D_{u}X_{T}^{t,x}-D_{s}X_{T}^{t,x}|^2] \\
&+& K(2+K C(1+\frac{1}{\epsilon '}))E[\int_u^T |D_{u}X_{r}^{t,x}-D_{s}X_{r}^{t,x}|^2 dr]\\
&+& (K^2 C(1+\frac{1}{\epsilon '})+(4+\epsilon)K) E[\int_u^T |D_{u}Y_{r}^{t,x}-D_{s}Y_{r}^{t,x}|^2 dr]\\
&+ &((1+\epsilon ')\alpha^2+\frac{K}{\epsilon}) E[\int_u^T |D_{u}Z_{r}^{t,x}-D_{s}Z_{r}^{t,x}|^2 dr].
\end{eqnarray*}
For $\epsilon$ large enough and $\epsilon '$ small enough, we have $(1+\epsilon ')\alpha^2+\frac{K}{\epsilon}<1$.
From inequality \reff{aprioriDX3}, we deduce that
\begin{eqnarray*}
E[|D_{u}Y_{u}^{t,x}-D_{s}Y_{u}^{t,x}|^2]\leq  C\Big( (1+|x|^2)|u-s| + E[\int_u^T |D_{u}Y_{r}^{t,x}-D_{s}Y_{r}^{t,x}|^2 dr]\Big),
\end{eqnarray*}
where $C$ is a positive constant.\\
From Gronwall's lemma we have
\begin{eqnarray}\label{diffdy1}
E[|D_{u}Y_{u}^{t,x}-D_{s}Y_{u}^{t,x}|^2]\leq  C (1+|x|^2)|u-s|.
\end{eqnarray}
Since $(D_{s}Y_{u}^{t,x})_{s\leq u\leq T}$ satisfies the BDSDE (\ref{DY}),
inequalities \reff{apriori2yz}-\reff{zs2} hold for
$(D_{s}Y_{u}^{t,x},D_{s}Z_{u}^{t,x})_{s\leq u\leq T}$ and yield
\begin{eqnarray}\label{diffdy2}
E[|D_{s}Y_{u}^{t,x}-D_{s}Y_{s}^{t,x}|^{2}]\leq C (1+|x|^{2})|u-s|.
\end{eqnarray}
Plugging \reff{diffdy1} and \reff{diffdy2} into \reff{diffz}, we obtain \reff{zrs}.
\end{proof}
\ep\\
\subsection{Application to the scheme's convergence}
The following theorem states the rate of convergence of our numerical scheme.
\begin{thm}\label{ratecvtheo}
Under Assumptions {\bf(H1)}-{\bf(H3)}, there exists a positive constant C
(depending only on $T$, $K$, $\alpha$, $|b(0)|$, $||\sigma(0)||$, $|f(t,0,0,0)|$ and $||g(t,0,0,0)||$) such that
\begin{eqnarray}\label{ratecv}
Error_{N}(Y,Z)\leq C h (1+|x|^{2}).
\end{eqnarray}
\end{thm}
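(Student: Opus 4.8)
The plan is to insert the $L^2$ path-regularity estimate \reff{zrs} of Proposition \ref{cvzts} into the abstract upper bound \reff{convergence} already proved in Theorem \ref{upper-bound}. By \reff{convergence} it is enough to show that the two residual terms
$$I_1:=\sum_{n=0}^{N-1}\int_{t_n}^{t_{n+1}}E[\|Z_s-\bar Z_{t_n}\|^2]\,ds,\qquad I_2:=\sum_{n=0}^{N-1}\int_{t_n}^{t_{n+1}}E[\|Z_s-\bar Z_{t_{n+1}}\|^2]\,ds$$
are both of order $h(1+|x|^2)$. The structural fact I would exploit throughout is that, for each fixed $n$, the variable $\bar Z_{t_n}=\frac1h E_{t_n}[\int_{t_n}^{t_{n+1}}Z_r\,dr]$ realizes the best $L^2$-approximation of $(Z_s)_{s\in[t_n,t_{n+1}]}$ among $\Fc_{t_n}^0$-measurable random variables; indeed $\int_{t_n}^{t_{n+1}}E_{t_n}[Z_s-\bar Z_{t_n}]\,ds=0$ by the very definition of $\bar Z$, so $\bar Z_{t_n}$ is orthogonal to $\Fc_{t_n}^0$-measurable constants, which is the projection content of Lemma \ref{lemme relation Z_set Z_tn}.

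For $I_1$, minimality against the admissible choice $\zeta=Z_{t_n}$ (which is $\Fc_{t_n}^0$-measurable) gives, for each $n$,
$$\int_{t_n}^{t_{n+1}}E[\|Z_s-\bar Z_{t_n}\|^2]\,ds\le\int_{t_n}^{t_{n+1}}E[\|Z_s-Z_{t_n}\|^2]\,ds\le C(1+|x|^2)\int_{t_n}^{t_{n+1}}(s-t_n)\,ds\le C(1+|x|^2)h^2,$$
where the middle inequality is \reff{zrs} with $|s-t_n|\le h$. Summing over the $N$ intervals and using $Nh=T$ yields $I_1\le C(1+|x|^2)h$.

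For $I_2$ I would separate the generic indices $0\le n\le N-2$ from the boundary index $n=N-1$. When $n\le N-2$ I split $Z_s-\bar Z_{t_{n+1}}=(Z_s-Z_{t_{n+1}})+(Z_{t_{n+1}}-\bar Z_{t_{n+1}})$. The first piece is controlled directly by \reff{zrs} since $|s-t_{n+1}|\le h$ for $s\in[t_n,t_{n+1}]$. For the second piece, since $Z_{t_{n+1}}$ is $\Fc_{t_{n+1}}^0$-measurable one may write $Z_{t_{n+1}}-\bar Z_{t_{n+1}}=\frac1h\int_{t_{n+1}}^{t_{n+2}}E_{t_{n+1}}[Z_{t_{n+1}}-Z_r]\,dr$, and conditional Jensen together with \reff{zrs} give
$$E[\|Z_{t_{n+1}}-\bar Z_{t_{n+1}}\|^2]\le\frac1h\int_{t_{n+1}}^{t_{n+2}}E[\|Z_{t_{n+1}}-Z_r\|^2]\,dr\le C(1+|x|^2)h.$$
Hence $E[\|Z_s-\bar Z_{t_{n+1}}\|^2]\le C(1+|x|^2)h$ uniformly in $s\in[t_n,t_{n+1}]$, and integrating then summing over $n\le N-2$ gives an $O(h(1+|x|^2))$ contribution. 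For the boundary index $n=N-1$ one has $\bar Z_{t_N}=0$, so the integrand reduces to $E[\|Z_s\|^2]$, which is bounded by the a priori estimate \reff{zs2}; consequently $\int_{t_{N-1}}^{t_N}E[\|Z_s\|^2]\,ds\le h\,\|Z\|_{{\mathbb S}^2_{k\times d}([t,T])}^2\le C(1+|x|^2)h$.

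Adding the bounds for $I_1$ and $I_2$ to the $Ch(1+|x|^2)$ term already present in \reff{convergence} yields $Error_N(Y,Z)\le Ch(1+|x|^2)$, which is \reff{ratecv}. The only genuinely substantial ingredient is the path regularity \reff{zrs}, established in Proposition \ref{cvzts} through the Malliavin representation $Z_s=D_sY_s$; once it is available the present argument is elementary bookkeeping, the two mild subtleties being the forward term $\bar Z_{t_{n+1}}$, which must be compared with $Z$ on the adjacent interval $[t_{n+1},t_{n+2}]$ rather than on $[t_n,t_{n+1}]$, and the degenerate boundary term at $n=N-1$ where $\bar Z$ vanishes.
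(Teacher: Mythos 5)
Your proof is correct and follows essentially the same route as the paper: both insert the path-regularity estimate \reff{zrs} of Proposition \ref{cvzts} into the upper bound \reff{convergence} of Theorem \ref{upper-bound}, using the $L^2$-projection property of $\bar{Z}_{t_n}$ for the first sum and the a priori bound \reff{zs2} for the terminal interval. Your explicit splitting $Z_s-\bar{Z}_{t_{n+1}}=(Z_s-Z_{t_{n+1}})+(Z_{t_{n+1}}-\bar{Z}_{t_{n+1}})$ with conditional Jensen on the adjacent interval $[t_{n+1},t_{n+2}]$ is exactly the detail the paper compresses into ``similarly'', and it is needed there since the minimality property does not apply directly when the integration interval and the defining interval of $\bar{Z}_{t_{n+1}}$ differ.
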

\begin{proof}
We recall that from Theorem \ref{upper-bound}, we have
\begin{eqnarray*}
Error_{N}(Y,Z) &\leq&  C h(1+|x|^2) + C \sum_{n=0}^{N-1} \int_{t_{n}}^{t_{n+1}}E[||Z_{s}-\bar{Z}_{t_{n}}||^{2}]ds \nonumber\\
&+&  C \sum_{n=0}^{N-1}\! \int_{t_{n}}^{t_{n+1}}\!\!\!\!\!\!\!\!\!\! E[||Z_{s}-\bar{Z}_{t_{n+1}}||^{2}]ds
+ C \sum_{n=0}^{N-1} \int_{t_{n}}^{t_{n+1}}E[|Y_{s}-Y_{t_{n}}|^{2}]ds\nonumber\\
&+& C \sum_{n=0}^{N-1} \int_{t_{n}}^{t_{n+1}}E[|Y_{s}-Y_{t_{n+1}}|^{2}]ds.
\end{eqnarray*}
\textbf{First step:} We deal with the $Y$ part. We have
\begin{eqnarray*}
\sum_{n=0}^{N-1} \int_{t_{n}}^{t_{n+1}}E[|Y_{s}-Y_{t_{n}}|^{2}]ds
\leq \sum_{n=0}^{N-1} \int_{t_{n}}^{t_{n+1}}E[\sup_{t_n\leq s\leq t_{n+1}}|Y_{s}-Y_{t_{n}}|^{2}]ds.
\end{eqnarray*}
From inequality \eqref{yrs} (see Proposition \ref{cvzts}), we obtain
\begin{eqnarray}\label{majoration-Y_s-Y_t_n}
\sum_{n=0}^{N-1} \int_{t_{n}}^{t_{n+1}}E[|Y_{s}-Y_{t_{n}}|^{2}]ds
\leq C h (1+|x|^2).
\end{eqnarray}
Similarly, we get
\begin{eqnarray}\label{majoration-Y_s-Y_t_n+1}
\sum_{n=0}^{N-1} \int_{t_{n}}^{t_{n+1}}E[|Y_{s}-Y_{t_{n+1}}|^{2}]ds
\leq C h (1+|x|^2).
\end{eqnarray}
\textbf{Second step:}
From the definition (\ref{barZdef}), ${\bar Z_{t_{n}}}$ is the best approximation of $(Z_{t})_{t_{n}\leq t < t_{n+1}}$ by $\mathcal{F}_{t_{n}}$-measurable random variable in the following sense
\begin{eqnarray}
E\bigg[\int_{t_{n}}^{t_{n+1}}\|Z_{s}-\bar{Z}_{t_{n}} \|^{2}ds \bigg]=
\inf_{Z_{n}\in L^{2}(\Omega ,\mathcal{F}_{t_n})}E\bigg[\int_{t_{n}}^{t_{n+1}}\|Z_{s}-Z_{n} \|^{2}ds \bigg]\nonumber
\end{eqnarray}
From the estimation \eqref{zrs} (see Proposition \ref{cvzts}), we have
\begin{eqnarray}\label{majoration-Z_s-Z_t_n}
E\Big[||Z_{s}-Z_{t_n}||^{2} \Big]\leq C (1+|x|^{2})|s-t_n|\leq C h (1+|x|^{2}),
\end{eqnarray}
for all $s\in[t_n,t_{n+1}]$ and $0\leq n\leq N-1$ where C depends only on $T$, $K$, $b(0)$, $\sigma(0)$, $f(t,0,0,0)$
and $g(t,0,0,0)$. Then
\begin{eqnarray}\label{sommeZbar}
\Sum_{n=0}^{N-1}E\Big[\int_{t_n}^{t_{n+1}}||Z_{s}-\bar{Z}_{t_{n}}||^{2}ds \Big]\leq C h (1+|x|^{2}).\nonumber
\end{eqnarray}
On the other hand, we have
\begin{eqnarray}\label{majZbartn+1}
 E\Big[\int_{t_n}^{t_{n+1}}\!\!\!||Z_{s}-\bar{Z}_{t_{n+1}}||^{2}ds \Big]\leq
 2 E\Big[\int_{t_n}^{t_{n+1}}\!\!\!||Z_{s}-Z_{t_{n+1}}||^{2}ds \Big]+2 E\Big[\int_{t_n}^{t_{n+1}}\!\!\!||Z_{t_{n+1}}-\bar{Z}_{t_{n+1}}||^{2}ds \Big].
 \end{eqnarray}
 From the definition of $\bar{Z}_{t_{n+1}}$ and the Jensen's inequality, we have
 \begin{eqnarray*}
 E\Big[||Z_{t_{n+1}}-\bar{Z}_{t_{n+1}}||^{2} \Big]
 &=&E\Big[||Z_{t_{n+1}}-\frac{1}{h}E_{t_{n+1}}\Big[\int_{t_{n+1}}^{t_{n+2}}Z_sds\Big]||^{2} \Big]\\
 &=&E\Big[||\frac{1}{h}E_{t_{n+1}}\Big[\int_{t_{n+1}}^{t_{n+2}}(Z_{t_{n+1}}-Z_s)ds\Big]||^{2} \Big]\\
 &\leq&\frac{1}{h^2}E\Big[||\int_{t_{n+1}}^{t_{n+2}}(Z_{t_{n+1}}-Z_s)ds||^{2}\Big].
 \end{eqnarray*}
 By using Cauchy Schwartz inequality, we obtain
 \begin{eqnarray*}
 E\Big[||Z_{t_{n+1}}-\bar{Z}_{t_{n+1}}||^{2} \Big]&\leq& \frac{1}{h^2} E\Big[h \int_{t_{n+1}}^{t_{n+2}}||Z_{t_{n+1}}-Z_s||^{2}ds\Big]\\
 &\leq& \frac{1}{h} \int_{t_{n+1}}^{t_{n+2}}E\Big[||Z_{t_{n+1}}-Z_s||^{2}\Big]ds
  \end{eqnarray*}
Using the estimation \eqref{zrs}, we get
\begin{eqnarray*}
E\Big[||Z_{t_{n+1}}-\bar{Z}_{t_{n+1}}||^{2} \Big]&\leq&\frac{1}{h} \int_{t_{n+1}}^{t_{n+2}}C(1+|x|^2)|s-t_{n+1}|ds\\
&\leq& Ch(1+|x|^2).
\end{eqnarray*}
Inserting the last inequality in (\ref{majZbartn+1}) and using again the estimate \eqref{zrs}, we obtain
\begin{eqnarray}\label{sommeZN}
\Sum_{n=0}^{N-2}E\Big[\int_{t_n}^{t_{n+1}}||Z_{s}-\bar{Z}_{t_{n+1}}||^{2}ds \Big]\leq C h (1+|x|^{2}).\nonumber
\end{eqnarray}
Using the estimation \eqref{zs2}, we obtain
\begin{eqnarray*}
E\Big[\int_{t_{N-1}}^{t_{N}}||Z_{s}||^{2}ds \Big]\leq C h (1+|x|^{2}).\nonumber
\end{eqnarray*}
Then
\begin{eqnarray}\label{majoration-Z_s-Z_t_n+1}
\Sum_{n=0}^{N-1}E\Big[\int_{t_n}^{t_{n+1}}||Z_{s}-\bar{Z}_{t_{n+1}}||^{2}ds \Big]\leq C h (1+|x|^{2}).
\end{eqnarray}
Finally, plugging (\ref{majoration-Y_s-Y_t_n}), (\ref{majoration-Y_s-Y_t_n+1}), (\ref{majoration-Z_s-Z_t_n}) and (\ref{majoration-Z_s-Z_t_n+1})
in (\ref{convergence}) in  Theorem \ref{upper-bound}, we get
\begin{eqnarray*}
Error_{N}(Y,Z)\leq C h (1+|x|^{2}).
\end{eqnarray*}

\end{proof}
\ep\\

\section{Numerical scheme for the weak solution of the SPDE}
\setcounter{equation}{0}
\setcounter{Assumption}{0}
\setcounter{Example}{0}
\setcounter{Theorem}{0}
\setcounter{Proposition}{0}
\setcounter{Corollary}{0}
\setcounter{Lemma}{0}
\setcounter{Definition}{0}
\setcounter{Remark}{0}

Most numerical works on SPDEs are concentrated  on the Euler finite-difference scheme (see   \cite{gyo1}, \cite{gyo2} , \cite{gyo}),
 on finite element method (see  \cite{Walsh}) and also on  spectral Galerkin methods (see \cite{Kloeden} and the references therein).
   Here,  we follow
a  probabilistic method  based on  the Feynman-Kac's formula for the weak solution of  the semilinear SPDE (\ref{SPDE})
based on BSDE approach (see \cite{BMat}, \cite{MS02}).  We consider a weak Sobolev solution of such SPDE in the sense
 that $u$ shall be considered  as a predictable process in some first order Sobolev space.  Therefore, we improve the
  convergence and the rate of convergence  of the $L^2$-norm  error  of such solution by using the convergence results on BDSDEs
   proved in section 4.

\subsection{Weak solution for SPDE}
\label{weakSPDE:subsection}
Since we work on the whole space $\mathbb R^d$, we introduce a weight function $\rho$ satisfying the following conditions :  $ \rho $
  is  a positive  locally integrable function ,   $\frac{1}{\rho}$ is locally integrable and $\int_{\mathbb{R}^{d}}(1+|x|^{2})\rho(x)dx <\infty$. For example, we can take $\rho(x)\!=\!e^{-\frac{x^{2}}{2}}$ or $\rho(x)\!=\!e^{-|x|}$.
As a consequence of $\bf{(H3)}$, we have $\displaystyle{\int_{\mathbb{R}^{d}}|\Phi(x)|^2\rho(x)dx<\infty}$, $\int_{0}^{T}\int_{\mathbb{R}^{d}}|f(t,x,0,0)|^2\rho(x)dxdt<\infty$ and $\int_{0}^{T}\int_{\mathbb{R}^{d}}|g(t,x,0,0)|^2\rho(x)dxdt<\infty$.\\

We denote by  $L^{2}(\mathbb{R}^{d},\rho(x)dx)$
the weighted Hilbert space and we employ the following notation for its scalar product and its norm: $(u,v)_{\rho}=\int_{\mathbb{R}^{d}}u(x)v(x)\rho(x)dx$ and $\|u\|_{\rho}=(u,u)_{\rho}^{\frac{1}{2}}$. Then, we define by
$\mathrm{H}_{\rho} ^{1}(\mathbb{R}^{d})$ the associated weighted first order Dirichlet space and its norm $\|u\|_{\mathrm{H}_{\sigma}^{1}(\mathbb{R}^{d})}=(\|u\|_{\rho}^{2}+\|\nabla u \sigma \|_{\rho}^{2})^{\frac{1}{2}}$. Finally, $(.,.) $ denotes the usual scalar product in $ L^{2}(\mathbb{R}^{d},dx)$.\\
 We also define  $\mathcal{D}:=\mathcal{C}_{c}^{\infty}([0,T])\otimes \mathcal{C}_{c}^{2}(\mathbb{R}^{d})$
the space of test functions where $\mathcal{C}_{c}^{\infty}([0,T])$ denotes the space of all real valued infinite differentiable functions with compact
 support in $[0,T]$ and $\mathcal{C}_{c}^{2}(\mathbb{R}^{d})$ the set of $C^{2}$-functions with compact support in $\mathbb{R}^{d}$.\\
We introduce $\mathcal{H}_{T}$ the space of predictable processes $(u_{t})_{t\geq 0}$ with values in $\mathrm{H}_{\rho}^{1}(\mathbb{R}^{d})$ such that
\begin{equation}\label{defH1}
\begin{split}
\|u\|_{T}=\Big(E\Big[\sup_{0\leq t\leq T}\|u_{t}\|^{2}_{\rho}\Big]+E\Big[\int_{0}^{T}\|\nabla u_{t}\sigma\|_{\rho}^{2}dt\Big]\Big)^{\frac{1}{2}}<\infty. \nonumber
\end{split}
\end{equation}
\begin{df}
We say that $u\in \mathcal{H}_{T}$ is a weak solution of the equation (\ref{SPDE}) associated with the terminal condition $\Phi$
and the coefficients $(f,g)$, if the following relation holds
almost surely, for each $\varphi\in\mathcal{D}$
\begin{eqnarray}\label{edpsss}
& &\int_{t}^{T}(u(s,.),\partial_{s}\varphi(s,.))ds+\int_{t}^{T}\mathcal{E}(u(s,.),\varphi(s,.))ds+(u(t,.),\varphi(t,.))-
(\Phi(.),\varphi(T,.))\\
&=&\int_{t}^{T}(f(s,.,u(s,.),(\nabla u\sigma)(s,.)),\varphi(s,.))ds+\sum_{i=1}^{l}\int_{t}^{T}(g(s,.,u(s,.),(\nabla u\sigma)(s,.)),\varphi(s,.))
\overleftarrow{dB_{s}^{i}},\nonumber
\end{eqnarray}
where
$ \mathcal{E}(u,\varphi)=(Lu,\varphi)=\int_{\mathbb{R}^{d}}((\nabla u\sigma)(\nabla\varphi\sigma)
+\varphi\nabla((\frac{1}{2}\sigma^{*}\nabla\sigma+b)u))(x)dx $ is
the energy  associated to the diffusion operator.
\end{df}
From Bally and Matoussi \cite{BMat}, we  have the following result:
\begin{Theorem}
Under Assumptions ${\bf(H1)-(H3)}$, there exists a unique weak solution $u \in \mathcal{H}_{T}$ of  the SPDE   \reff{SPDE}. Moreover, $u(t,x)=Y^{t,x}_t$ and $Z_t^{t,x}= \nabla u_t \sigma $,  $dt\otimes dx\otimes dP$ a.e. where $(Y_s^{t,x},Z_s^{t,x})_{t\leq s \leq T}$
is the solution of the BDSDE \eqref{BDSDE1}. Furthermore, we have for all $s \in [t,T]$, $u(s,X^{t,x}_s)=Y^{t,x}_s \textrm{ and } (\nabla u \sigma)(s,X^{t,x}_s)= Z_s^{t,x}$ $dt\otimes dx\otimes dP$ a.e.
\end{Theorem}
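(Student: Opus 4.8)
The natural candidate is $u(t,x):=Y^{t,x}_t$, where $(Y^{t,x},Z^{t,x})$ is the solution of the BDSDE \eqref{1}. The plan is to establish, in order: (i) that $u$ is a well-defined element of $\mathcal{H}_T$ whose spatial gradient is represented by $Z$; (ii) that $u$ satisfies the weak formulation \eqref{edpsss}; and (iii) uniqueness. The single device underlying all three steps is the \emph{equivalence of norms} of Barles--Lesigne \cite{BL} and Bally--Matoussi \cite{BMat}: under \textbf{(H1)} and \textbf{(H3)} the coefficients are $C^2_b$, so the stochastic flow $x\mapsto X^{0,x}_s$ is a diffeomorphism with controlled Jacobian, and there are constants $0<c\le C$ with
\begin{equation*}
c\int_{\mathbb{R}^d}|\psi(x)|\,\rho(x)\,dx\;\le\;\int_{\mathbb{R}^d}E\big[|\psi(X^{0,x}_s)|\big]\rho(x)\,dx\;\le\;C\int_{\mathbb{R}^d}|\psi(x)|\,\rho(x)\,dx
\end{equation*}
for every $s\in[0,T]$ and every measurable $\psi$. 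This is what will transfer the pathwise a priori bounds of Section~2 and the regularity bounds \eqref{zs2}, \eqref{ddy} into weighted $L^2(\rho)$-estimates on $u$ and $\nabla u\,\sigma$.

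\textbf{Step (i): regularity and identification of the gradient.} First I would record the Markov/flow identity: by uniqueness of \eqref{1} and the flow property $X^{t,x}_s=X^{r,X^{t,x}_r}_s$ of \eqref{forward}, the processes $(Y^{t,x}_\cdot,Z^{t,x}_\cdot)$ and $(Y^{r,X^{t,x}_r}_\cdot,Z^{r,X^{t,x}_r}_\cdot)$ coincide on $[r,T]$, whence $Y^{t,x}_s=u(s,X^{t,x}_s)$ for $dP\otimes ds$-a.e. $(s,\omega)$. To exhibit $\nabla u\,\sigma$ I would invoke Proposition \ref{representationZ}: using $Z_s=D_sY_s$ from \eqref{eqz}, together with the chain-rule expression $D_\theta Y^{t,x}_s=\nabla_x Y^{t,x}_s\,(\nabla_x X^{t,x}_\theta)^{-1}\sigma(X^{t,x}_\theta)$ valid for $\theta\le s$, and the identity $\nabla_x Y^{t,x}_s=\nabla u(s,X^{t,x}_s)\,\nabla_x X^{t,x}_s$ obtained by differentiating $Y^{t,x}_s=u(s,X^{t,x}_s)$, one finds at $\theta=s$ that
\begin{equation*}
Z^{t,x}_s=(\nabla u\,\sigma)(s,X^{t,x}_s).
\end{equation*}
Applying the equivalence of norms to \eqref{zs2} then yields $E\int_0^T\|\nabla u_s\,\sigma\|_\rho^2\,ds<\infty$, while \eqref{apriori1} controls $E[\sup_s\|u_s\|_\rho^2]$; hence $u\in\mathcal{H}_T$, and the case $s=t$ gives the stated diagonal identities.

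\textbf{Step (ii): weak formulation.} For a test function $\varphi\in\mathcal{D}$ I would apply the generalized (doubly stochastic) It\^o formula of Pardoux--Peng \cite{par94} to the product $\varphi(s,X^{t,x}_s)\,Y^{t,x}_s$ on $[t,T]$, using that $Y^{t,x}$ solves \eqref{1} and $X^{t,x}$ solves \eqref{forward}. The cross-variation term produces $(\nabla\varphi\,\sigma)\cdot Z\,ds$. Integrating against $\rho(x)\,dx$, substituting $Y^{t,x}_s=u(s,X^{t,x}_s)$ and $Z^{t,x}_s=(\nabla u\,\sigma)(s,X^{t,x}_s)$, and rewriting every space integral against the law of $X^{t,x}_s$ as an integral against $\rho\,dx$ via the equivalence of norms, the second-order $L$-terms reorganize by integration by parts into the energy form $\mathcal{E}(u,\varphi)$, the drift yields the $(f,\varphi)$ term, and the backward integral yields the $(g,\varphi)\overleftarrow{dB}$ term, which is exactly \eqref{edpsss}.

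\textbf{Step (iii): uniqueness, and the main obstacle.} For uniqueness I would take two solutions $u^1,u^2\in\mathcal{H}_T$, write the equation satisfied by $w:=u^1-u^2$, test it against $w$ itself in an $L^2(\rho)$ energy computation, and bound the resulting quadratic terms by the Lipschitz estimates \textbf{(H2)}; the decisive point is that the contraction constant $\alpha<1$ in \textbf{(H2)(ii)} lets the $\|\nabla w\,\sigma\|_\rho^2$ contribution coming from $g$ be absorbed into the left-hand side, after which Gronwall's lemma forces $w\equiv0$. The principal difficulty lies in Step (ii): it is not the It\^o computation but the passage between path space and state space that is delicate, since $Z$ is defined only $dt$-almost everywhere and $u$ is a priori only weakly differentiable, so every manipulation of $\nabla u\,\sigma$ must be justified through the equivalence of norms and a density argument over $\mathcal{D}$ rather than pointwise.
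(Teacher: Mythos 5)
First, a point of comparison: the paper does not prove this theorem at all --- it is imported verbatim from Bally and Matoussi \cite{BMat} --- so your sketch can only be measured against that reference. Your overall strategy (define $u(t,x):=Y^{t,x}_t$, use the flow/Markov property to get $u(s,X^{t,x}_s)=Y^{t,x}_s$, and transfer path-space estimates into weighted $L^2(\rho)$ bounds via the equivalence of norms) is indeed the strategy of \cite{BMat}. However, two of your steps have genuine gaps. In Step (i) you identify $Z^{t,x}_s=(\nabla u\,\sigma)(s,X^{t,x}_s)$ by differentiating $Y^{t,x}_s=u(s,X^{t,x}_s)$ in $x$ and combining with $Z_s=D_sY_s$. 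This presupposes that $u$ is classically differentiable in $x$, which \textbf{(H3)} does not provide: the paper's own remark in Section 5 points out that $C^{3}_b$ coefficients are needed for $Y^{t,x}_t$ to be the classical solution, and under $C^{2}_b$ one only has the weak Sobolev solution. In \cite{BMat} the identity $Z=\nabla u\,\sigma$ is not obtained by a pointwise chain rule but by mollifying $(\Phi,f,g)$, applying Pardoux--Peng to obtain classical solutions $u^n$ with $Z^n=\nabla u^n\sigma$, and passing to the limit in $\mathcal{H}_T$ using the equivalence of norms and the a priori estimates; that approximation layer is missing from your argument and cannot be dispensed with.

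Second, in Step (ii) you propose to ``rewrite every space integral against the law of $X^{t,x}_s$ as an integral against $\rho\,dx$ via the equivalence of norms.'' The equivalence of norms \reff{nequiv} is a two-sided inequality, not a change of variables: it cannot convert $\int_{\mathbb{R}^d} E[\psi(X^{t,x}_s)]\rho(x)\,dx$ into an integral of $\psi$ itself, and the weak formulation \reff{edpsss} is stated against the unweighted Lebesgue product $(\cdot,\cdot)$. To land on \reff{edpsss} one needs either the exact change of variables through the Jacobian of the inverse stochastic flow (the ``stochastic flow technics'' the paper alludes to in its introduction) or, again, a passage to the limit in the weak formulations satisfied by the smooth approximations $u^n$. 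Your uniqueness step is the one place where you diverge from \cite{BMat} in a plausibly workable way: a direct $L^2(\rho)$ energy estimate absorbing the $g$-contribution thanks to $\alpha^2<1$ is a legitimate alternative to the reference's route (which composes an arbitrary weak solution with the flow, shows it solves the BDSDE, and invokes uniqueness of the latter), though you would still need to justify testing the equation against $w$ itself by a density argument over $\mathcal{D}$.
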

\subsection{Rate of convergence for the weak solution of  SPDEs}
Our aim is to approximate the random field $(u_t(x))_{0\leq t\leq T}$ for all $x\in\mathbb{R}^d$.
We recall that the continuous approximation of the solution of BDSDE (\ref{1}) is given by:
\begin{equation}\label{2N}
Y_s^{N,t,x}:=Y_{t_{n+1}}^{N,t,x} +  \!\displaystyle{\int_{s}^{t_{n+1}}\!\!\!\!\!\!\!\!f(t_{n},\Theta_{n}^{N,t,x})du}
+\!\displaystyle{\int_{s}^{t_{n+1}}\!\!\!\!\!\!\!\!g(t_{n+1},\Theta_{n+1}^{N,t,x})\overleftarrow{dB_{u}}}
-\!\displaystyle{\int_{s}^{t_{n+1}}\!\!\!\!\!\!Z_{u}^{N,t,x} dW_u},\textrm{ } t_{n}\leq s < t_{n+1}.
\end{equation}
where
\begin{eqnarray*}
\Theta_{n}^{N,t,x}:=(X_{t_{n}}^{N,t,x},Y_{t_{n}}^{N,t,x},Z_{t_{n}}^{N,t,x}), \textrm{for all } n=0,\ldots,N.
\end{eqnarray*}
We define $n_t=\inf\{n,n=0,...N, \textrm{ such that } t\leq t_n\}\wedge N$.
We recall that the square error of the discrete time approximation is given by
\begin{eqnarray*}
Error_{N}(Y^{t,x},Z^{t,x}):=\sup_{t\leq s \leq T}E[|Y_{s}^{t,x}-Y_{s}^{N,t,x}|^{2}] +\sum_{n=n_t}^{N-1}E[\int_{t_n}^{t_{n+1}}||Z_{s}^{t,x}-Z_{s}^{N,t,x}||^{2}ds],
\end{eqnarray*}
 We recall that $u(t,x)=Y^{t,x}_t$ and $v(t,x)=Z_t^{t,x}$ $dt\otimes dx\otimes dP$ a.e. We define the process $(u_s^N,v_s^N)_{t\leq s\leq T}$,
the numerical approximation of the SPDE  \eqref{SPDE} as follows:
\begin{eqnarray}\label{numspde}
u_{s}^{N}(x):=Y_{s}^{N,s,x} \textrm{ and } v_{s}^{N}(x):=Z_{s}^{N,s,x}.
\end{eqnarray}
We define the square error between the solution of the SPDE and the numerical scheme as follows:
\begin{eqnarray}\label{Error(u,v)}
Error_{N}(u,v)&:=& \sup_{0\leq s \leq T}E[\int_{\mathbb{R}^{d}}|u_{s}^{N}(x)-u(s, x)|^{2}\rho(x)dx]\nonumber\\
&+&\sum_{n=0}^{N-1}E[\int_{\mathbb{R}^{d}}\int_{t_{n}}^{t_{n+1}}\|v_{s}^{N}(x)-v(s,x)\|^{2}ds\rho(x)dx].
\end{eqnarray}
Note that the error $Error_{N}(u,v)$ is defined by integrating over the whole domain the error $Error_{N}(Y^{t,x},Z^{t,x})$
where $(Y^{t,x},Z^{t,x})$ is the solution of the associated BDSDE.\\
The following theorem shows the convergence of the numerical scheme \reff{numspde}.
\begin{thm}
Assume that {\bf (H1)}-{\bf (H3)} hold. Then, there exists a positive constant C
(depending only on $T$, $K$, $\alpha$, $|b(0)|$, $||\sigma(0)||$, $|f(t,0,0,0)|$ and $||g(t,0,0,0)||$) such that
\begin{eqnarray}\label{ratecv-uv}
Error_{N}(u,v)\leq Ch.
\end{eqnarray}
\end{thm}
\begin{proof}
We have
\begin{eqnarray*}
E[\int_{\mathbb{R}^{d}}|u_{s}^{N}(x)-u(s, x)|^{2}\rho(x)dx]
&=&E[\int_{\mathbb{R}^{d}}|Y_{s}^{N,s,x}-Y_s^{s,x}|^{2}\rho(x)dx]\\
&\leq&\int_{\mathbb{R}^{d}}\sup_{s\leq u \leq T}E[|Y_{u}^{N,s,x}-Y_u^{s,x}|^{2}]\rho(x)dx
\end{eqnarray*}
From Theorem \ref{ratecvtheo}, we get
\begin{eqnarray*}
\sup_{0\leq s \leq T}E[\int_{\mathbb{R}^{d}}|u_{s}^{N}(x)-u(s, x)|^{2}\rho(x)dx]
\leq Ch\int_{\mathbb{R}^{d}}(1+|x|^2)\rho(x)dx
\leq Ch
\end{eqnarray*}
For the $Z$ part, we have
\begin{eqnarray*}
\sum_{n=0}^{N-1}E[\int_{\mathbb{R}^{d}}\int_{t_{n}}^{t_{n+1}}\|v_{s}^{N}(x)-v(s,x)\|^{2}ds\rho(x)dx]
=\sum_{n=0}^{N-1}E[\int_{\mathbb{R}^{d}}\int_{t_{n}}^{t_{n+1}}\|Z_{s}^{N,s,x}-Z_{s}^{s,x}\|^{2}ds\rho(x)dx].
\end{eqnarray*}
From Theorem \ref{ratecvtheo}, we get
\begin{eqnarray*}
\sum_{n=0}^{N-1}E[\int_{\mathbb{R}^{d}}\int_{t_{n}}^{t_{n+1}}\|v_{s}^{N}(x)-v(s,x)\|^{2}ds\rho(x)dx]
\leq Ch\int_{\mathbb{R}^{d}}(1+|x|^{2})\rho(x)dx\leq Ch,
\end{eqnarray*}
and then (\ref{ratecv-uv}) holds.
\end{proof}
\ep\\

\section{Implementation and numerical tests}
In this part, we are interested in implementing our numerical scheme. Our aim is only to demonstrate empirically
its convergence. We leave for future research the numerical analysis of the fully implementable algorithm.
\subsection{Notations and algorithm}
We use a path-dependent algorithm, for every fixed path of the brownian motion $B$, we approximate by a regression method the
 solution of the associated PDE. Then, we replace the conditional expectations which appear in (\ref{Yn}) and (\ref{Zn}) by
  $L^{2}(\Omega,\mathcal{P})$ projections on the function basis approximating $L^{2}(\Omega,\mathcal{F}_{t_{n}})$.
   We compute $Z_{t_{n}}^{N}$ in an explicit manner  and  $Y_{t_{n}}^{N}$ in a implicit way by using I Picard iterations where
   I is a natural number. Actually, we proceed as in \cite{gob}, except that in our case the solutions $Y_{t_{n}}^{N}$ and $Z_{t_{n}}^{N}$  are measurable functions of $(X^{N}_{t_{n}},(\Delta B_{i})_{n\leq i \leq N-1} )$. So, each solution given by our algorithm depends on the fixed path of B.

\subsubsection{Numerical scheme}
We take $k=d=1$ i.e. $W$ and $B$ are one dimensional Brownian motions.
For each fixed path of $B$, the solution of (\ref{forward})-(\ref{1}) is approximated by $(Y^{N},Z^{N})$ defined by (\ref{Yn})-(\ref{Zn})\\
We stress that at each discretization time, the solution of the algorithm depends on the fixed path of the brownian motion $B$.
\subsubsection{Vector spaces of functions}
At every $t_{n}$, we select 2 deterministic functions bases
$(p_{i,n}(.))_{i\in\{0,1\}}$ and we look for approximations of $Y_{t_{n}}^{N}$ and $Z_{t_{n}}^{N}$
which will be denoted respectively by $y_{n}^{N}$ and $z_{n}^{N}$, in the vector space $(P_{i,n}(.))_{i\in\{0,1\}}$
 spanned by the basis
$p_{0,n}(.)$ and $p_{1,n}(.)$.
 Each basis $p_{i,n}(.)$ is considered as a vector of functions of dimension $L_{i,n}$.
In other words, $P_{i,n}(.)=\{ \alpha .p_{i,n}(.), \alpha \in \mathbb{R} ^{L_{i,n}} \}$.\\
As an example, we cite the hypercube basis $(\textbf{HC})$ used in \cite{gob}. In this case, $p_{i,n}(.)$ does not depend nor on $i$ neither on $n$ and its dimension is simply denoted by $L$.
 A domain $D\!\subset\!\mathbb{R}$ centered on $X_{0}\!=\!x$, that is $D=(x-a,x+a]$, can be partitioned on small hypercubes of edge
  $\delta$. Then, $D\!=\!\bigcup_{i_{1},\ldots,i_{d}}\!D_{i_{1,\ldots,i_{d}}}$ where
   $D_{i_{1},\ldots,i_{d}}\!=\!(x-a+i_{1}\delta,x-a+(i_{1}+1)\delta]\times\ldots\times (x-a+i_{d}\delta,x-a+(i_{d}+1)\delta]$.
  Finally we define $p_{i,n}(.)$ as the indicator functions of this set of hypercubes.

\subsubsection{Monte Carlo simulations}
To compute the projection coefficients $\alpha$, we will use M independent Monte Carlo simulations
of $X\!_{\!t_{n}\!}\!^{N}$ and $\!\Delta W_{\!n\!}$ which will be respectively denoted by
 $X\!_{\!t_{n}\!}^{N,m}$ and $\Delta \!W_{n}^{m}$, $\!m\!=\!1\!,\ldots,\!M$.

\subsubsection{Description of the algorithm}
$\rightarrow$ Initialization: For $n=N$, we set $(y_{N}^{N,m,I})=(\Phi(X^{N,m}_{t_{N}}))$ and $(z_{N}^{N,m})=0$ .\\
$\rightarrow$ Iteration: For $n=N-1,\ldots,0$:\\
$\bullet$ We approximate (\ref{Zn}) by computing
\begin{eqnarray*}
\alpha^{M}_{1,n}&=&\mathop{\rm arginf}\limits_ {\alpha} \frac{1}{M}\sum_{m=1}^{M}\!\Big|y_{n+1}^{N,M,I}(\!X^{N,m}_{t_{n+1}})\!\frac{\!\Delta W_{n}^{m}\!}{h}\\
&\!+\!&\!g\Big(\!X^{N,m}_{t_{n+1}}\!,\!y_{n+1}^{N,M,I}\!(\!X^{N,m}_{t_{n+1}}\!),z_{n+1}^{N,M}\!(\!X^{N,m}_{t_{n+1}}\!)\!\Big)\!\frac{\Delta B_{n}\Delta \!W_{n}^{m}}{h}
-\alpha.p_{1,n}(X_{t_n}^{N,M})\Big|^{2}.
\end{eqnarray*}
Then we set $z_{n}^{N,M}(.)=(\alpha^{M}_{1,n}.p_{1,n}(.))$. \\
$\bullet$ We use $I$ Picard iterations to obtain an approximation of $Y_{t_{n}}$ in (\ref{Yn}):\\
$\cdot$ For $i=0$: $\alpha^{M,0}_{0,n}=0$.\\
$\cdot$ For $i=1,\ldots,I$: We approximate (\ref{Yn}) by calculating $\alpha^{M,i}_{0,n}$ as the minimizer of:
\begin{eqnarray*}
\!\frac{1}{M}\sum_{m=1}^{M}\!\Big|y_{n+1}^{N,M,I}(X^{N,m}_{t_{n+1}})
\!+hf\!\Big(\!X^{N,m}_{t_{n}},\!y_{n}^{N,M,i-1}(X^{N,m}_{t_{n}}\!)\!,\!z_{n}^{N,M}\!(\!X^{N,m}_{t_{n}}\!)\!\Big)\\
+g\!\Big(\!X^{N,m}_{t_{n+1}}\!,\!y_{n+1}^{N,M,I}\!(\!X^{N,m}_{t_{n+1}}\!)\!,\!z^{N,M}_{n+1}\!(\!X^{N,m}_{t_{n+1}}\!)\!\Big)\!\Delta B_{n}-\!\alpha.p_{0,n}(X_{t_n}^{N,M})\Big|^{2}.
\end{eqnarray*}
Finally, we define $y_{n}^{N,M,I}(.)$ as:
\begin{eqnarray*}
y_{n}^{N,M,I}(.)=(\alpha^{M,I}_{0,n}.p_{0,n}(.)).
\end{eqnarray*}
\subsubsection{Function bases}
We use the basis ($\textbf{HC}$) defined above. So we set:
\begin{eqnarray*}
d_{1}=\min_{n,m}X^{m}_{t_{n}},\quad d_{2}=\max_{n,m}X^{m}_{t_{n}} \textrm{ and } L=\frac{d_{2}-d_{1}}{\delta}
\end{eqnarray*}
where $\delta$ is the edge of the hypercubes $(D_{j})_{1\leq j\leq L}$ defined by $D_{j}=\Big[d+(j-1)\delta,d+j\delta\Big),\forall j$.\\
At each time $t_{n}$, we set
\begin{eqnarray*}
1_{D_{j}}(X^{N,m}_{t_{n}})=1_{[d+(j-1)\delta,d+j\delta)} (X^{N,m}_{t_{n}}),j=1,\ldots,L
\end{eqnarray*}
and
\begin{eqnarray*}
\!(\!p^{m}_{i,n}(.)\!)\!=\!\Big\{\!\sqrt{\frac{M}{card(D_{j})}}\!1_{D_{j}}\!(\!X^{N,m}_{t_{n}}\!)\!,\!1\! \leq \!j\!\leq\! L\Big\},i=0,1,
\end{eqnarray*}
where $Card(D_{j})$ denotes the number of simulations of $X^{N}_{t_{n}}$ which are in the cube $D_{j}$.\\
This system is orthonormal with respect to the empirical scalar product defined by
\begin{eqnarray*}
<\psi_{1},\psi_{2}>_{n,M}:=\frac{1}{M}\!\sum_{m=1}^{M}\!\psi_{1}\!(\!X^{N,m}_{t_{n}}\!) \psi_{2}\!(X^{N,m}_{t_{n}}\!).
\end{eqnarray*}
In this case, the solutions of our least squares problems are given by:
\begin{eqnarray*}
\alpha^{M}_{1,n}&=&\frac{1}{M}\sum_{m=1}^{M} p_{1,n}(X^{N,m}_{t_{n}}) \Big\{ y_{n+1}^{N,M,I}(X^{N,m}_{t_{n+1}}) \frac{\Delta W^{m}_{n}}{h}\\
&+&g\Big(X^{N,m}_{t_{n+1}},y_{n+1}^{N,M,I}(X^{N,m}_{t_{n+1}}),z^{N,M,}_{n+1}(X^{N,m}_{t_{n+1}})\Big)\frac{\Delta B_{n}\Delta W^{m}_{n}}{h}\Big\},\\
\alpha^{M,i}_{0,n}&=&\frac{1}{M}\sum_{m=1}^{M} p_{0,n}(X^{N,m}_{t_{n}}) \Big\{y_{n+1}^{N,M,I}(X^{N,m}_{t_{n+1}})+ h f\Big(X_{t_{n}}^{N,m},y_{n}^{N,M,i-1}(X^{N,m}_{t_{n}}),z_{n}^{N,M}(X^{N,m}_{t_{n}})\Big)\\
&+&g\Big(X^{N,m}_{t_{n+1}},y_{n+1}^{N,M,I}(X^{N,m}_{t_{n+1}}),z^{N,M}_{n+1}(X^{N,m}_{t_{n+1}})\Big)\Delta B_{n}\Big\}.
\end{eqnarray*}
\begin{Remark}
We note that for each value of $M$, $N$ and $\delta$, we launch the algorithm $50$ times and we denote by $(Y_{0,m'}^{0,x,N,M,I})_{1\leq m' \leq50}$ the set of collected values. Then we calculate the empirical mean $\overline{Y}_{0}^{0,x,N,M,I}$ and the empirical standard deviation $\sigma^{N,M,I}$defined by:
\begin{equation}\label{approxY0}
\overline{Y}_{0}^{0,x,N,M,I}\!=\frac{1}{50}\sum_{m'=1}^{50}\!Y_{0,m'}^{0,x,N,M,I} \textrm{ and } \sigma^{N,M,I}\!=\!\sqrt{\frac{1}{49}\sum_{m'=1}^{50}|Y_{0,m'}^{0,x,N,M,I}\!-\!\overline{Y}_{0}^{0,x,N,M,I}|^{2}}.
\end{equation}
We also note before starting the numerical examples that our algorithm converges after at most three Picard iterations.
Finally, we stress that (\ref{approxY0}) gives us an approximation of $u(0,x)$ the solution of the SPDE (\ref{SPDE}) at time
 $t=0$ given the path of $B$.
\end{Remark}
\subsection{Examples}
\subsubsection{Case when $f$ and $g$ are linear in $y$ and independent of $z$}
\begin{eqnarray*}
\begin{cases}
&dX_{t}=X_{t}(\mu dt+\sigma dW_t),\\
&\Phi(x)=-x+K,\textrm{ }f(y)=a_{0}y,\textrm{ } g(y)=b_{0}y
\end{cases}
\end{eqnarray*}
and we set $K=115$, $r=0.01$, $R=0.06$, $X_{0}=100$, $\mu=0.05$, $\sigma=0.2$, $T=0.25$, $d_{1}=60$, $d_{2}=200$, $a_{0}$ and $b_{0}$ are fixed constants.\\
Let $Y_{explicit}$ be the solution of our BDSDE in this particular case. By the integration by parts formula, we get
\begin{eqnarray*}
Y_{t,explicit}^{t,x}=E[\Phi(X^{t,x}_{T})e^{a_{0}(T-t)+b_{0}(B_{T}-B_{t})-\frac{1}{2}b_{0}^{2}(T-t)}/\mathcal{F}_{t,T}^{B}].
\end{eqnarray*}
At t=0, we have
\begin{eqnarray*}
Y_{0,explicit}^{0,x}&=&E[\Phi(X^{0,x}_{T})e^{(a_{0}-\frac{1}{2}b_{0}^{2})T+b_{0}B_{T}}/\mathcal{F}_{0,T}^{B}]\\
&=&e^{(a_{0}-\frac{1}{2}b_{0}^{2})T+b_{0}B_{T}}E[\Phi(X^{0,x}_{T})]\\
&=&e^{(a_{0}-\frac{1}{2}b_{0}^{2})T+b_{0}B_{T}}(K-xe^{\mu T}).
\end{eqnarray*}
Then, we define $\overline{Y}_{0}^{0,x,N,M,I}$ as the numerical approximation of the solution of the BDSDE in this case
 (computed by our algorithm) and $\sigma^{N,M,I}$ as its standard deviation.


For $a_{0}=0.5$, $b_0=0.5$ and  $\delta=1$
\begin{center}
N=20, $Y_{explicit}^{0,x}=13.724$
\begin{tabular}{|r|c|c|c|c|}

\hline
$M$ & $\overline{Y}_{0}^{0,x,N,M,I}(\sigma^{N,M,I})$ & $\frac{|Y_{explicit}^{0,x}-\overline{Y}_{0}^{0,x,N,M,I}|}{Y_{explicit}^{0,x}}$ \\

\hline\
   100& 13.911(1.178) & 0.013\\

\hline
  1000& 13.793(0.309) & 0.004\\

\hline
  5000& 13.848(0.117) & 0.009\\

\hline
 10000& 13.856(0.091) & 0.009\\

\hline

\end{tabular}
\\
\end{center}

For $a_{0}=0.5$, $b_{0}=0.5$ and  $\delta=0.5$
\begin{center}
N=30, $Y_{explicit}^{0,x}=14.115$
\begin{tabular}{|r|c|c|c|c|}

\hline
$M$ & $\overline{Y}_{0}^{0,x,N,M,I}(\sigma^{N,M,I})$ & $\frac{|Y_{explicit}^{0,x}-\overline{Y}_{0}^{0,x,N,M,I}|}{Y_{explicit}^{0,x}}$ \\

\hline\
   100& 14.245(1.045) & 0.009  \\

\hline
  1000& 14.194(0.337) & 0.005\\

\hline
  5000& 14.235(0.129) & 0.008\\

\hline
  10000& 14.263(0.101) &0.01\\
\hline
\end{tabular}

\end{center}
%
%
%
%
%
%
%
%
%
%
%
%
%
%
%
%
%
In the linear case we have a benchmark. We see that in the maturity the numerical approximation of the
 BDSDE's solution is closed to the exact solution. We also note that the bias is constant depending on the number of simulation.

\subsubsection{Comparison of numerical approximations of the solutions of the FBDSDE and the FBSDE: the general case}
Now we set
\begin{eqnarray*}
\begin{cases}
&\Phi(x)=-x+K,\\
&f(t,x,y,z)=-\theta z-ry+(y-\frac{z}{\sigma})^{-}(R-r),\\
&g(t,x,y,z)=0.1z+0.5y+log(x)
\end{cases}
\end{eqnarray*}
The associated nonlinear SPDE is given by:
\begin{equation*}
\begin{split}
 du_t (x) +  \big(Lu_t (x)
 \;   + &  f(t,x,u_t (x),\nabla u_t \sigma  (x)) \big) \, dt  +  g(t,x,u_t(x),\nabla u_t \sigma (x))\cdot \overleftarrow{dB}_t  = 0, \,\
\end{split}
\end{equation*}
where
\begin{eqnarray*}
Lu_t (x)=\sigma^2 x^2\frac{\partial^2}{\partial x^2}u_t(x)+\mu x\frac{\partial}{\partial x}u_t(x).
\end{eqnarray*}
We set $\theta=(\mu-r)/\sigma$, $K=115$, $X_{0}=100$, $\mu=0.05$, $\sigma=0.2$, $r=0.01$, $R=0.06$, $\delta=1$, $N=20$, $T=0.25$
 and we fix $d_{1}=60$ and $d_{2}=200$ as in \cite{gobet1}.
The function g is sufficiently regular and Lipschitz on $[60,200]\times\mathbb{R}\times\mathbb{R}$ and could be extended to regular
 Lipschitz function on $\mathbb{R}^3$. In this case, Assumptions (\textbf{H1}), (\textbf{H2}) and (\textbf{H3})(\textbf{i})
 are satisfied. (\textbf{H3})(\textbf{ii}) is not satisfied because $f$ is not differentiable.\\
We compare the numerical solution of our BDSDE (noted again $\overline{Y}_{t}^{t,x,N,M,I}=u_t(X_0)$)  and the BSDE's one (noted here by $\overline{Y}_{t,BSDE}^{0,x,N,M}$ ), without $g$ and $B$.\\
When $t$ is close to maturity
%
%
%
%
%
%
%
%
%
%
%
%

\begin{center}
\begin{tabular}{|r|c|c|c|}

\hline
$M$ & $\overline{Y}_{t_{15},BSDE}^{0,x,N,M}(\sigma^{N,M})$ & $u_{t_{15}}(X_0)=\overline{Y}_{t_{15}}^{0,x,N,M,I}(\sigma^{N,M,I})$ \\

\hline\
  128& 14.168(0.905)& 17.894(1.096) \\

\hline
  512& 14.113(0.388)& 17.774(0.429) \\

\hline

  2048& 13.988(0.226)& 17.607(0.270)\\

\hline

  8192& 13.985(0.093)& 17.623(0.104)\\

\hline

  32768& 13.994(0.055)& 17.627(0.064)\\

\hline

\end{tabular}

\end{center}

When $t=0$
\begin{center}
\begin{tabular}{|r|c|c|c|}

\hline
$M$ & $\overline{Y}_{0,BSDE}^{0,x,N,M}(\sigma^{N,M})$ & $u_{0}(X_0)=\overline{Y}_{0}^{0,x,N,M,I}(\sigma^{N,M,I})$ \\

\hline\
  128& 15.431(1.005)& 13.571(1.146)
 \\

\hline
  512& 15.029(0.428)& 13.173(0.500) \\

\hline

  2048& 14.763(0.243)& 12.885(0.280)\\

\hline

  8192& 14.718(0.098)& 12.825(0.106)\\

\hline

  32768& 14.715(0.060)& 12.804(0.064)\\

\hline

\end{tabular}

\end{center}
  We see the convergence of the BDSDE's solution when we increase the number of simulations $M$.\\

In figure \ref{figure1}, we examine the convergence of our scheme for five different path of the Brownian $B$.
 We fix all the parameters
($\delta=1 \textrm{ and }M=2000$ ) and we draw the map of the BDSDE's solution with respect to the number of time discretization
 steps $N$.
\begin{center}
\begin{figure}[!h]
\includegraphics[width=15.5cm]{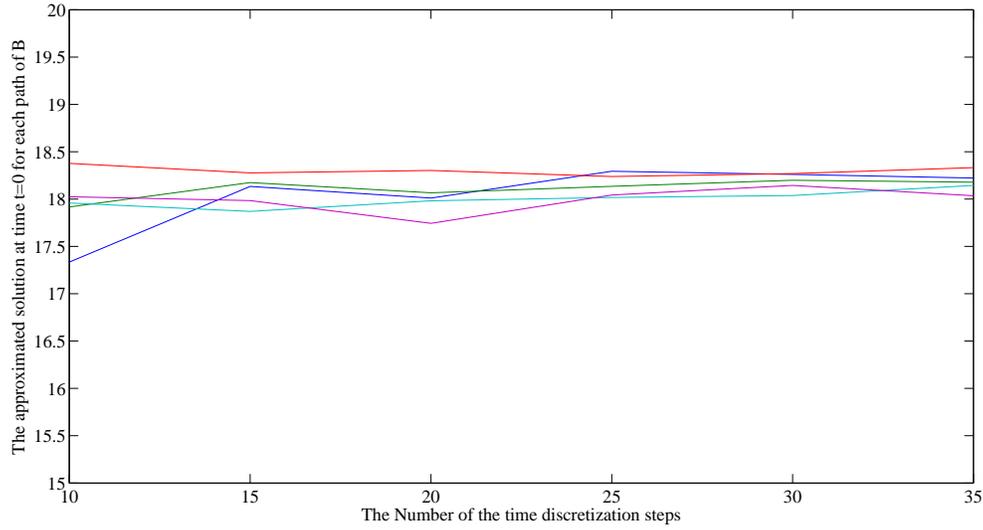}
\caption{The BDSDE's solution with respect to the number of time discretization steps for five different paths of B. The figure is obtained for $M=2000$ and $\delta=1$. }\label{figure1}
\end{figure}
\end{center}
%
\newpage
We see on Figure \ref{figure2} the impact of the function $g$ on the solution; we variate $N$, $M$ and $\delta$ as in \cite{gob},
 by taking these quantities as follows: First we fix $d_{1}=40$ and $d_{2}=180$ (which means that  $x \in [d_{1},d_{2}]=[40,180]$
  and in this case our assumptions (\textbf{H1})-(\textbf{H3}) are satisfied). Let $j \in \mathbb{N}$,
  we take $N=2(\sqrt{2})^{(j-1)}$, $M=2(\sqrt{2})^{3(j-1)}$ and $\delta=50/(\sqrt{2})^{(j-1)}$.
  Then, we draw the map of each solution at $t=0$ with respect to j.
\begin{center}
\begin{figure}[!h]
\includegraphics[width=15.5cm]{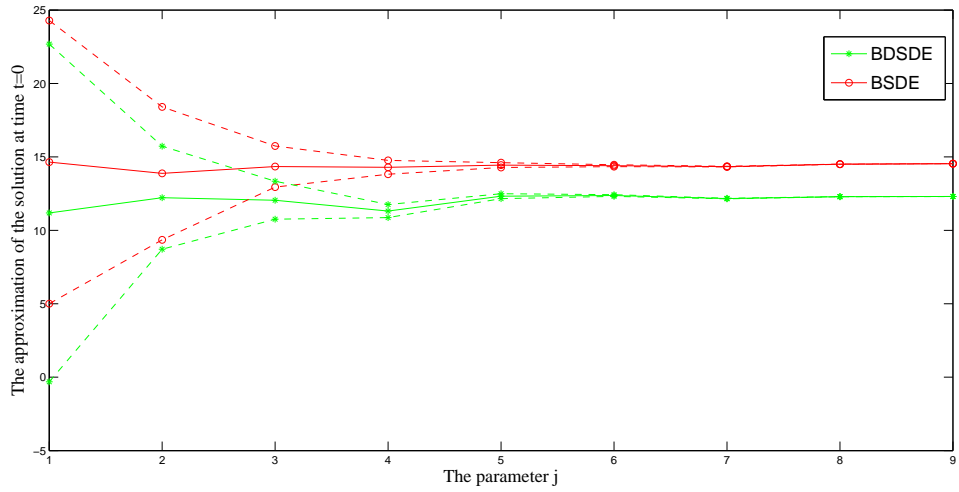}
\caption{Comparison of the BSDE's solution and the BDSDE's one: The solution of the BSDE is with circle markers,
the solution of the BDSDE is with star markers. Confidence intervals are with dotted lines.}\label{figure2}
\end{figure}
\end{center}
\newpage
\section{Appendix}
\setcounter{equation}{0}
\setcounter{Assumption}{0}
\setcounter{Example}{0}
\setcounter{Theorem}{0}
\setcounter{Proposition}{0}
\setcounter{Corollary}{0}
\setcounter{Lemma}{0}
\setcounter{Definition}{0}
\setcounter{Remark}{0}
\subsection{{\bf Proof of Lemma \ref{first step}.}}
From (\ref{1N}), we have for all $t \in [t_{n},t_{n+1})$
\begin{eqnarray*}
\delta Y_{t}^{N} =\delta Y_{t_{n+1}}^{N}+ \int_{t}^{t_{n+1}} \delta f_{s}ds
+\int_{t}^{t_{n+1}} \delta g_{s} \overleftarrow{dB_{s}} - \int_{t}^{t_{n+1}} \delta Z_{s}^{N}dW_{s}.
\end{eqnarray*}
Using the Generalized It\^{o}'s Lemma (see Lemma 1.3, \cite{par94}), we obtain
\begin{eqnarray*}
|\delta Y_{t}^{N}|^{2} + \int_{t}^{t_{n+1}} \|\delta Z_{s}^{N}\|^{2}ds -  |\delta Y_{t_{n+1}}^{N}|^{2}&=&2\int_{t}^{t_{n+1}}\!\!\!\!\!\! (\delta Y_{s}^{N},\delta f_{s})ds + 2\int_{t}^{t_{n+1}} \!\!\!\!\!\!\! (\delta Y_{s}^{N},\delta g_{s} \overleftarrow{dB_{s}})\\
 &+& \!\int_{t}^{t_{n+1}} \!\!\!\!\|\delta g_{s}\|^{2}ds\!-\! 2\!\!\int_{t}^{t_{n+1}}\!\!\!\!\!\! (\!\delta Y_{s}^{N}\!,\!\delta Z_{s}^{N}dW_{s}),\!\forall t \!\in\! [t_{n},t_{n+1}),
\end{eqnarray*}
where $(.,.)$ is the inner product associated with the euclidean norm.\\
Then taking the expectation, we have
\begin{eqnarray}\label{defAt}
A^n_{t} := E[|\delta Y_{t}^{N}|^{2}]+\int_{t}^{t_{n+1}}E[\|\delta Z_{s}^{N}\|^{2}]ds-E[|\delta Y_{t_{n+1}}^{N}|^{2}]&=&2\int_{t}^{t_{n+1}} E[(\delta Y_{s}^{N},\delta f_{s})]ds\nonumber\\
&+&\int_{t}^{t_{n+1}} E[\|\delta g_{s}\|^{2}]ds.
\end{eqnarray}
From Assumption \textbf{(H2)-(ii)}, we have
\begin{eqnarray}\label{maj33-1}
&&\int_{t}^{t_{n+1}} E[\|\delta g_{s}\|^{2}]ds \leq K^{2}h^{2} + K^{2}\int_{t}^{t_{n+1}} E[|X_{s}-X_{t_{n+1}}^{N}|^{2}]ds\nonumber\\
&+&K^{2} \int_{t}^{t_{n+1}}E[|Y_{s}-Y_{t_{n+1}}^{N}|^{2}]ds
+\alpha^{2}E\big[\int_{t}^{t_{n+1}}||Z_{s}-Z_{t_{n+1}}^{N}||^{2}ds\big].
\end{eqnarray}
Using the Young's inequality, for a positive constant $\epsilon$, we obtain for all $n=0,\ldots,N-1$,
\begin{eqnarray}\label{eq1}
E\big[\int_{t}^{t_{n+1}}||Z_{s}-Z_{t_{n+1}}^{N}||^{2}ds\big]&\leq& (1+\frac{1}{\epsilon})E\big[\int_{t}^{t_{n+1}}||Z_{s}-\bar{Z}_{t_{n+1}}||^{2}ds\big]\nonumber\\
&+&(1+\epsilon)E\big[\int_{t}^{t_{n+1}}||\bar{Z}_{t_{n+1}}-Z_{t_{n+1}}^{N}||^{2}ds\big].
\end{eqnarray}
For all $n=0,\ldots,N-2$, we use Lemma \ref{lemme relation Z_set Z_tn}, the definition of $\bar{Z}$ and the Jensen's inequality to get
\begin{eqnarray*}
E\big[||\bar{Z}_{t_{n+1}}-Z_{t_{n+1}}^{N}||^{2}\big]&=& E\Big[||\frac{1}{h}E_{t_{n+1}}\big[\int_{t_{n+1}}^{t_{n+2}}\delta Z_r^N dr\big]||^2\Big].\nonumber\\
&\leq&\frac{1}{h^2}E\Big[E_{t_{n+1}}\big[||\int_{t_{n+1}}^{t_{n+2}}\delta Z_r^N dr||^2\big]\Big].
\end{eqnarray*}
By using Cauchy Schwartz inequality, we obtain for all $n=0,\ldots,N-2$
\begin{eqnarray}\label{intercale-Z-bar}
E\big[||\bar{Z}_{t_{n+1}}-Z_{t_{n+1}}^{N}||^{2}\big]
\leq\frac{1}{h}E\Big[\int_{t_{n+1}}^{t_{n+2}}\|\delta Z_r^N\|^2 dr\Big].
\end{eqnarray}
Plugging (\ref{intercale-Z-bar}) in (\ref{eq1}) then (\ref{eq1}) in (\ref{maj33-1}), we get for all $n=0,\ldots,N-1$
\begin{eqnarray}\label{maj34-1}
&&\int_{t}^{t_{n+1}} E[\|\delta g_{s}\|^{2}]ds \leq K^{2}h^{2} + K^{2}\int_{t}^{t_{n+1}} E[|X_{s}-X_{t_{n+1}}^{N}|^{2}]ds
+K^{2} \int_{t}^{t_{n+1}}E[|Y_{s}-Y_{t_{n+1}}^{N}|^{2}]ds\nonumber\\
&+& (1+\frac{1}{\epsilon})\alpha^{2}\int_{t}^{t_{n+1}}E[||Z_{s}-\bar{Z}_{t_{n+1}}||^{2}]ds
+(1+\epsilon)\alpha^{2}\mathds{1}_{\{n<N-1\}}\int_{t_{n+1}}^{t_{n+2}}E[\|\delta Z_s^N\|^2]ds.
\end{eqnarray}
We set $\alpha':=(1+\epsilon)\alpha^{2}$. We choose $\epsilon$ such that $\alpha' \in (0,1)$. This is possible since $\alpha^{2} \in (0,1)$.
 Then, we use the inequality $2ab \leq \frac{1-\alpha'}{16K^{2}}a^{2} + \frac{16K^{2}}{1-\alpha'}b^{2}$ and equation (\ref{maj34-1}) to obtain for all $n=0,\ldots,N-1$
\begin{eqnarray}\label{}
A^n_{t} &\leq &\frac{16K^{2}}{1-\alpha'}\int_{t}^{t_{n+1}} E[|\delta Y_{s}^{N}|^{2}]ds + \frac{1-\alpha'}{16K^{2}}\int_{t}^{t_{n+1}} E[|\delta f_{s}|^{2}]ds + K^{2}h^{2} \nonumber\\
&+& K^{2}\int_{t}^{t_{n+1}} E[|X_{s}-X_{t_{n+1}}^{N}|^{2}]ds + K^{2} \int_{t}^{t_{n+1}}E[|Y_{s}-Y_{t_{n+1}}^{N}|^{2}]ds\nonumber\\
&+&(1+\frac{1}{\epsilon})\alpha^{2}\int_{t}^{t_{n+1}}E[||Z_{s}-\bar{Z}_{t_{n+1}}||^{2}]ds
+\alpha'\mathds{1}_{\{n<N-1\}}\int_{t_{n+1}}^{t_{n+2}}E[\|\delta Z_s^N\|^2]ds\nonumber
\end{eqnarray}
Now using Assumption \textbf{(H2)-(i)} in the last inequality, we get
\begin{eqnarray}\label{}
A^n_{t} &\leq & \frac{16K^{2}}{1-\alpha'}\int_{t}^{t_{n+1}} E[|\delta Y_{s}^{N}|^{2}]ds + \frac{1-\alpha'}{16K^{2}} 4K^{2}\big\{ h^{2}
+ \int_{t}^{t_{n+1}} E[|X_{s}-X_{t_{n}}^{N}|^{2}]ds + \int_{t}^{t_{n+1}}E[|Y_{s}-Y_{t_{n}}^{N}|^{2}]ds \nonumber\\
&+&  \int_{t}^{t_{n+1}}E[||Z_{s}-Z_{t_{n}}^{N}||^{2}]ds \big\} + K^{2} h^2 + K^{2}\int_{t}^{t_{n+1}} E[|X_{s}-X_{t_{n+1}}^{N}|^{2}]ds
+ K^{2} \int_{t}^{t_{n+1}}E[|Y_{s}-Y_{t_{n+1}}^{N}|^{2}]ds\nonumber\\
&+& (1+\frac{1}{\epsilon})\alpha^{2}\int_{t}^{t_{n+1}}E[||Z_{s}-\bar{Z}_{t_{n+1}}||^{2}]ds
+ \alpha'\mathds{1}_{\{n<N-1\}}\int_{t_{n+1}}^{t_{n+2}}E[\|\delta Z_s^N\|^2]ds.\nonumber
\end{eqnarray}
Then, by plugging $\bar{Z}_{t_n}$ in the last inequality and from (\ref{intercale-Z-bar}), we obtain

\begin{eqnarray}\label{majAt}
A^n_{t} &\leq & \frac{16K^{2}}{1-\alpha'}\int_{t}^{t_{n+1}} E[|\delta Y_{s}^{N}|^{2}]ds + \frac{1-\alpha'}{4} \big\{ h^{2}
+ \int_{t}^{t_{n+1}} E[|X_{s}-X_{t_{n}}^{N}|^{2}]ds + \int_{t}^{t_{n+1}}E[|Y_{s}-Y_{t_{n}}^{N}|^{2}]ds \nonumber\\
&+& 2\int_{t}^{t_{n+1}}E[||Z_{s}-\bar{Z}_{t_{n}}||^{2}]ds + 2\int_{t_n}^{t_{n+1}}E[||\delta Z_{s}^{N}||^{2}]ds \big\}
+ K^{2} h^2 + K^{2}\int_{t}^{t_{n+1}} E[|X_{s}-X_{t_{n+1}}^{N}|^{2}]\nonumber\\
&+& K^{2} \int_{t}^{t_{n+1}}E[|Y_{s}-Y_{t_{n+1}}^{N}|^{2}]ds
+ (1+\frac{1}{\epsilon})\alpha^{2}\int_{t}^{t_{n+1}}E[||Z_{s}-\bar{Z}_{t_{n+1}}||^{2}]ds \nonumber\\
&+& \alpha'\mathds{1}_{\{n<N-1\}}\int_{t_{n+1}}^{t_{n+2}}E[\|\delta Z_s^N\|^2]ds.\nonumber
\end{eqnarray}
We have
\begin{eqnarray}\label{estiamtion1-Y_s}
E[|Y_{s}-Y_{t_{n+1}}^{N}|^{2}] &\leq& C\{E[|Y_{s}-Y_{t_{n+1}}|^{2}]+E[|\delta Y_{t_{n+1}}^{N}|^{2}] \}\nonumber\\
\end{eqnarray}
and similarly we have
\begin{eqnarray}\label{estiamtion2-Y_s}
E[|Y_{s}-Y_{t_{n}}^{N}|^{2}] &\leq& C\{ E[|Y_{s}-Y_{t_{n}}|^{2}] +E[|\delta Y_{t_{n}}^{N}|^{2}] \},
\end{eqnarray}
where $C$ is a positive constant independent of $x$.\\
From Lemma \ref{Lemma-estimation X}, (\ref{estiamtion1-Y_s}) and (\ref{estiamtion2-Y_s}), we obtain
\begin{eqnarray}\label{majA^n_t}
A^n_{t}& \leq & C \int_{t}^{t_{n+1}} E[|\delta Y_{s}^{N}|^{2}]ds + C h E[|\delta Y_{t_{n+1}}^{N}|^{2}] + C h E[|\delta Y_{t_{n}}^{N}|^{2}] +  C h^{2}(1+|x|^2)\nonumber\\
&+& C\int_{t}^{t_{n+1}}E[|Y_{s}-{Y}_{t_{n}}|^{2}]ds + C\int_{t}^{t_{n+1}}E[|Y_{s}-{Y}_{t_{n+1}}|^{2}]ds \nonumber\\
&+&  C\int_{t}^{t_{n+1}}E[||Z_{s}-\bar{Z}_{t_{n}}||^{2}]ds  +\frac{1-\alpha'}{2}\int_{t_n}^{t_{n+1}}E[||\delta Z_{s}^{N}||^{2}]ds \nonumber\\
&+& (1+\frac{1}{\epsilon})\alpha^{2}\int_{t}^{t_{n+1}}E[||Z_{s}-\bar{Z}_{t_{n+1}}||^{2}]ds
+\alpha' \mathds{1}_{\{n<N-1\}}\int_{t_{n+1}}^{t_{n+2}}E[\|\delta Z_s^N\|^2]ds.
\end{eqnarray}
where C is a generic positive constant depending on $\alpha'$ and independent of $x$.\\
Using (\ref{majA^n_t}) for  $t=t_{n}$, we get
\begin{eqnarray}\label{majYt}
E[|\delta Y_{t_n}^{N}|^{2}]+\frac{1+\alpha'}{2}\int_{t_n}^{t_{n+1}}E[||\delta Z_{s}^{N}||^{2}]ds
 \leq C \int_{t_n}^{t_{n+1}} E[|\delta Y_{s}^{N}|^{2}]ds+C h E[|\delta Y_{t_{n}}^{N}|^{2}] + B_{n},\textrm{ }
\end{eqnarray}
where we set for all $n=0,\ldots,N-1$:
\begin{eqnarray}\label{}
B_{n}&:=&E[|\delta Y_{t_{n+1}}^{N}|^{2}]+C h E[|\delta Y_{t_{n+1}}^{N}|^{2}]+ C h^{2}(1+|x|^2)\nonumber\\
&+& C\int_{t_n}^{t_{n+1}}E[|Y_{s}-{Y}_{t_{n}}|^{2}]ds + C\int_{t_n}^{t_{n+1}}E[|Y_{s}-{Y}_{t_{n+1}}|^{2}]ds \nonumber\\
&+&  C\int_{t_n}^{t_{n+1}}E[||Z_{s}-\bar{Z}_{t_{n}}||^{2}]ds
\nonumber\\
&+& (1+\frac{1}{\epsilon})\alpha^{2}\int_{t_n}^{t_{n+1}}E[||Z_{s}-\bar{Z}_{t_{n+1}}||^{2}]ds
+\alpha' \mathds{1}_{\{n<N-1\}}\int_{t_{n+1}}^{t_{n+2}}E[\|\delta Z_s^N\|^2]ds.
\end{eqnarray}
From (\ref{majYt}), we obtain
\begin{eqnarray*}
\int_{t_n}^{t_{n+1}}E[||\delta Z_{s}^{N}||^{2}]ds\leq C(h\sup_{t\in[t_n,t_{n+1}]}E[|\delta Y_{t}^{N}|^{2}])+B_n.
\end{eqnarray*}
Combining the previous inequality with (\ref{majA^n_t}), we get for $h$ small enough
\begin{eqnarray*}
\sup_{t\in[t_n,t_{n+1}]}E[|\delta Y_{t}^{N}|^{2}]\leq C B_n,
\end{eqnarray*}
which proves the first part of the Lemma.\\
Inserting the previous inequality into (\ref{majYt}), we get
\begin{eqnarray*}
E\Big[|\delta Y_{t_n}^{N}|^{2}+\frac{1+\alpha'}{2}\int_{t_n}^{t_{n+1}}||\delta Z_{s}^{N}||^{2}ds\Big]
 \leq (1+Ch)\Big\{E\Big[|\delta Y_{t_{n+1}}^{N}|^{2}+\alpha' \mathds{1}_{\{n<N-1\}}\int_{t_{n+1}}^{t_{n+2}}\|\delta Z_s^N\|^2ds \Big]+R_{n}\Big\},
\end{eqnarray*}
which proves the second part of the Lemma.
\ep
\subsection{{\bf Proof of Proposition \ref{propderiv1}.}}
To simplify the notations, we restrict ourselves to the case $k=d=l=1$.
$(D_{\theta}Y,D_{\theta}Z)$ is well defined and from inequalities \reff{apriori1} and \reff{aprioriDX}, we deduce that for each $\theta\leq T$
\begin{equation*}
E[\Sup_{t\leq s\leq T}|D_{\theta}Y_s|^2]+E[\int_t^T|D_{\theta}Z_s|^2ds]\leq C (1+|x|^{2}).
\end{equation*}
We define recursively the sequence $(Y^{m},Z^{m})$ as follows. First we set $(Y^{0},Z^{0})=(0,0)$. Then, given $(Y^{m-1},Z^{m-1})$,
we define $(Y^{m},Z^{m})$ as the unique solution in $\mathbb{S}_k^{2}([t,T])\times \mathbb{H}^2_{k\times d}([t,T])$ of
\begin{eqnarray*}
Y_{s}^{m}=\Phi(X_{T}^{t,x})+\int_{s}^{T}f(r,X_{r}^{t,x},Y_{r}^{m-1},Z_{r}^{m-1})dr
+\int_{s}^{T}g(r,X_{r}^{t,x},Y_{r}^{m-1},Z_{r}^{m-1})\overleftarrow{dB_{r}}
-\int_{s}^{T}Z_{r}^{m}dW_{r}.
\end{eqnarray*}
We recursively show that $(Y^{m},Z^{m})\in\mathcal{B}^{2}([t,T],\mathbb{D}^{1,2})$.
Suppose that $(Y^{m},Z^{m})\in \Bc^2([t,T],\mathbb{D}^{1,2})$ and let us show that
$(Y^{m+1},Z^{m+1})\in \Bc^2([t,T],\mathbb{D}^{1,2})$.\\
From the induction assumption, we have $\Phi(X_{T})+\int_{s}^{T}f(r,\Sigma_{r}^{m})dr\in\mathbb{D}^{1,2}$.\\
We have $g(r,\Sigma_{r}^{m})\in  \mathbb{D}^{1,2}$ for all $r\in [t,T]$. From Lemma \ref{backD12},
we have $\int_{t}^{T}g(r,\Sigma_{r}^{m})\overleftarrow{dB_{r}}\in \mathbb{D}^{1,2}$.
then
\begin{eqnarray*}
Y_{s}^{m+1}=E\big[ \Phi(X_{T}^{t,x})+\int_{s}^{T}f(r,\Sigma_{r}^{m})dr+\int_{s}^{T}g(r,\Sigma_{r}^{m})\overleftarrow{dB_{r}}|\mathcal{F}_{t,s}^{W}\vee \mathcal{F}_{t,T}^{B}\big]\in\mathbb{D}^{1,2},
\end{eqnarray*}
where $\Sigma_{r}^{m}:=(X_{r}^{t,x},Y_{r}^{m},Z_{r}^{m})$.\\
Hence
\begin{eqnarray*}
\int_{t}^{T}Z_{r}^{m+1}dW_{r}=\Phi(X_{T}^{t,x})+\int_{t}^{T}f(r,\Sigma_{r}^{m})dr+
\int_{t}^{T}g(r,\Sigma_{r}^{m})\overleftarrow{dB_{r}}-Y_{t}^{m+1}\in\mathbb{D}^{1,2}.
\end{eqnarray*}
It follows from Lemma \ref{forD12} that $Z^{m+1}\in \mathcal{M}^{2}_{k\times d}([t,T],\mathbb{D}^{1,2})$
and we have $D_{\theta}Y_{s}^{m+1}= D_{\theta}Z_{s}^{m+1}=0$ for  $t\leq s\leq \theta$ and for $\theta\leq s \leq T$, we have
\begin{eqnarray}\label{eqiter}
&&D_{\theta}Y_{s}^{m+1}=\nabla\Phi(X_{T}^{t,x})D_{\theta}X_{T}^{t,x}\\
&&+\int_{s}^{T}\Big( \nabla_{x}f(r,\Sigma_{r}^{m})
 D_{\theta}X_{r}+\nabla_{y}f(r,\Sigma_{r}^{m})D_{\theta}Y_{r}^{m}+\nabla_{z}f(r,\Sigma_{r}^{m})D_{\theta}Z_{r}^{m}\Big)dr\nonumber\\
 &&+\int_{s}^{T}\Big( \nabla_{x}g(r,\Sigma_{r}^{m})
 D_{\theta}X_{r}+ \nabla_{y}g(r,\Sigma_{r}^{m})D_{\theta}Y_{r}^{m}+\nabla_{z}g(r,\Sigma_{r}^{m})
 D_{\theta}Z_{r}^{m}\Big) \overleftarrow{dB_{r}} \nonumber\\
 &&- \int_{s}^{T}D_{\theta}Z_{r}^{m+1}dW_{r}.\nonumber
\end{eqnarray}
From inequality \reff{apriori1}, we deduce that for each $\theta\leq T$
\begin{equation*}\label{aprioriDYm}
E[\Sup_{t\leq s\leq T}|D_{\theta}Y_s^{m+1}|^2]+E[\int_t^T|D_{\theta}Z_s^{m+1}|^2ds]\leq C (1+|x|^{2}).
\end{equation*}
It is known that inequality \reff{apriori1} holds for $(Y^{m+1},Z^{m+1})$ and so we deduce that
\begin{equation*}
\|Y^{m+1}\|_{1,2}+\|Z^{m+1}\|_{1,2}<\infty,
\end{equation*}
which shows that $(Y^{m+1},Z^{m+1})\in \Bc^2([t,T],\mathbb{D}^{1,2})$.
Using the contraction mapping argument as in El Karoui, Peng and Quenez
\cite {kar}, we deduce that  $(Y^{m+1},Z^{m+1})$ converges to
$(Y,Z)$ in $\mathbb{S}^2([t,T])\times \mathbb{H}^2([t,T])$.
We will show that $(D_{\theta}Y^{m},D_{\theta}Z^{m})$ converges to $(Y^{\theta},Z^{\theta})$
 in $L^2(\Omega \times [t,T]\times [t,T] ,dP\otimes dt\otimes dt)$, where $Y_{s}^{\theta}=Z_{s}^{\theta}=0$ for all $t\leq s\leq \theta $
and $(Y_{s}^{\theta},Z_{s}^{\theta},\theta\leq s\leq T)$ is the solution
 of the following BDSDE
\begin{eqnarray}\label{eqlim}
Y_{s}^{\theta}&=&\nabla\Phi(X_{T}^{t,x})D_{\theta}X_{T}^{t,x}\\
&+&\int_{s}^{T}\Big(\nabla_{x}f(r,\Sigma_{r})
 D_{\theta}X_{r}+\nabla_{y}f(r,\Sigma_{r})Y_{r}^{\theta}+\nabla_{z}f(r,\Sigma_{r})
 Z_{r}^{\theta}\Big)dr\nonumber\\
 &+&\int_{s}^{T}\Big( \nabla_{x}g(r,\Sigma_{r})
 D_{\theta}X_{r}+\nabla_{y}g(r,\Sigma_{r})Y_{r}^{\theta}+\nabla_{z}g(r,\Sigma_{r})
 Z_{r}^{\theta}\Big)\overleftarrow{dB_{r}} \nonumber\\
 &-& \int_{s}^{T}Z_{r}^{\theta}dW_{r}.\nonumber
\end{eqnarray}
From equations \reff{eqiter} and \reff{eqlim}, we have
\begin{eqnarray*}
&&D_{\theta}Y_{s}^{m+1}-Y_{s}^{\theta}=
\int_{s}^{T}\Big((\nabla_{x}f(r,\Sigma_{r}^{m})
-\nabla_{x}f(r,\Sigma_{r}))D_{\theta}X_{r}^{t,x}\\
&&+\nabla_{y}f(r,\Sigma_{r}^{m})D_{\theta}Y_{r}^{m}
-\nabla_{y}f(r,\Sigma_{r})Y_{r}^{\theta}
+\nabla_{z}f(r,\Sigma_{r}^{m})D_{\theta}Z_{r}^{m}
-\nabla_{z}f(r,\Sigma_{r})Z_{r}^{\theta}\Big)dr\\
&&+\int_{s}^{T}\Big((\nabla_{x}g(r,\Sigma_{r}^{m})
-\nabla_{x}g(r,\Sigma_{r}))D_{\theta}X_{r}^{t,x}
+\nabla_{y}g(r,\Sigma_{r}^{m})D_{\theta}Y_{r}^{m}
-\nabla_{y}g(r,\Sigma_{r})Y_{r}^{\theta}\Big)\overleftarrow{dB_{r}}\\
&&+\int_{s}^{T}\Big(\nabla_{z}g(r,\Sigma_{r}^{m})D_{\theta}Z_{r}^{m}
-\nabla_{z}g(r,\Sigma_{r})Z_{r}^{\theta}\Big)\overleftarrow{dB_{r}}\\
&&-\int_{s}^{T}(D_{\theta}Z_{r}^{m+1}-Z_{r}^{\theta})dW_{r}.
\end{eqnarray*}
From Proposition \ref{aprioriest}, we have
\begin{eqnarray}\label{m3est}
&&E[\sup_{\theta \leq s\leq T}|D_{\theta}Y_{s}^{m+1}-Y_{s}^{\theta}|^{2}]+E[\int_{s}^{T}|D_{\theta}Z_{r}^{m+1}-Z_{r}^{\theta}|^{2}dr]\\
&&\leq CE\Big[\int_{s}^{T}\Big|(\nabla_{x}f(r,\Sigma_{r}^{m})
-\nabla_{x}f(r,\Sigma_{r}))D_{\theta}X_{r}^{t,x}\nonumber
+\nabla_{y}f(r,\Sigma_{r}^{m})Y_{r}^{\theta}
-\nabla_{y}f(r,\Sigma_{r})Y_{r}^{\theta}\nonumber\\
&&+\nabla_{z}f(r,\Sigma_{r}^{m})Z_{r}^{\theta}
-\nabla_{z}f(r,\Sigma_{r})Z_{r}^{\theta}\Big|^2dr\Big]\nonumber\\
&&+CE\Big[\int_{s}^{T}\Big|(\nabla_{x}g(r,\Sigma_{r}^{m})
-\nabla_{x}g(r,\Sigma_{r}))D_{\theta}X_{r}\nonumber
+\nabla_{y}g(r,\Sigma_{r}^{m})Y_{r}^{\theta}
-\nabla_{y}g(r,\Sigma_{r})Y_{r}^{\theta}\nonumber\\
&&+\nabla_{z}g(r,\Sigma_{r}^{m})Z_{r}^{\theta}
-\nabla_{z}g(r,\Sigma_{r})Z_{r}^{\theta}\Big|^{2}dr\Big].\nonumber
\end{eqnarray}
Therefore, we obtain
\begin{eqnarray}\label{YmYth}
&&E[\int_t^T\int_t^T|D_{\theta}Y_s^{m+1}-Y^{\theta}_s|^{2}dsd\theta]
+E[\int_t^T\int_t^T|D_{\theta}Z_s^{m+1}-Z^{\theta}_s|^{2}dsd\theta]\\
&\leq& C E[\int_t^T\int_{t}^{T}|\delta_{r,\theta}^{m}|^2dr d\theta] +C E[\int_t^T\int_{t}^{T}|\rho_{r,\theta}^{m}|^2dr d\theta],\nonumber
\end{eqnarray}
where
\begin{eqnarray}\label{del}
\delta_{r,\theta}^{m}&=&(\nabla_{x}f(r,\Sigma_{r}^{m})
-\nabla_{x}f(r,\Sigma_{r}))D_{\theta}X_{r}^{t,x}
+\nabla_{y}f(r,\Sigma_{r}^{m})Y_{r}^{\theta}
-\nabla_{y}f(r,\Sigma_{r})Y_{r}^{\theta}\nonumber\\
&+&\nabla_{z}f(r,\Sigma_{r}^{m})Z_{r}^{\theta}
-\nabla_{z}f(r,\Sigma_{r})Z_{r}^{\theta},
\end{eqnarray}
and
\begin{eqnarray}\label{ro}
\rho_{r,\theta}^{m}&=&(\nabla_{x}g(r,\Sigma_{r}^{m})
-\nabla_{x}g(r,\Sigma_{r}))D_{\theta}X_{r}^{t,x}
+\nabla_{y}g(r,\Sigma_{r}^{m})Y_{r}^{\theta}
-\nabla_{y}g(r,\Sigma_{r})Y_{r}^{\theta}\nonumber\\
&+&\nabla_{z}g(r,\Sigma_{r}^{m})Z_{r}^{\theta}
-\nabla_{z}g(r,\Sigma_{r})Z_{r}^{\theta}.
\end{eqnarray}
From the definition of  $(\delta_{r,\theta}^{m})_{t\leq r,\theta\leq T}$, we have
$E[\int_t^T\int_{t}^{T}|\delta_{r,\theta}^{m}|^2dr d\theta]\leq C
\int_t^T(A_{m}(\theta,t,T)+B_{m}(\theta,t,T))d\theta$, where
\begin{eqnarray*}
A_{m}(\theta,t,T)&=&E\Big[\int_{t}^{T}|(\nabla_{x}f(r,\Sigma_{r}^{m})
-\nabla_{x}f(r,\Sigma_{r}))D_{\theta}X_{r}^{t,x}|^2 dr\Big],\\
B_{m}(\theta,t,T)&=&E\Big[\int_{t}^{T}\big|(\nabla_{y}f(r,\Sigma_{r})
-\nabla_{y}f(r,\Sigma_{r}^{m}))Y_{r}^{\theta}\big|^2dr\Big]\\
&+&E\Big[\int_{t}^{T}\big|(\nabla_{z}f(r,\Sigma_{r})
-\nabla_{z}f(r,\Sigma_{r}^{m}))Z_{r}^{\theta}\big|^2dr\Big]
\end{eqnarray*}
Moreover, since $\nabla_{x}f$ is bounded and continuous with respect to $(x,y,z)$,
it follows by the dominated convergence theorem and inequality \reff{integrability}  that
\begin{eqnarray}\label{A}
\Lim_{m\rightarrow \infty}\int_t^TA_{m}(\theta,t,T)d\theta=0.
\end{eqnarray}
Furthermore, since $\nabla_{y}f$ and $\nabla_{z}f$ are bounded and continuous with respect to $(x,y,z)$,
it follows, also, by the dominated convergence theorem and inequality \reff{apriori1} that
\begin{eqnarray}\label{B}
\Lim_{m\rightarrow \infty}\int_t^TB_{m}(\theta,t,T)d\theta=0.
\end{eqnarray}
From the definition of  $(\rho_{r,\theta}^{m})_{s\leq r,\theta\leq T}$, we have
$E[\int_t^T\int_{t}^{T}|\rho_{r,\theta}^{m}|^2dr d\theta]\leq C \int_t^T(A'_{m}(\theta,t,T)+B'_{m}(\theta,t,T))d\theta$,
with
\begin{eqnarray*}\label{ABp}
A'_{m}(\theta,t,T)&=&E\Big[\int_{t}^{T}|(\nabla_{x}g(r,\Sigma_{r}^{m})
-\nabla_{x}g(r,\Sigma_{r}))D_{\theta}X_{r}^{t,x}|^2 dr\Big],\\
B'_{m}(\theta,t,T)&=&E\Big[\int_{t}^{T}\big|(\nabla_{y}g(r,\Sigma_{r})
-\nabla_{y}g(r,\Sigma_{r}^{m}))Y_{r}^{\theta}\big|^2dr\big]\\
&+&E\Big[\int_{t}^{T}\big|(\nabla_{z}g(r,\Sigma_{r})
-\nabla_{z}g(r,\Sigma_{r}^{m}))Z_{r}^{\theta}\big|^2dr\Big].\nonumber
\end{eqnarray*}
Similarly as shown above, since $\nabla_{y}g$ and $\nabla_{z}g$ are bounded and continuous with respect to $(x,y,z)$ we can
show that:
\begin{eqnarray}\label{ABp}
\Lim_{m\rightarrow \infty}\int_t^TA'_{m}(\theta,t,T)d\theta
=\Lim_{m\rightarrow \infty}\int_t^T B'_{m}(\theta,t,T) d\theta=0.
\end{eqnarray}
Plugging \reff{A}, \reff{B} and \reff{ABp} into inequality \reff{YmYth}, we deduce that
\begin{eqnarray*}
\Lim_{m\rightarrow \infty}E[\int_t^T\int_t^T|D_{\theta}Y_s^{m+1}-Y^{\theta}_s|^{2}dsd\theta]
+E[\int_t^T\int_t^T|D_{\theta}Z_s^{m+1}-Z^{\theta}_s|^{2}dsd\theta]=0.
\end{eqnarray*}
It follows that $(Y^{m},Z^{m})$ converges to $(Y,Z)$ in $L^2([t,T],\mathbb{D}^{1,2}\times \mathbb{D}^{1,2})$
and a version of $(D_{\theta}Y,D_{\theta}Z)$ is given by $(Y^{\theta},Z^{\theta})$, which is the desired result.
\ep
\subsection{ Second order Malliavin derivative of  the solution of  BDSDE's}
We apply similar computation to get the second order Malliavin  derivative  representations of the solution of BDSDE 's, so we will omit the proof.
\begin{Proposition}\label{propderiv2}
We set $t\in [0,T]$. Then, under Assumptions {\bf (H2)} and {\bf(H3)}, for each $t\leq \theta\leq T$,
$(D_{\theta}Y,D_{\theta}Z)$ belongs to $\mathcal{B}^{2}([t,T],\mathbb{D}^{1,2})$.
For each $t\leq v \leq T$ and $1\leq i,j\leq d$, we have
\begin{eqnarray*}
D_{v}^jD_{\theta}^iY_{s}=D_{v}^jD_{\theta}^iZ_{s}^{n}=0,\,\,1\leq n\leq d, \mbox{ if }s < \theta\vee v,
\end{eqnarray*}
and a version of $(D_{v}^jD_{\theta}^iY_{s},D_{v}^jD_{\theta}^iZ_{s})_{ v\vee \theta \leq s\leq T}$
is the unique solution of the following equation:
\begin{eqnarray*}
D_{v}^jD_{\theta}^iY_{s}=T_1(\Phi)+T_2(f)+T_3(g)+T_4(W),
\end{eqnarray*}
where
\begin{eqnarray*}
T_1(\Phi)=\Sum_{n_1=1}^{k}\nabla((\nabla \Phi)^{n_1}(X_{T}^{t,x}))D_{v}^jX_{T}^{t,x}(D_{\theta}^iX_{T}^{t,x})^{n_1}
+\nabla\Phi(X_{T}^{t,x})D_{v}^jD_{\theta}^iX_{T}^{t,x},
\end{eqnarray*}
\begin{eqnarray*}
T_2(f)&=&\int_{s}^{T}\Sum_{n_1=1}^{k}\Big(\nabla_x((\nabla_x f)^{n_1}(r,X_{r}^{t,x},Y_{r},Z_{r}))D_{v}^jX_{r}^{t,x}
(D_{\theta}^iX_{r}^{t,x})^{n_1}\nonumber\\
&+& \nabla_x f(r,X_{r}^{t,x},Y_{r},Z_{r})D_{v}^jD_{\theta}^iX_{r}^{t,x}\Big) dr\nonumber\\
&+&\int_{s}^{T}\Big(\Sum_{n_1=1}^{k}\nabla_y((\nabla_y f)^{n_1}(r,X_{r}^{t,x},Y_{r},Z_{r}))D_{v}^jY_{r}
(D_{\theta}^iY_{r})^{n_1}\nonumber\\
&+&\nabla_y f(r,X_{r}^{t,x},Y_{r},Z_{r})D_{v}^jD_{\theta}^iY_{r}\Big)dr \nonumber\\
&+&\Sum_{n_2=1}^{d}\int_{s}^{T}\Sum_{n_1=1}^{k}\nabla_{z^{n_2}}((\nabla_{z^{n_2}} f)^{n_1}(r,X_{r}^{t,x},Y_{r},Z_{r}))D_{v}^jZ_{r}^{n_2}
(D_{\theta}^iZ_{r}^{n_2})^{n_1}dr\nonumber\\
&+& \Sum_{n_2=1}^{d}\int_{s}^{T}\nabla_{z^{n_2}} f(r,X_{r}^{t,x},Y_{r},Z_{r})D_{v}^jD_{\theta}^iZ_{r}^{n_2}  dr,\nonumber\\
\end{eqnarray*}
\begin{eqnarray*}
T_3(g)&=&\Sum_{n_3=1}^{l}\int_{s}^{T}\Sum_{n_1=1}^{k}\nabla_x((\nabla_x g^{n_3})^{n_1}(r,X_{r}^{t,x},Y_{r},Z_{r}))D_{v}^jX_{r}^{t,x}
(D_{\theta}^iX_{r}^{t,x})^{n_1}\overleftarrow{dB_{r}^{n_3}}\nonumber\\
&+&\Sum_{n_3=1}^{l}\int_{s}^{T}\nabla_x g^{n_3}(r,X_{r}^{t,x},Y_{r},Z_{r})D_{v}^jD_{\theta}^iX_{r}^{t,x} \overleftarrow{dB_{r}^{n_3}}\nonumber\\
&+&\Sum_{n_3=1}^{l} \int_{s}^{T}\Sum_{n_1=1}^{k}\nabla_y((\nabla_y g^{n_3})^{n_1}(r,X_{r}^{t,x},Y_{r},Z_{r}))D_{v}^jY_{r}
(D_{\theta}^iY_{r})^{n_1}\overleftarrow{dB_{r}^{n_3}}\nonumber\\
&+&\Sum_{n_3=1}^{l} \int_{s}^{T}\nabla_y g^{n_3}(r,X_{r}^{t,x},Y_{r},Z_{r})D_{v}^jD_{\theta}^iY_{r}
\overleftarrow{dB_{r}^{n_3}} \nonumber\\
&+&\Sum_{n_3=1}^{l}\Sum_{n_2=1}^{d}\int_{s}^{T}\Sum_{n_1=1}^{k}\nabla_{z^{n_2}}((\nabla_{z^{n_2}} g^{n_3})^{n_1}(r,X_{r}^{t,x},Y_{r},Z_{r}))D_{v}^jZ_{r}^{n_2}
(D_{\theta}^iZ_{r}^{n_2})^{n_1}\overleftarrow{dB_{r}^{n_3}}\nonumber\\
&+& \Sum_{n_3=1}^{l}\Sum_{n_2=1}^{d}\int_{s}^{T}\nabla_{z^{n_2}} g^{n_3}(r,X_{r}^{t,x},Y_{r},Z_{r})D_{v}^jD_{\theta}^iZ_{r}^{n_2} \overleftarrow{dB_{r}^{n_3}},
\end{eqnarray*}
\begin{eqnarray*}
T_4(W)=-\Sum_{n_2=1}^{d}\int_{s}^{T}D_{v}^jD_{\theta}^{i}Z_{r}^{n_2}dW_{r}^{n_2}\nonumber,
\end{eqnarray*}
$(z^j)_{1\leq j\leq d}$ denotes the j-th column of the matrix $z$,
$(g^{n_3})_{1\leq n_3\leq l}$ denotes the $n_3$-th column of the matrix $g$, $B=(B^1,\ldots, B^l)$, $(D_{\theta}^iX_{r}^{t,x})^{n_1}$ is
the $n_1$-th component of the vector $(D_{\theta}^iX_{r}^{t,x})$,
$(D_{\theta}^iY_{r})^{n_1}$ is the $n_1$-th component of the vector $(D_{\theta}^iY_{r})$
and $(D_{\theta}^iZ_{r}^{n_2})^{n_1}$ is the $n_1$-th component of the vector $(D_{\theta}^iZ_{r}^{n_2})$.
\end{Proposition}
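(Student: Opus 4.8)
The plan is to mimic the proof of Proposition \ref{propderiv1} given in the Appendix, applying the same Picard approximation and contraction argument, but now at the level of the linear BDSDE \reff{DY} solved by the first-order Malliavin derivative $(D_\theta Y, D_\theta Z)$. Concretely, I would first show that $(D_\theta Y, D_\theta Z)$, which by Proposition \ref{propderiv1} already belongs to $\Bc^2([t,T],\mathbb{D}^{1,2})$, is itself Malliavin differentiable, and then identify its derivative as the solution of the announced equation. The two essential ingredients are Lemma \ref{forD12} and Lemma \ref{backD12}, which let us differentiate the forward ($dW$) and backward ($\overleftarrow{dB}$) stochastic integrals, together with the chain rule, which is licensed by the $C^2_b$ regularity of $f$, $g$ and $\Phi$ in Assumption {\bf(H3)}.

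The core computation is to apply $D_v^j$ term by term to the linear BDSDE \reff{DY}. For a typical driver term such as $\nabla_x f(r,\Sigma_r)D_\theta^i X_r$, with $\Sigma_r=(X_r^{t,x},Y_r,Z_r)$, the product rule gives $D_v^j[\nabla_x f(r,\Sigma_r)]\,D_\theta^i X_r+\nabla_x f(r,\Sigma_r)\,D_v^jD_\theta^i X_r$, and the chain rule expands the first factor into a contraction of the Hessian of $f$ against $D_v^j\Sigma_r=(D_v^jX_r,D_v^jY_r,D_v^jZ_r)$. Summing these contributions over all driver terms of $f$, $g$ and over the terminal condition $\Phi$ reproduces exactly the Hessian-type terms (the $\nabla(\nabla_\bullet f)$ and $\nabla(\nabla_\bullet g)$ factors multiplied by products of first Malliavin derivatives) together with the genuinely linear terms $\nabla f\cdot D_v^jD_\theta^i(\cdot)$ and $\nabla g\cdot D_v^jD_\theta^i(\cdot)$, the forward integral being handled by Lemma \ref{forD12} and the backward integrals by Lemma \ref{backD12}. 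This produces precisely the decomposition $T_1(\Phi)+T_2(f)+T_3(g)+T_4(W)$, and the vanishing of $D_v^jD_\theta^i Y_s$ and $D_v^jD_\theta^i Z_s$ for $s<\theta\vee v$ follows from the corresponding vanishing of $D_\theta X$, $D_\theta Y$, $D_\theta Z$ outside their support intervals.

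To make this rigorous I would reuse the Picard scheme $(Y^m,Z^m)$ of the Appendix: one shows inductively that each $(D_\theta Y^m, D_\theta Z^m)$ — the first Malliavin derivative of the $m$-th iterate, which solves the linear BDSDE \reff{eqiter} — is itself Malliavin differentiable, by applying Lemma \ref{forD12} and Lemma \ref{backD12} to the integrals appearing in \reff{eqiter}, and then controls the second derivatives uniformly in $m$ via the a priori estimate of Proposition \ref{aprioriest}. Here the Hessian products act as an inhomogeneous forcing term; their square-integrability is guaranteed by the higher-moment bounds, namely \reff{aprioriDX} and \reff{aprioriDX4} for the forward process and \reff{aprioriDY} for $(D_\theta Y, D_\theta Z)$, which together yield the second-order a priori estimate \reff{aprioriDDY}. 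The contraction constant stays strictly below one because $\|\nabla_z g\|\leq\alpha<1$ by Assumption {\bf(H2)(ii)}, exactly as in Proposition \ref{aprioriest}.

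The hard part will be the control of the bilinear (Hessian) source terms: unlike the first-order case, where every inhomogeneous contribution was linear in the unknown, the equation for $(D_v D_\theta Y, D_v D_\theta Z)$ carries forcing terms that are products of two first-order Malliavin derivatives. Showing that these forcing terms lie in $\mathbb{H}^2$, so that Proposition \ref{aprioriest} applies with a finite right-hand side, requires combining Cauchy--Schwarz with the quartic bounds \reff{aprioriDX4} and \reff{apriori1}, which is precisely why the growth $(1+|x|^4)$ surfaces in \reff{aprioriDDY}; once this integrability is secured, the linearity of the equation in the second derivative itself together with the contraction $\alpha<1$ delivers existence and uniqueness, and the limit $(Y^\theta,Z^\theta)$ is identified as the desired version of $(D_v D_\theta Y, D_v D_\theta Z)$ exactly as in the first-order argument.
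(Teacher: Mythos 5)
Your proposal follows exactly the route the paper intends: the paper in fact omits this proof entirely, remarking only that one applies ``similar computation'' to that of Proposition \ref{propderiv1}, and your plan --- differentiating the linear BDSDE \reff{DY} term by term via Lemma \ref{forD12} and Lemma \ref{backD12}, running the Picard iteration of the Appendix one order higher, and controlling the bilinear Hessian forcing terms with the quartic bounds \reff{aprioriDX4} and \reff{apriori1} so that the contraction $\alpha<1$ closes the argument --- is precisely that computation carried out. Your identification of the Hessian source terms as the genuinely new difficulty, and of why the $(1+|x|^4)$ growth appears in \reff{aprioriDDY}, supplies more detail than the paper itself does.
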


\begin{thebibliography}{abc99xyz}

\bibitem{Aman} Aman, A. (2013): A numerical scheme for backward doubly stochastic differential equations.
\textit{Bernouilli}, Vol. 19, No.1, 93-114.

\bibitem{Aboura2} Aboura, O.  (2011):  A Regression Monte-Carlo method for Backward Doubly Stochastic Differential Equations.
hal-00607274, version 1 (2011).

\bibitem{Bachouch thesis} Bachouch A. (2014): Numerical Computations for Backward Doubly Stochastic Differential Equations and Non-linear Stochastic PDEs.  {\it Ph.D. dissertation,  University of Le Mans and National Engineering School of Tunis}.

\bibitem{B97} Bally, V. (1997): Approximation scheme for solutions of BSDE.  {\it Pitman Research Notes in Mathematics}, 364, 177--191.

\bibitem{BL}
Barles G., Lesigne E. (1997): SDE, BSDE and PDE.  \textit{Pitman Research Notes in Mathematics Series}, 364, 47-80.

\bibitem{BMat} Bally V., Matoussi A. (1999): Weak solutions for SPDEs and Backward Doubly Stochastic Differential Equations. \textit{J.
Theoretical Probability}, Vol.14,No.1, 125-164.

\bibitem{bou1}
Bouchard B., Touzi N. (2004): Discrete time approximation and Monte-Carlo
Simulation of Backward Stochastic differential equations. \textit{ Stochastic
  Processes and Their applications}, 111, 175-206.

\bibitem{bou2}
Bouchard B., Elie R.(2008) : Discrete-time approximation of decoupled forward-backward SDEs with jumps.  \textit{Stochastic Processes and Their applications}, 118(1), 53-57.

\bibitem{crisan} Crisan D.(2006): Particle Approximations for a Class of Stochastic Partial Differential Equations. \textit{Appl Math Optim}, 54, 293-314.

\bibitem{cri} Crisan D., Manolarakis K. and Touzi N.(2010): On the Monte Carlo simulation of BSDEs: An improvement on the Malliavin weights.
 \textit{Stochastic Processes and Their applications}, Vol. 120, No. 7, 1133-1158.

\bibitem{kar}
El Karoui N., Peng S.,Quenez M.C.(1997): Backward Stochastic Differential Equations in finance. Mathematical Finance,
Vol.7, No. 1, 1-71.
\bibitem{Flandoli} Flandoli, F.: Stochastic Problems in Fluid Dynamics. \textit{Stochastic Partial Differential
Equations and Applications}, Giuseppe Da Prato, Luciano Tubaro, 2002, Marcel Dekker, Inc.

\bibitem{gobet1} Gobet, E., Lemor, J.P, Warin X.(2005): A regression-based Monte Carlo method to solve backward stochastic differntiel equations. \textit{The Annals of Applied Probability}, Vol.15 , No. 3, 2172-2202.

\bibitem{gob} Gobet, E., Lemor, J.P, Warin X.(2006): Rate of convergence of an empirical regression
method for solving generalized backward stochastic differential equations.  \textit{Bernoulli} , Vol. 12 , No. 5, 889-916.

\bibitem{gobet-Lemor} Gobet, E., Lemor, J.P. (2005): Numerical simulation of BSDEs using empirical regression methods:
 theory and practice.\textit{Arxiv} 0806.4447v1.

\bibitem{gyo} Gyongy I., Krylov N. (2010): Accelerated finite difference schemes for linear stochastic partial differential
equations in the whole space. \textit{SIAM J.Math.Anal.},42, 2275-2296.

\bibitem{gyo1} Gyongy I., Nualart D. (1995): Implicit scheme for quasi-linear parabolic partial differential equations perturbed by space-time white noise. \textit{Stochastic Processes and Their applications}, Vol. 58, 57-72.

\bibitem{gyo2} Gyongy I. (1999):  Lattice approximations for stochastic quasi-linear parabolic partial differential equations driven by a space-time white noise I. \textit{Potential Anal.} 11 (1999), 1-37.

\bibitem{Kloeden} Jentzen A., Kloeden P. (2010): Taylor Expansions of solutions of stochastic partial differential equation with additive noise. \textit{The Annals of Probability},  Vol.38, No.2, 532-569.

\bibitem{Kloeden-Platen} Kloeden P.E., Platen,E. (1992): Numerical Solution of Stochastic Differential Equations. Springer, Berlin.
\bibitem{Kry-Rozv} Krylov N.V. , Rozovskii B.L.:Stochastic evolution equations. \textit{In Current problems in mathematics},Vol 14 (Russian), pages 71-147, 256. Akad. Nauk SSSR, Inst. Nauki i Tekhniki, Moscow 1979.

\bibitem{kun84} Kunita H.: Stochastic differential equations and stochastic flows of diffeomorphisms. \textit{ Ecole d'\'et\'e de Probabilit\'e de Saint-Flour XII -1982} (PL Hennequin, ed), Lecture Notes in Mathematics, vol.1097, Springer-Verlag, Berlin Heidelberg New York Tokyo, 1984, 143-303.

\bibitem{K94a} Kunita, H. (1994) : Stochastic Flows Acting on Schwartz Distributions. \textit{Journal of Theoretical Probability}, Vol. 7, 247-278.

\bibitem{K94b} Kunita, H. (1994) : Generalized Solutions of a Stochastic Partial Differential Equation.
\textit{Journal of Theoretical Probability},  Vol.7, 279-308.

\bibitem{Kunitabook} Kunita, H. (1990) : Stochastic Flows and Stochastic Differential Equations. \textit{Cambridge University Press}.

\bibitem {lot} Lototsky S., Mikulevicius R., Rozovskii B. L. (1997): Nonlinear filtering revisited: a spectral approach. \textit{SIAM J. Control Optim.}, Vol. 35 (2): 435-461.

\bibitem{lot1} Lototsky S. (2004): Optimal filtering of stochastic parabolic equations. \textit{in recent developments stochastic analysis and related topics}, pages 330-353. World Sci. Publ., Hackensack, NJ.

\bibitem{Milstein}  Milstein G.N. and Tretyakov M.V.(2009)  :  Solving Parabolic Stochastic Partial Differential
Equations via Averaging Over Characteristics. \textit{Mathematics of computation}, Vol. 78,  2075-2106.


\bibitem{MS02}  Matoussi A. and Scheutzow, M.  (2002)  :  Semilinear
Stochastic PDE's with nonlinear noise and Backward Doubly SDE's. \textit{J. of Theoret. Probab.}, Vol. 15,  1-39.

\bibitem{nua} Nualart D. (2006): The Malliavin calculus and related topics. \textit{Springer-Verlag Berlin Heidelberg}.

\bibitem{NP88} Nualart, D. and E. Pardoux (1988) : {Stochastic calculus with anticipating integrands}. \textit{Prob. Theory Relat. Fields}, Vol. 78, 535-581.

\bibitem{pardoux} Pardoux E. (1979): Stochastic Partial Differential Equations and Filtering of Diffusion Processes.
 \textit{Stochastics}, Vol. 3, 127-167.

\bibitem{par92} Pardoux E., Peng S. (1992): Backward Stochastic Differential Equations and Quasilinear Parabolic Partial Differential Equations. \textit{Lect. Notes Inf. Sci.}, Vol. 176, 200-217.

\bibitem{par94} Pardoux E., Peng S. (1994): Backward doubly stochastic differential equations and systems
 of quasilinear SPDE's. \textit{Probab. Theory Relat. Fields}, Vol. 98, 209-227.

\bibitem{Picard} Picard J. (1984): Approximation of nonlinear filtering problems and order of convergence. Lecture Notes in  \textit{Control and Inf. Sci.} 61, Springer (1984) 219-236.


 \bibitem{Rochner}   Prevot, C.,  Rochner, M. (2007). A concise course on stochastic partial differential equations. \textit{Lecture Notes in Mathematics} 1905, Springer, Berlin.

 \bibitem{Walsh} Walsh J. B. (2005): Finite Elements Methods for Parabolic Stochastic PDE's. \textit{Potential Analysis},  Vol. 23, 1-43.

 \bibitem{zha} Zhang J. (2004): A numerical scheme for BSDE's. \textit{The Annals of Applied Probability}, Vol. 14; No. 1, 459-488.





\end{thebibliography}
\end{document}